\documentclass[11pt]{article}
\usepackage{amssymb, amsmath, amsthm, mathrsfs, ascmac, color}
\usepackage[pdftex]{graphicx} 
\usepackage[subrefformat=parens]{subcaption}
\usepackage[top=3cm, bottom=3cm, left=3cm,right=3cm]{geometry}
\usepackage{comment}
\usepackage{multirow}
\theoremstyle{plain}
\newtheorem{thm}{Theorem}[section]
\newtheorem{prop}[thm]{Proposition}
\newtheorem{lem}{Lemma}

\newtheorem{rmk}{Remark}
\makeatletter

\@addtoreset{equation}{section}
\makeatother
\newcommand{\ind}{\bs 1}
\newcommand{\dd}{\mathrm d}
\newcommand{\pd}{\partial}
\newcommand{\ee}{\mathrm e}
\newcommand{\EE}{\mathbb E}
\newcommand{\PP}{\mathbf{P}}
\newcommand{\OO}{\mathcal O}
\newcommand{\oo}{{\scriptsize\text{$\mathcal O$}}}

\newcommand{\VV}{\mathrm{Var}}
\newcommand{\cov}{\mathrm{Cov}}
\newcommand{\diag}{\mathrm{diag}}

\newcommand{\pto}{\stackrel{\PP}{\to}}
\newcommand{\dto}{\stackrel{d}{\to}}
\newcommand{\TT}{\mathsf T}
\newcommand{\bs}{\boldsymbol}
\newcommand{\mf}{\mathfrak}
\newcommand{\lb}{\langle}
\newcommand{\rb}{\rangle}
\newcommand{\tand}{\widetilde \land}

\allowdisplaybreaks
\title{\textbf{Small dispersion asymptotics for an SPDE 
in two space dimensions using triple increments
}}
\date{}

\author{\textbf{Yozo Tonaki}\thanks{Graduate School of Engineering Science, Osaka University, Toyonaka, Japan}
\thanks{Center for Mathematical Modeling and Data Science (MMDS), Osaka University, Toyonaka, Japan} 
\footnote{e-mail: \texttt{y.tonaki.es@osaka-u.ac.jp}}
\and \textbf{Yusuke Kaino}\thanks{Graduate School of Maritime Sciences, Kobe University, Kobe, Japan}
\and \textbf{Masayuki Uchida}$^{* \dag}$\thanks{CREST, Japan Science and Technology Agency, Tokyo, Japan}
}

\begin{document}
\bibliographystyle{plain}
\maketitle

\begin{abstract}
We consider parametric estimation 
for a second order linear parabolic stochastic partial differential equation (SPDE)
in two space dimensions driven by a $Q$-Wiener process with a small noise
based on high frequency spatio-temporal data.
We first provide estimators of the diffusive and advective parameters 
in the SPDE using temporal and spatial increments.
We then construct an estimator of the reaction parameter in the SPDE
based on an approximate coordinate process.
We also give simulation results of the proposed estimators.

\begin{center}
\textbf{Keywords and phrases}
\end{center}
High frequency spatio-temporal data,
linear parabolic stochastic partial differential equations,
parametric estimation, 
reaction parameter,
small noise.
\end{abstract}

\section{Introduction}\label{sec1}

Stochastic partial differential equations (SPDEs) 
enable mathematical modeling of various spatio-temporal phenomena with 
uncertainty found in the real world.
Interest in SPDEs is growing in applied sciences 
as they can provide more realistic descriptions of phenomena.
For example, SPDEs have been applied in various fields, such as modeling of
sea surface temperature fluctuations \cite{Piterbarg_Ostrovskii1997}, 
membrane potentials of neurons \cite{Tuckwell2013}, 
and concentration variations of air pollutants \cite{Kallianpur_Xiong1995}.
Such growing interest in SPDEs extends to statistical inference of SPDE models.

We study parametric estimation for the linear parabolic SPDE
\begin{align}
\dd X_t(y,z)
&=\biggl\{
\theta_2
\biggl(\frac{\pd^2}{\pd y^2}+\frac{\pd^2}{\pd z^2}\biggr)
+\theta_1\frac{\pd}{\pd y} 
+\eta_1\frac{\pd}{\pd z} 
+\theta_0 
\biggr\} X_t(y,z) \dd t
\nonumber
\\
&\qquad
+\epsilon \dd W_t^{Q}(y,z),
\quad (t,y,z) \in [0,1] \times D
\label{2d_spde}
\end{align}
with a deterministic initial value $X_0 \in L^2(D)$ 
and the Dirichlet boundary condition
$X_t(y,z) = 0$, $(t,y,z) \in [0,1] \times \pd D$,
where $D = (0,1)^2$, $W_t^{Q}$ is a $Q$-Wiener process in a Sobolev space on $D$,
$\theta=(\theta_0, \theta_1, \eta_1, \theta_2) \in \mathbb R^3 \times (0,\infty)$ 
is an unknown parameter, the parameter space $\Theta$ is a compact convex subset of 
$\mathbb R^3 \times (0,\infty)$ and
$\theta^*=(\theta_0^*, \theta_1^*, \eta_1^*, \theta_2^*) \in \mathrm{Int}\,(\Theta)$ 
is the true value of $\theta$. 
In addition, let $\epsilon \in (0,1)$ be a known small dispersion parameter.

Statistical inference for SPDEs based on discrete observations 
has been developed in recent years, see 
Markussen \cite{Markussen2003},
Bibinger and Trabs \cite{Bibinger_Trabs2020},
Bibinger and Bossert \cite{Bibinger_Bossert2023},
Cialenco and Huang \cite{Cialenco_Huang2020},
Hildebrandt and Trabs \cite{Hildebrandt_Trabs2021, Hildebrandt_Trabs2023},
Tonaki et al.\,\cite{TKU2023,TKU2024b}
and Bossert \cite{Bossert2023arXiv}.
Cialenco \cite{Cialenco2018} provides an overview of existing theories of 
statistical inference for parabolic SPDEs.
The literature on statistics for SPDEs is also listed in  \cite{Altmeyer_Web}.
In general, we cannot construct a consistent estimator of the reaction parameter
in second order parabolic SPDEs driven by a $Q$-Wiener process without $\theta_0$
as long as we consider infill asymptotics in
a bounded domain, finite temporal interval.
Therefore, parametric estimation of the reaction term
in second order parabolic SPDEs
under various asymptotics has been studied.
See, for example, 
Kaino and Uchida \cite{Kaino_Uchida2020, Kaino_Uchida2021},
Tonaki et al.\,\cite{TKU2024a, TKU2024arXiv1} and
Gaudlitz and Reiss \cite{Gaudlitz_Reiss2023}.

Tonaki et al.\,\cite{TKU2024a} considered parametric estimation of $\theta_0$ 
in SPDE \eqref{2d_spde} under $\epsilon \to 0$ 
and found through numerical simulations that the estimation of $\theta_0$ 
is strongly affected by the bias of the estimator of $\theta_2$. 
For this reason, we need to improve the estimation accuracy of 
$\theta_1$, $\eta_1$ and $\theta_2$.
In this paper, we thus construct estimators of $\theta_1$, $\eta_1$ and $\theta_2$
under $\epsilon \to 0$ based on the spatio-temporal increments 
proposed by Tonaki et al.\,\cite{TKU2024b}, and estimate $\theta_0$.
The aim of this paper is
to show that our estimators are superior to those of \cite{TKU2024a} 
in the sense that the estimation accuracy of $\theta_1$, $\eta_1$ and $\theta_2$ 
is improved, 
and consequently, 
we can construct a consistent estimator of $\theta_0$ under weaker conditions
than \cite{TKU2024a}. 
We also verify this fact through numerical simulations.
As a side result, we show that the restriction of the damping parameter $\alpha$ 
in the $Q$-Wiener process \eqref{QW2} below 
in order to construct consistent estimators of 
the parameters is $0< \alpha < 3$.

This paper is organized as follows.
Section \ref{sec2} provides the main results.
We first construct estimators of the coefficient parameters 
$\theta_2$, $\theta_1$ and $\eta_1$ in SPDE \eqref{2d_spde} 
in line with the approach of \cite{TKU2024b}.
We then propose an estimator of $\theta_0$ based on 
the estimators of $\theta_2$, $\theta_1$ and $\eta_1$ 
and an approximate process of the coordinate process of SPDE \eqref{2d_spde}.
In Section \ref{sec3}, we perform numerical simulations of the proposed estimators 
and compare our estimators with those in \cite{TKU2024a}.
In Section \ref{sec4}, we give the proofs of our results.

\section{Main results}\label{sec2}
\subsection{Setting and notation}
We set the following notation.
\begin{enumerate}
\item
We define an operator
\begin{equation*}
-A_\theta = 
\theta_2\biggl(\frac{\pd^2}{\pd y^2} + \frac{\pd^2}{\pd z^2} \biggr)
+ \theta_1\frac{\pd}{\pd y} + \eta_1\frac{\pd}{\pd z} + \theta_0.
\end{equation*}
We then obtain the following representation of SPDE \eqref{2d_spde}.
\begin{equation*}
\dd X_t = -A_\theta X_t \dd t + \epsilon \dd W_t^{Q}.
\end{equation*}

\item
\textit{Eigenpairs $\{ \lambda_{l_1,l_2}, e_{l_1,l_2}\}_{l_1,l_2 \ge 1}$.}
The operator $A_\theta$ has the eigenpairs 
$\{\lambda_{l_1,l_2}, e_{l_1,l_2}\}_{l_1,l_2 \ge 1}$ with 
\begin{equation*}
e_{ l_1, l_2}(y,z)=e_{ l_1}^{(1)}(y)e_{ l_2}^{(2)}(z),
\quad
\lambda_{ l_1, l_2}=\theta_2(\pi^2( l_1^2+ l_2^2)+\Gamma),
\end{equation*}
for $ l_1, l_2 \ge 1$ and $y,z\in [0,1]$, where
\begin{equation*}
e_{ l_1}^{(1)}(y)
=\sqrt 2 \sin(\pi l_1 y) \ee^{-\kappa y/2},
\quad
e_{ l_2}^{(2)}(z)
=\sqrt 2 \sin(\pi l_2 z) \ee^{-\eta z/2},
\end{equation*}
\begin{equation*}
\kappa=\frac{\theta_1}{\theta_2}, 
\quad
\eta=\frac{\eta_1}{\theta_2},
\quad
\Gamma=-\frac{\theta_0}{\theta_2} +\frac{\kappa^2+\eta^2}{4}.
\end{equation*}
We assume $\lambda_{1,1}^* > 0$ 
so that $A_\theta$ is a positive-definite, self-adjoint operator.

\item
\textit{Inner product $\langle \cdot , \cdot \rangle$.}
The eigenfunctions $\{e_{ l_1, l_2}\}_{ l_1, l_2\ge1}$ 
are orthonormal with respect to the weighted $L^2$-inner product
\begin{equation*}
\langle u, v \rangle
= \int_0^1 \int_0^1 u(y,z)v(y,z)\ee^{\kappa y +\eta z} \dd y \dd z, 
\quad 
\| u \| = \sqrt{\langle u, u \rangle}.
\end{equation*}
for $u, v \in L^2(D)$.

\item
\textit{$Q$-Wiener process.}
Let $\mu_0 \in (-2\pi^2, \infty)$ be a known or an unknown parameter.
The parameter space of $\mu_0$ is a compact convex subset of $(-2\pi^2, \infty)$ 
and the true value $\mu_0^*$ belongs to its interior. 
The $Q$-Wiener process in SPDE \eqref{2d_spde} is given by
\begin{equation}\label{QW2}
W_t^{Q} = \sum_{ l_1, l_2\ge1} \mu_{ l_1, l_2}^{-\alpha/2} 
e_{ l_1, l_2} w_{ l_1, l_2}(t),
\quad t \ge 0
\end{equation}
with $\alpha \in (0,3)$, $\mu_{l_1, l_2} = \pi^2( l_1^2 + l_2^2) + \mu_0$ 
and $\{w_{ l_1, l_2}\}_{ l_1, l_2 \ge 1}$ are independent $\mathbb R$-valued standard Brownian motions. 
In this paper, we assume that $\alpha$ is known. 
See \cite{Bossert2023arXiv} and \cite{TKU2024arXiv2} for estimation of $\alpha$.
We here omit the discussion of statistical inference for SPDE \eqref{2d_spde} 
driven by the $Q_1$-Wiener process in \cite{TKU2024a}
since it can be discussed in the same way.

\item
SPDE \eqref{2d_spde} has a unique mild solution which is given by
\begin{equation*}
X_t=\ee^{-t A_\theta}X_0 +\epsilon \int_0^t \ee^{-(t-s)A_\theta}\dd W_s^{Q},
\end{equation*}
where $\ee^{-t A_\theta} u 
= \sum_{ l_1, l_2 \ge 1} \ee^{-\lambda_{ l_1, l_2}t}
\langle u, e_{ l_1, l_2}\rangle e_{ l_1, l_2}$ for $u \in L^2(D)$.
$X_t$ is then decomposed as follows.

\begin{equation*}
X_t(y,z)=\sum_{ l_1, l_2\ge1} 
x_{ l_1, l_2}(t)
e_{ l_1}^{(1)}(y)e_{ l_2}^{(2)}(z),
\quad t\ge0,\ y,z\in[0,1]
\end{equation*}
with the coordinate process
\begin{equation}\label{cor-pro}
x_{ l_1, l_2}(t) = \langle X_t, e_{ l_1, l_2} \rangle =
\ee^{-\lambda_{ l_1, l_2} t} \langle X_0, e_{ l_1, l_2} \rangle 
+\epsilon \mu_{ l_1, l_2}^{-\alpha/2} 
\int_0^t \ee^{-\lambda_{ l_1, l_2}(t-s)} \dd w_{ l_1, l_2}(s).
\end{equation}
Note that 
$\{ x_{l_1,l_2} \}_{ l_1, l_2 \ge 1}$ are 
one-dimensional independent processes satisfying the Ornstein-Uhlenbeck dynamics
\begin{equation}\label{OU}
\dd x_{ l_1, l_2}(t) =
-\lambda_{ l_1, l_2} x_{ l_1, l_2}(t)\dd t
+\epsilon \mu_{ l_1, l_2}^{-\alpha/2} \dd w_{ l_1, l_2}(t),
\quad
x_{ l_1, l_2}(0) = \langle X_0, e_{ l_1, l_2} \rangle.
\end{equation}

\item
\textit{Observations $\mathbb X_{M,N}$.}
Suppose that we have the discrete observations
$\mathbb X_{M,N} = \{X_{t_i}(y_j,z_k) \}$ with
$t_i = i\Delta = i/N$, $y_j = j/M_1$, $z_k = k/M_2$
for $i=0,\ldots,N$, $j=0,\ldots,M_1$ and $k=0,\ldots,M_2$.
\begin{enumerate}
\item
Fix $b \in (0,1/2)$.
Let $\mathbb X_{m,N}^{(1)} = \{X_{t_i}(\widetilde y_j,\widetilde z_k) \}$
be a thinned data of $\mathbb X_{M,N}$ such that
\begin{equation*}
b \le \widetilde y_0 < \widetilde y_1 < \cdots < \widetilde y_{m_1} \le 1-b,
\quad
b \le \widetilde z_0 < \widetilde z_1 < \cdots < \widetilde z_{m_2} \le 1-b,
\end{equation*}
$m=m_1 m_2$, $m=\OO(N)$, $N=\OO(m)$. 
For simplicity, 
we set $\widetilde y_0 = \widetilde z_0 = b$, 
$\widetilde y_{m_1} = \widetilde z_{m_2} = 1-b$ and $m_1=m_2$,
that is, 
$\widetilde y_j = b + j\delta$ and $\widetilde z_k = b + k\delta$,
where $\delta = \frac{1-2b}{m_1} = \frac{1-2b}{\sqrt{m}}$.
We also write 
\begin{equation*}
\overline y_j = \frac{\widetilde y_{j-1}+\widetilde y_j}{2},
\quad
\overline z_k = \frac{\widetilde z_{k-1}+\widetilde z_k}{2},
\quad
j=1,\ldots, m_1, k=1,\ldots, m_2.
\end{equation*}

\item
Let $\mathbb X_{M,n}^{(2)} = \{X_{\widetilde t_i}(y_j,z_k) \}$ 
be a thinned data of $\mathbb X_{M,N}$ such that
\begin{equation*}
\widetilde t_i = i \Delta_n = \biggl\lfloor \frac{N}{n} \biggr\rfloor \frac{i}{N},
\quad i = 0,\ldots,n \ (\le N).
\end{equation*}
\end{enumerate}

\item
For a sequence $\{a_n\}$, 
we write $a_n \equiv a$ if $a_n = a$ for some $a \in \mathbb R$ and all $n$.

\item
For $h \in (0,1)$, $a, b >0$ and $c \in \mathbb R$, we define
\begin{equation*}
h^{a \tand b} =  
\begin{cases}
h^{a}, & a < b,
\\
-h^{b} \log h, & a = b,
\\
h^{b}, & a > b,
\end{cases}
\end{equation*}
and $h^{c+ a \tand b} = h^c \cdot h^{a \tand b}$,
$h^{c(a \tand b)} = (h^c)^{a \tand b}$.
Furthermore, for $L \in (1,\infty)$, we write
\begin{equation*}
L^{a \tand b} = \frac{1}{(1/L)^{a \tand b}} =
\begin{cases}
L^{a}, & a < b,
\\
L^{b} /\log L, & a = b,
\\
L^{b}, & a > b.
\end{cases}
\end{equation*}

\end{enumerate}

\subsection{Estimation of the diffusive and advective parameters}\label{sec2-1}
We first consider the estimation of the diffusive and advective parameters 
$\theta_2$, $\theta_1$ and  $\eta_1$ based on the thinned data $\mathbb X_{m,N}^{(1)}$.
Tonaki et al.\,\cite{TKU2024a} utilized a contrast function based on temporal increments
\begin{equation*}
\Delta_i X = X_{t_{i}}-X_{t_{i-1}}
\end{equation*}
and constructed minimum contrast estimators of 
$\theta_2$, $\theta_1$ and $\eta_1$.
In this subsection, we construct a contrast function using 
triple increments 
\begin{align*}
T_{i,j,k}X &= 
X_{t_{i}}(\widetilde y_{j},\widetilde z_{k})
- X_{t_{i}}(\widetilde y_{j-1},\widetilde z_{k})
- \bigl( X_{t_{i}}(\widetilde y_{j},\widetilde z_{k-1})
- X_{t_{i}}(\widetilde y_{j-1},\widetilde z_{k-1}) \bigr)
\\
&\quad- \Bigl( X_{t_{i-1}}(\widetilde y_{j},\widetilde z_{k})
- X_{t_{i-1}}(\widetilde y_{j-1},\widetilde z_{k})
- \bigl( X_{t_{i-1}}(\widetilde y_{j},\widetilde z_{k-1})
- X_{t_{i-1}}(\widetilde y_{j-1},\widetilde z_{k-1}) \bigr) \Bigr).
\end{align*}

Let $J_0$ be the Bessel function of the first kind of order $0$, that is,
\begin{equation*}
J_0(x) = 1 + \sum_{k\ge1} \frac{(-1)^k}{(k!)^2} \biggl(\frac{x}{2}\biggr)^{2k}.
\end{equation*}
For $r, \alpha >0$, we define
\begin{equation}\label{psi}
\phi_{r,\alpha}(\theta_2)
=\frac{2}{\theta_2^{1-\alpha} \pi}
\int_0^\infty 
\frac{1-\ee^{-x^2}}{x^{1+2\alpha}}
\biggl(
J_0\Bigl(\frac{\sqrt{2}r x}{\sqrt{\theta_2}}\Bigr)
-2J_0\Bigl(\frac{r x}{\sqrt{\theta_2}}\Bigr)+1
\biggr) \dd x.
\end{equation}

Let $\alpha_0 \in (0,3)$. We make the following conditions 
of the deterministic initial value $X_0$.
\begin{description}
\item[{[A1]$_{\alpha_0}$}]
$\| A_\theta^{(1+\alpha_0)/2} X_0 \| < \infty$.

\item[{[A2]}]
$X_0 \neq 0$, that is, there exist $\mf l_1, \mf l_2 \ge 1$ such that 
$\lb X_0, e_{\mf l_1,\mf l_2} \rb \neq 0$.
\end{description}
For $\alpha \in (0,3)$, Proposition \ref{prop1} below yields 
\begin{equation}\label{eq-prop1-1}
\frac{1}{\epsilon^2 N \Delta^\alpha}\sum_{i=1}^N \EE[(T_{i,j,k}X)^2]
= \ee^{-\kappa \overline y_j -\eta \overline z_k} \phi_{r,\alpha}(\theta_2)
+ \OO \bigl( \Delta^{1-\alpha+ \alpha \tand 2} 
\lor \epsilon^{-2}\Delta^{1+\alpha_0-\alpha} \bigr).
\end{equation}
In order to asymptotically ignore the remainder in \eqref{eq-prop1-1}, 
we assume the following balance condition of $N$ and $\epsilon$. 
\begin{description}
\item[{[B1]$_{\alpha, \alpha_0}$}]
$\epsilon^2 N^{1+\alpha_0 -\alpha} \to \infty$ as $N \to \infty$ and $\epsilon \to 0$. 
\end{description}
We then consider the contrast function 
\begin{equation*}
\mathcal U_{m,N}^{\epsilon}(\kappa, \eta, \theta_2) = 
\frac{1}{m}\sum_{k=1}^{m_2}\sum_{j=1}^{m_1} 
\Biggl\{
\frac{1}{\epsilon^2 N \Delta^{\alpha}}\sum_{i=1}^{N} (T_{i,j,k}X)^2
-\ee^{-\kappa \overline y_j -\eta \overline z_k} \phi_{r,\alpha}(\theta_2)
\Biggr\}^2.
\end{equation*}
Let $\Xi$ be a compact convex subset of 
$\mathbb R^2 \times (0,\infty)$.
We define the estimators of the coefficient parameters 
$\theta_1$, $\eta_1$ and $\theta_2$ by
\begin{equation*}
(\widehat \kappa, \widehat \eta, \widehat \theta_2) 
= \underset{(\kappa, \eta, \theta_2) \in \Xi}{\mathrm{argmin}}\,
\mathcal U_{m,N}^{\epsilon}(\kappa, \eta, \theta_2),
\end{equation*}
\begin{equation*}
\widehat \theta_1 = \widehat \kappa \widehat \theta_2,
\quad 
\widehat \eta_1 = \widehat \eta \widehat \theta_2.
\end{equation*}
Let
\begin{equation*}
\mathcal R_{\alpha,\alpha_0} = \mathcal R_{\alpha,\alpha_0}(m, N, \epsilon) 
= \frac{\sqrt{m N}}{\Delta^{\alpha \tand 2 -\alpha}
\lor \epsilon^{-2}\Delta^{\alpha_0 -\alpha}}
= \sqrt{m N} (N^{\alpha \tand 2 -\alpha}
\land \epsilon^2 N^{\alpha_0 -\alpha}).
\end{equation*}
We obtain the following result.
\begin{thm}\label{th1}
Let $\alpha, \alpha_0 \in (0,3)$ and $\delta/\sqrt{\Delta} \equiv r \in(0,\infty)$.
Assume that [A1]$_{\alpha_0}$ and [B1]$_{\alpha, \alpha_0}$ hold.
Then, it holds that
\begin{equation*}
\mathcal R_{\alpha,\alpha_0}
\begin{pmatrix}
\widehat \theta_1 -\theta_1^*
\\
\widehat \eta_1 -\eta_1^* 
\\
\widehat \theta_2 -\theta_2^*
\end{pmatrix}
= \OO_\PP(1)
\end{equation*}
as $m \to \infty$, $N \to \infty$ and $\epsilon \to 0$.
\end{thm}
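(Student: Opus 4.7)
The plan is to run the standard M-estimator argument for the minimum-contrast estimator $\widehat\vartheta=(\widehat\kappa,\widehat\eta,\widehat\theta_2)$: first (i) uniform convergence of $\mathcal U_{m,N}^\epsilon$ on $\Xi$ to a limit uniquely minimized at $\vartheta^*$, yielding consistency; then (ii) a Taylor expansion of the first-order condition to pin down the rate. The conclusion for $\widehat\theta_1=\widehat\kappa\widehat\theta_2$ and $\widehat\eta_1=\widehat\eta\widehat\theta_2$ then follows by the delta method. Throughout, set $Z_{j,k}=\frac{1}{\epsilon^2 N\Delta^\alpha}\sum_{i=1}^N(T_{i,j,k}X)^2$ and $f(y,z;\vartheta)=\ee^{-\kappa y-\eta z}\phi_{r,\alpha}(\theta_2)$, so that $\mathcal U_{m,N}^\epsilon(\vartheta)=m^{-1}\sum_{j,k}(Z_{j,k}-f(\overline y_j,\overline z_k;\vartheta))^2$.

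First I would prove uniform convergence in probability of $\mathcal U_{m,N}^\epsilon$ on $\Xi$ to
\begin{equation*}
\mathcal U(\vartheta)=\int_b^{1-b}\!\int_b^{1-b}\bigl(f(y,z;\vartheta^*)-f(y,z;\vartheta)\bigr)^2\dd y\dd z.
\end{equation*}
The mean $\EE Z_{j,k}$ is handled by \eqref{eq-prop1-1}, whose remainder vanishes asymptotically under [B1]$_{\alpha,\alpha_0}$. The fluctuation $Z_{j,k}-\EE Z_{j,k}$ is controlled by a Gaussian variance estimate: using the mode representation \eqref{cor-pro} and Isserlis' formula, $\cov((T_{i,j,k}X)^2,(T_{i',j',k'}X)^2)=2\cov(T_{i,j,k}X,T_{i',j',k'}X)^2$; combined with the short-range temporal correlation of the OU modes and summation against the smooth weights over $(j,k)$, this yields a $\OO(1/(mN))$-order variance for the centred averages. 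A Riemann-sum approximation converts the double average over the midpoints $(\overline y_j,\overline z_k)$ to the double integral. The limit $\mathcal U$ has its unique minimum at $\vartheta^*$ because the family $\{\ee^{-\kappa y-\eta z}\}$ is linearly independent for distinct $(\kappa,\eta)$ and $\theta_2\mapsto\phi_{r,\alpha}(\theta_2)$ is strictly monotone.

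For the rate I would Taylor-expand the first-order condition $\nabla_\vartheta\mathcal U_{m,N}^\epsilon(\widehat\vartheta)=0$ around $\vartheta^*$ to get
\begin{equation*}
\widehat\vartheta-\vartheta^*=-\bigl(\nabla^2\mathcal U_{m,N}^\epsilon(\widetilde\vartheta)\bigr)^{-1}\nabla\mathcal U_{m,N}^\epsilon(\vartheta^*),
\end{equation*}
with $\widetilde\vartheta$ on the segment from $\vartheta^*$ to $\widehat\vartheta$ and the Hessian converging in probability to the positive-definite $\nabla^2\mathcal U(\vartheta^*)$ by (i). Splitting the score as
\begin{align*}
\nabla\mathcal U_{m,N}^\epsilon(\vartheta^*)
&=-\frac{2}{m}\sum_{j,k}(Z_{j,k}-\EE Z_{j,k})\nabla_\vartheta f(\overline y_j,\overline z_k;\vartheta^*)\\
&\quad-\frac{2}{m}\sum_{j,k}(\EE Z_{j,k}-f(\overline y_j,\overline z_k;\vartheta^*))\nabla_\vartheta f(\overline y_j,\overline z_k;\vartheta^*),
\end{align*}
the bias term is $\OO(\Delta^{1-\alpha+\alpha\tand 2}\lor\epsilon^{-2}\Delta^{1+\alpha_0-\alpha})$ by \eqref{eq-prop1-1}, and the centred term is $\OO_\PP(1/\sqrt{mN})$ by the variance bound. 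Using $m=\OO(N)$, $N=\OO(m)$ and $\Delta=1/N$, both $\Delta^{1+\alpha\tand 2-\alpha}\lor\epsilon^{-2}\Delta^{1+\alpha_0-\alpha}$ and $1/\sqrt{mN}$ are $\OO(\mathcal R_{\alpha,\alpha_0}^{-1})$, giving the claim.

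The principal technical hurdle is the covariance/variance bound for the Gaussian quadratic statistic $Z_{j,k}$, uniform in $(j,k)$ and summable against the smooth kernel $\nabla_\vartheta f$. Because \eqref{cor-pro} aggregates countably many OU modes with eigenvalues $\lambda_{l_1,l_2}\asymp l_1^2+l_2^2$ and the triple increment induces the Bessel-kernel integral $\phi_{r,\alpha}$, the necessary mode summations converge only when $\alpha<3$, and the deterministic-initial contribution has to be absorbed using [A1]$_{\alpha_0}$. Matching these decay rates exactly with the balance condition [B1]$_{\alpha,\alpha_0}$ is where most of the bookkeeping lies.
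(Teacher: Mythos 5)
Your overall strategy coincides with the paper's proof: the paper establishes exactly your steps --- uniform convergence of $\mathcal U_{m,N}^{\epsilon}$ to a limit uniquely minimized at $\vartheta^*$ (using Lemma \ref{lem11} for injectivity of $\theta_2 \mapsto \phi_{r,\alpha}(\theta_2)$), a Taylor expansion of the first-order condition, $\OO_\PP(1)$-boundedness of $\mathcal R_{\alpha,\alpha_0}\,\pd_\vartheta \mathcal U_{m,N}^{\epsilon}(\vartheta^*)$, convergence of the Hessian to $2\boldsymbol U(\vartheta^*)$, and positive definiteness of $\boldsymbol U(\vartheta^*)$ --- with Propositions \ref{prop1} and \ref{prop2} supplying the mean and covariance bounds.

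There is, however, one concrete error in your variance step. You invoke Isserlis to claim
$\cov[(T_{i,j,k}X)^2,(T_{i',j',k'}X)^2]=2\cov[T_{i,j,k}X,T_{i',j',k'}X]^2$,
but this identity holds only for \emph{centered} jointly Gaussian variables. Since $X_0$ is a nonzero deterministic initial value, each $T_{i,j,k}X$ carries a deterministic mean coming from the terms $A_{i,l_1,l_2}=-\lb X_0,e_{l_1,l_2}\rb(1-\ee^{-\lambda_{l_1,l_2}\Delta})\ee^{-(i-1)\lambda_{l_1,l_2}\Delta}$, and the correct expansion contains the additional cross term
$4\,\EE[T_{i,j,k}X]\,\EE[T_{i',j',k'}X]\,\cov[T_{i,j,k}X,T_{i',j',k'}X]$.
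This is precisely what Proposition \ref{prop2}-(5),(6) keeps track of, and it is responsible for the factor $\epsilon^{-2}\Delta^{\alpha_0-\alpha \tand 2}\lor 1$ in the variance bound --- i.e.\ for the very shape of $\mathcal R_{\alpha,\alpha_0}$ and the role of [A1]$_{\alpha_0}$ and [B1]$_{\alpha,\alpha_0}$. Your claimed $\OO(1/(mN))$ variance for the centred averages therefore understates the true variance whenever $\epsilon^{-2}\Delta^{\alpha_0-\alpha\tand 2}$ is large; the final conclusion survives because the normalization $\mathcal R_{\alpha,\alpha_0}\le\sqrt{mN}$ absorbs the discrepancy, but as written the covariance identity and the ensuing bound are incorrect and need the mean--fluctuation cross term restored (you do acknowledge the initial-value contribution in your closing remarks, so this is a repairable slip rather than a wrong approach).
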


\begin{rmk}
\begin{enumerate}
\item[(i)]
The estimators $\widehat \kappa$ and $\widehat \eta$ also have
$\mathcal R_{\alpha,\alpha_0}$-consistency
as $m \to \infty$, $N \to \infty$ and $\epsilon \to 0$.

\item[(ii)]
For $\alpha \in (0,1)$, we assume that [A1]$_{1}$ holds, 
which corresponds to the assumption of \cite{TKU2024a}.
Let $\widetilde {\mathcal R}_{\alpha}$ be the rate of Theorem 3.5 in \cite{TKU2024a}.
Since
\begin{align*}
\widetilde {\mathcal R}_{\alpha} &= N^{(1/2) \land (1-\alpha)} 
\land \epsilon N^{5/4 -\alpha} \land \frac{\epsilon^2 N^{2-\alpha}}{\log N},
\\
\mathcal R_{\alpha,1} &= \sqrt{m N} \land \epsilon^2 \sqrt{m N^{3 -2\alpha}}
= \OO(N \land \epsilon^2 N^{2-\alpha}),
\end{align*}
we have $|\widetilde {\mathcal R}_{\alpha}| \leq  C |\mathcal R_{\alpha,1}|$
for some $C>0$.
Therefore, we find that the rate of the estimators of  
$\theta_1$, $\eta_1$ and $\theta_2$ 
is better than that of \cite{TKU2024a}.

\item[(iii)]
The estimators $\widehat \theta_1$, $\widehat \eta_1$ and $\widehat \theta_2$
have the convergence rate $\mathcal R_{\alpha,\alpha_0}^{-1} = (m N)^{-1/2}$ 
under the following condition:
\begin{description}
\item[{[B2]$_{\alpha, \alpha_0}$}]
$
\frac{1}{\epsilon^2 N^{\alpha_0 -\alpha}} =\OO(1)$
\end{description}
with $\alpha \in (0,2)$ and $\alpha_0 \in (\alpha, 3)$.
This means that one needs to take $\alpha \in (0,2)$ as well as \cite{TKU2024b}
in order to construct the estimators $\widehat \theta_1$, $\widehat \eta_1$ and $\widehat \theta_2$
with 
$(m N)^{1/2}$-consistency.

\item[(iv)]
Theorem \ref{th1} suggests that for the SPDE model in \cite{TKU2024b}, 
one can construct consistent estimators of the coefficient parameters 
under [A1]$_\alpha$ for $\alpha \in (0,3)$.
\end{enumerate}
\end{rmk}

\subsection{Estimation of the reaction parameter}\label{sec2-2}
Next, we estimate the reaction parameter $\theta_0$ 
using the thinned data $\mathbb X_{M,n}^{(2)}$. 
We can estimate the drift parameter of diffusion processes 
under small dispersion asymptotics 
based on statistical inference for diffusion processes. 
The coordinate process \eqref{cor-pro} is a diffusion process with a small noise, 
but we cannot observe it.
Thus, we construct an approximate process $\widehat {\textbf x}_{l_1,l_2} 
= \{ \widehat x_{l_1,l_2}(\widetilde t_i) \}_{i=1}^n$
of the coordinate process using the above estimators.

Since 
the coordinate process
\begin{equation*}
x_{l_1, l_2}(t)
=2\int_0^1 \int_0^1 X_t(y,z)\sin(\pi  l_1 y) \sin(\pi  l_2 z)
\ee^{(\kappa y +\eta z)/2} \dd y \dd z
\end{equation*}
by the definition of the inner product $\langle \cdot, \cdot \rangle$, 
we set the approximate coordinate process by
\begin{equation*}
\widehat x_{l_1, l_2}(t)
=2\int_0^1 \int_0^1 \Psi_{M} X_t(y,z)\sin(\pi  l_1 y) \sin(\pi  l_2 z)
\ee^{(\widehat \kappa y +\widehat \eta z)/2} \dd y \dd z,
\end{equation*}
where the operator $\Psi_M$ is defined by $\Psi_M f(y,z) = f(y_{j-1}, z_{k-1})$
for $(y,z) \in [y_{j-1}, y_j) \times [z_{k-1}, z_k)$, 
$j=1,\ldots, M_1$, $k=1,\ldots, M_2$.
A simple calculation yields
\begin{equation*}
\widehat x_{l_1, l_2}(t)
= \sum_{j=1}^{M_1} \sum_{k=1}^{M_2} X_t(y_{j-1}, z_{k-1})
\delta_j^{[y]} g_{l_1}(\widehat \kappa) \delta_k^{[z]} g_{l_2}(\widehat \eta),
\end{equation*}
where
\begin{align*}
g_l(x:a) &= 
\frac{\sqrt{2}\ee^{ax/2}}{(a/2)^2+(\pi l)^2}
\biggl( \frac{a}{2} \sin(\pi l x) - \pi l \cos(\pi l x) \biggr),
\quad a, x \in \mathbb R, \ l \in \mathbb N,
\\
\delta_j^{[y]} g_l(a) &= g_l(y_{j}:a) - g_l(y_{j-1}:a),
\quad
\delta_k^{[z]} g_l(a) = g_l(z_{k}:a) - g_l(z_{k-1}:a).
\end{align*}
Based on \eqref{CF} below, we define the contrast function
\begin{equation*}
\mathcal V_n^{\epsilon} (\lambda,\mu:\widehat {\textbf x}_{l_1,l_2})
=\sum_{i=1}^n
\frac{(\widehat x_{l_1,l_2}(\widetilde t_i)
- \ee^{-\lambda \Delta_n} \widehat x_{l_1,l_2}(\widetilde t_{i-1}))^2}
{\frac{\epsilon^2(1-\ee^{-2\lambda \Delta_n})}{2\lambda \mu^\alpha}}
+n\log \frac{1-\ee^{-2\lambda \Delta_n}}{2\lambda \mu^\alpha \Delta_n}.
\end{equation*}
Fix $\mf l_1, \mf l_2 \ge 1$ in [A2].
If $\mu_0$ is known, then we set $\mu_0 = \mu_0^*$ and define
\begin{equation*}
\widehat \lambda_{\mf l_1,\mf l_2}
=\underset{\lambda}{\mathrm{arginf}}\, 
\mathcal V_n^{\epsilon} (\lambda,\mu_{\mf l_1,\mf l_2}^*: 
\widehat {\textbf x}_{\mf l_1,\mf l_2})
\end{equation*}
as the estimator of $\lambda_{\mf l_1,\mf l_2}$, 
or if $\mu_0$ is unknown, then we set
\begin{equation*}
(\widehat \lambda_{\mf l_1,\mf l_2}, \widehat \mu_{\mf l_1,\mf l_2})
=\underset{\lambda,\mu}{\mathrm{arginf}}\, 
\mathcal V_n^{\epsilon} (\lambda,\mu: \widehat {\textbf x}_{\mf l_1,\mf l_2})
\end{equation*}
as the estimator of $(\lambda_{\mf l_1,\mf l_2},\mu_{\mf l_1,\mf l_2})$. 
We then define the estimators
\begin{equation*}
\widehat \theta_0 = \widehat \theta_{0,\mf l_1, \mf l_2}
=-\widehat \lambda_{\mf l_1,\mf l_2}
+\widehat \theta_2 \biggl( \frac{\widehat \kappa^2 + \widehat \eta^2}{4} 
+\pi^2(\mf l_1^2 +\mf l_2^2) \biggr),
\quad
\widehat \mu_0 = \widehat \mu_{0,\mf l_1, \mf l_2} 
= \widehat \mu_{\mf l_1,\mf l_2}- \pi^2(\mf l_1^2 +\mf l_2^2).
\end{equation*}
We also define 
\begin{equation*}
G(\lambda,\mu,x) = 
\frac{1 -\ee^{-2\lambda}}{2\lambda} \mu^\alpha x^2,
\quad 
H(\mu) = \frac{\alpha^2}{2\mu^2},
\quad
I(\lambda, \mu, x) = \diag \{ G(\lambda,\mu,x), H(\mu) \}
\end{equation*}
and
\begin{equation*}
\mathcal G_{\mf l_1, \mf l_2} = G(\lambda_{\mf l_1, \mf l_2}^*,
\mu_{\mf l_1, \mf l_2}^*,x_{\mf l_1, \mf l_2}(0))^{-1},
\quad
\mathcal I_{\mf l_1, \mf l_2} = I(\lambda_{\mf l_1, \mf l_2}^*,
\mu_{\mf l_1, \mf l_2}^*,x_{\mf l_1, \mf l_2}(0))^{-1}.
\end{equation*}
In order to control the asymptotic behavior of the approximate process 
$\widehat {\textbf x}_{l_1,l_2}$, we consider the following conditions. 
\begin{description}
\item[{[C1]$_{\alpha, \alpha_0}$}]
$\frac{n^2}{(M_1 \land M_2)^{2(\alpha_0 \tand 1)}} \to 0$, 
$\frac{n^2 \epsilon^2}{(M_1 \land M_2)^{2(\alpha \tand 1)}} \to 0$,
$\frac{n^{2 -\alpha_0 \tand 2} \lor n^{2-\alpha \tand 1} \epsilon^2}
{\mathcal R_{\alpha,\alpha_0}^2} \to 0$.

\item[{[C2]$_{\alpha, \alpha_0}$}]
$\frac{n^3 \epsilon^2}{(M_1 \land M_2)^{2(\alpha_0 \tand 1)}} \to 0$, 
$\frac{n^3 \epsilon^4}{(M_1 \land M_2)^{2(\alpha \tand 1)}} \to 0$,
$\frac{n^{3 -\alpha_0 \tand 2} \epsilon^2
\lor n^{3-\alpha \tand 1} \epsilon^4}{\mathcal R_{\alpha,\alpha_0}^2} \to 0$.

\item[{[C3]$_{\alpha, \alpha_0}$}]
$\frac{\epsilon^{-4}}{(M_1 \land M_2)^{2(\alpha_0 \tand 1)}} \to 0$, 
$\frac{\epsilon^{-2}}{(M_1 \land M_2)^{2 (\alpha \tand 1)}} \to 0$,
$\frac{ n^{-\alpha_0 \tand 2} \epsilon^{-4}
\lor n^{-\alpha \tand 1} \epsilon^{-2}}{\mathcal R_{\alpha,\alpha_0}^2} \to 0$.

\item[{[C4]$_{\alpha, \alpha_0}$}]
$\frac{n^2 \epsilon^{-2}}{(M_1 \land M_2)^{2(\alpha_0 \tand 1)}} \to 0$, 
$\frac{n^2}{(M_1 \land M_2)^{2 (\alpha \tand 1)}} \to 0$,
$\frac{ n^{2-\alpha_0 \tand 2} \epsilon^{-2} \lor n^{2-\alpha \tand 1}}
{\mathcal R_{\alpha,\alpha_0}^2} \to 0$.

\item[{[C5]$_{\alpha, \alpha_0}$}]
$\frac{n \lor \epsilon^{-2}}{(M_1 \land M_2)^{2(\alpha_0 \tand 1)}} \to 0$, 
$\frac{n \epsilon^2}{(M_1 \land M_2)^{2 (\alpha \tand 1)}} \to 0$,
$\frac{n \lor \epsilon^{-2}}{\mathcal R_{\alpha,\alpha_0}^2} \to 0$.
\end{description}
We then obtain the following theorem.
\begin{thm}\label{th2}
Let $\alpha, \alpha_0 \in (0,3)$.
Assume that [A1]$_{\alpha_0}$, [A2] and [B1]$_{\alpha,\alpha_0}$ hold. 
\begin{enumerate}
\item[(1)]
Suppose that 
$\mu_0 $ is known.
As $n \to \infty$ and $\epsilon \to 0$, it holds that
under [C1]$_{\alpha, \alpha_0}$ and [C2]$_{\alpha, \alpha_0}$, 
\begin{equation*}
\widehat \theta_0 \pto \theta_0^*,
\end{equation*}
and that under [C2]$_{\alpha, \alpha_0}$, [C4]$_{\alpha, \alpha_0}$ 
and [C5]$_{\alpha}$, 
\begin{equation*}
\epsilon^{-1}(\widehat \theta_0 - \theta_0^*) 
\dto N (0,\mathcal G_{\mf l_1, \mf l_2}).
\end{equation*}

\item[(2)]
Suppose that $\mu_0$ is unknown.
As $n \to \infty$ and $\epsilon \to 0$, it holds that
under [C2]$_{\alpha, \alpha_0}$ and [C3]$_{\alpha, \alpha_0}$, 
\begin{equation*}
(\widehat \theta_0, \widehat \mu_0) \pto (\theta_0^*, \mu_0^*),
\end{equation*}
and that under [C2]$_{\alpha, \alpha_0}$-[C5]$_{\alpha, \alpha_0}$, 
\begin{equation*}
\begin{pmatrix}
\epsilon^{-1}(\widehat \theta_0 - \theta_0^*)
\\
\sqrt n(\widehat \mu_0-\mu_0^*)
\end{pmatrix}
\dto N (0, \mathcal I_{\mf l_1, \mf l_2}).
\end{equation*}
\end{enumerate}
\end{thm}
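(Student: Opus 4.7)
The plan is to reduce the contrast function $\mathcal V_n^{\epsilon}(\,\cdot\,:\widehat{\textbf x}_{\mf l_1,\mf l_2})$ based on the approximate coordinate process to the ideal contrast $\mathcal V_n^{\epsilon}(\,\cdot\,:\textbf x_{\mf l_1,\mf l_2})$ built from the unobserved coordinate process, and then to apply standard small dispersion asymptotic theory for one-dimensional Ornstein--Uhlenbeck SDEs. Conditions [C1]$_{\alpha,\alpha_0}$--[C5]$_{\alpha,\alpha_0}$ are designed to quantify the negligibility of the two plug-in error sources: (a) the spatial discretization $\Psi_M$ that replaces $X_t$ by a piecewise-constant version, and (b) the plug-in of $(\widehat\kappa,\widehat\eta)$ for $(\kappa^*,\eta^*)$ in the exponential weights of the inner product.

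The first step is to decompose
\[
\widehat x_{\mf l_1,\mf l_2}(\widetilde t_i) - x_{\mf l_1,\mf l_2}(\widetilde t_i) = R_i^{(M)} + R_i^{(\kappa,\eta)},
\]
where $R_i^{(M)}$ isolates the effect of $\Psi_M$ at the true weights and $R_i^{(\kappa,\eta)}$ isolates the effect of plugging in $(\widehat\kappa,\widehat\eta)$. Using the mild formula, the regularity [A1]$_{\alpha_0}$ of $X_0$, and the $\alpha$-summability of the $Q$-Wiener process, Hilbert-space bounds give
\[
\EE\bigl[|R_i^{(M)}|^2\bigr] = \OO\bigl(\epsilon^2(M_1\land M_2)^{-2(\alpha\tand 1)} + (M_1\land M_2)^{-2(\alpha_0\tand 1)}\bigr),
\]
while a first-order Taylor expansion of $g_l(\,\cdot\,:a)$ in $a$ combined with Theorem \ref{th1} yields $|R_i^{(\kappa,\eta)}| = \OO_\PP(\mathcal R_{\alpha,\alpha_0}^{-1})$ uniformly in $i$. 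Analogous bounds on the one-step temporal differences $R_i^{(\cdot)} - R_{i-1}^{(\cdot)}$ are needed because the contrast involves squared increments normalized by $\epsilon^{-2}$; the combinations that arise in the Taylor expansion of $\mathcal V_n^\epsilon$ around $\textbf x_{\mf l_1,\mf l_2}$ are precisely the ratios appearing in [C1]--[C5].

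The second step analyzes the ideal contrast. Since $x_{\mf l_1,\mf l_2}$ is an explicit OU process with small noise $\epsilon\mu_{\mf l_1,\mf l_2}^{-\alpha/2}$, the conditional Gaussian log-likelihood structure produces a score whose drift component concentrates at rate $\epsilon^{-1}$, with [A2] guaranteeing $x_{\mf l_1,\mf l_2}(0)\neq 0$ and hence positive Fisher information for $\lambda$, while the diffusion component concentrates at the standard high-frequency rate $\sqrt n$. Taylor expansion of the score around $(\lambda^*_{\mf l_1,\mf l_2},\mu^*_{\mf l_1,\mf l_2})$ yields the block-diagonal information matrix $\diag(G,H)$ and hence the asymptotic variances $\mathcal G_{\mf l_1,\mf l_2}$ and $\mathcal I_{\mf l_1,\mf l_2}$ by a standard Z-estimator argument. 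Consistency follows from uniform convergence of a suitably rescaled contrast combined with identifiability at the truth.

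Finally, the delta method applied to $\widehat\theta_0 = -\widehat\lambda_{\mf l_1,\mf l_2} + \widehat\theta_2\bigl((\widehat\kappa^2+\widehat\eta^2)/4 + \pi^2(\mf l_1^2+\mf l_2^2)\bigr)$ and $\widehat\mu_0 = \widehat\mu_{\mf l_1,\mf l_2} - \pi^2(\mf l_1^2+\mf l_2^2)$ transfers the limits to $(\widehat\theta_0,\widehat\mu_0)$. The condition $(n\lor\epsilon^{-2})/\mathcal R_{\alpha,\alpha_0}^2 \to 0$ embedded in [C5]$_{\alpha,\alpha_0}$ ensures that the plug-in contribution via $(\widehat\theta_2,\widehat\kappa,\widehat\eta)$ is $o_\PP(\epsilon)$ for $\widehat\theta_0$ and $o_\PP(n^{-1/2})$ for $\widehat\mu_0$, so the limit distributions are unaltered. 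The main obstacle will be the first step: because $R_i^{(\kappa,\eta)}$ is frozen across $i$ while $R_i^{(M)}$ has its own temporal correlation inherited from $X_t$, one must track separately both the $L^2$ size of residuals and the $L^2$ size of their one-step differences, and the precise form of [C1]--[C5] is dictated by this dual bookkeeping when residuals enter the quadratic contrast linearly (cross terms against OU noise) and quadratically.
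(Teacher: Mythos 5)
Your proposal follows essentially the same route as the paper: the error $\widehat x_{\mf l_1,\mf l_2}-x_{\mf l_1,\mf l_2}$ is split into the $\Psi_M$-discretization part and the $(\widehat\kappa,\widehat\eta)$-plug-in part (this is exactly the decomposition in the proof of Proposition \ref{prop3}, with the stated $L^2$ rates matching $\mathbf U_M(\alpha_0)$, $\epsilon^2\mathbf U_M(\alpha)$ and $\mathcal R_{\alpha,\alpha_0}^{-2}$ times the decay of $\boldsymbol A_{2,i}$), the resulting contrast/score/information differences are shown to vanish under [C1]$_{\alpha,\alpha_0}$--[C5]$_{\alpha,\alpha_0}$, and the limit theorems are then inherited from the small-dispersion OU result (Proposition \ref{prop4}) with the plug-in of $(\widehat\theta_2,\widehat\kappa,\widehat\eta)$ controlled by $\epsilon^{-2}/\mathcal R_{\alpha,\alpha_0}^2\to 0$. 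The plan is correct and consistent with the paper's argument.
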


\begin{rmk}
\begin{itemize}
\item[(i)]
The sufficient conditions [C1]$_{\alpha, \alpha_0}$ and [C2]$_{\alpha, \alpha_0}$
in Theorem \ref{th2}-(1) 
are as weak as or weaker than the sufficient conditions 
[C2]$_{\alpha, \alpha_0}$ and [C3]$_{\alpha, \alpha_0}$
in Theorem \ref{th2}-(2) since 
$1 \lor n \epsilon^2 \le n \epsilon^2 \lor (n \epsilon^2)^{-2}$.

\item[(ii)]
We see that the conditions of 
Proposition \ref{prop3} below
are more relaxed than those of Lemma 5.3 in \cite{TKU2024a}.
This implies that the sufficient conditions 
[C1]$_{\alpha, \alpha_0}$-[C5]$_{\alpha, \alpha_0}$
are more relaxed than the conditions in \cite{TKU2024a}.
This is caused by the facts that the rates of the proposed estimators 
$\widehat \theta_1$, $\widehat \eta_1$, $\widehat \theta_2$ are better 
than those of the existing estimators.
\end{itemize}
\end{rmk}

\section{Simulations}\label{sec3}

A numerical solution of SPDE \eqref{2d_spde} is generated by
\begin{equation*}
\tilde X_{t_{i}}(y_j, z_k)
= \sum_{l_1=1}^{L_1} \sum_{l_2=1}^{L_2} 
x_{l_1,l_2}(t_{i}) e_{l_1,l_2}(y_j, z_k), 
\quad i = 1,\ldots, N, j = 1,\ldots, M_1, k = 1,\ldots, M_2
\end{equation*}
with
\begin{equation*}
\dd x_{l_1,l_2}(t) = -\lambda_{l_1,l_2} x_{l_1,l_2}(t) \dd t
+\epsilon \mu_{l_1,l_2}^{-\alpha/2} \dd w_{l_1,l_2}(t), 
\quad
x_{l_1,l_2}(0) = \langle X_0, e_{l_1,l_2} \rangle.
\end{equation*}
In this simulation, the true values of the parameters
$(\theta_0^*, \theta_1^*,\eta_1^*, \theta_2^*) = (0,0.2,0.2,0.2)$
and $\mu_0^*=-19.5$.
We set $N = 10^3$, $M_1 = M_2 = 200$, $L_1 = L_2 = 10^4$,  
$X_0 = 3 e_{1,1}$, $\alpha = 0.5$ and $\epsilon=0.1$.
We used R language to compute the estimators of Theorems \ref{th1} and \ref{th2}.
The number of Monte Carlo simulations is 200.

We first estimated the diffusive and advective parameters 
$\theta_2$, $\theta_1$ and $\eta_1$ using the thinned data $\mathbb X_{m,N}^{(1)}$
with $b = 0.05$ and $m_1 = m_2 \in \{15, 30, 60 \}$.
Since $A_\theta^{(1+\alpha_0)/2} X_0 = 3 \lambda_{1,1}^{(1+\alpha_0)/2} e_{1,1}$
and $\langle X_0, e_{1,1} \rangle = 3$,
the conditions [A1]$_{\alpha_0}$ and [A2] hold with $\alpha_0 = 2.99$.
Moreover, 
the condition [B2]$_{\alpha,\alpha_0}$ 
holds in this example since we have 
$\frac{1}{\epsilon^2 N^{\alpha_0-\alpha}} = 10^{-4.98}$.
Therefore, 
$\mathcal R_{\alpha,\alpha_0} = \sqrt{m N}$.
Table \ref{tab1} contains the simulation results of the means and the standard deviations
of the estimators $\widehat \theta_1$, $\widehat \eta_1$ and $\widehat \theta_2$.

We next estimated the parameters $\theta_0$ and $\mu_0$
base on the thinned data $\mathbb X_{M,n}^{(2)}$ with $n \in \{50, 100 \}$.
Since we have $n \epsilon^2 \le 1$, 
we only check [C3]$_{\alpha,\alpha_0}$ in order to verify 
that the estimators $\widehat \theta_0$ and $\widehat \mu_0$ are consistent. 
In this case, we have
$\frac{\epsilon^{-4}}{(M_1 \land M_2)^{2(\alpha_0 \tand 1)}} = 0.25$, 
$\frac{\epsilon^{-2}}{(M_1 \land M_2)^{2 (\alpha \tand 1)}} = 0.5$
and $\frac{ n^{-\alpha_0 \tand 2} \epsilon^{-4}
\lor n^{-\alpha \tand 1} \epsilon^{-2}}{\mathcal R_{\alpha,\alpha_0}^2} 
\le 6.3 \times 10^{-5}$,
and thus 
[C3]$_{\alpha,\alpha_0}$ holds in this example.
Table \ref{tab1} shows the simulation results of the means and the standard deviations
of the estimators $\widehat \theta_0$ and $\widehat \mu_0$.

We also calculated the estimators
$\widehat \theta_1^{\mathrm{S}}$, 
$\widehat \eta_1^{\mathrm{S}}$, $\widehat \theta_2^{\mathrm{S}}$,
$\widehat \theta_0^{\mathrm{S}}$ and $\widehat \mu_0^{\mathrm{S}}$
of $\theta_1$, $\eta_1$, $\theta_2$, $\theta_0$ and $\mu_0$
proposed by \cite{TKU2024a}
based on the thinned data $\mathbb X_{m,N}^{(1)}$
with $b = 0.05$ and $m_1 = m_2 \in \{15, 30, 60 \}$
and the thinned data $\mathbb X_{M,n}^{(2)}$ with $n \in \{50, 100 \}$.
Table \ref{tab2} shows the simulation results of the means and the standard deviations
of the estimators $\widehat \theta_1^{\mathrm{S}}$, 
$\widehat \eta_1^{\mathrm{S}}$, $\widehat \theta_2^{\mathrm{S}}$,
$\widehat \theta_0^{\mathrm{S}}$ and $\widehat \mu_0^{\mathrm{S}}$.

We observe that for all cases $m_1=m_2 \in \{15,30,60\}$ and $n = \{50,100\}$,
the biases of $\widehat \theta_1$, $\widehat \eta_1$, $\widehat \theta_2$,
$\widehat \theta_0$ and $\widehat \mu_0$
are smaller than those of $\widehat \theta_1^{\mathrm{S}}$, 
$\widehat \eta_1^{\mathrm{S}}$, $\widehat \theta_2^{\mathrm{S}}$,
$\widehat \theta_0^{\mathrm{S}}$ and $\widehat \mu_0^{\mathrm{S}}$, respectively.

\begin{table}[h]
\caption{Means and standard deviations of the proposed estimators with $b =0.05$}
\label{tab1}
\captionsetup{margin=5pt}
\begin{center}
\begin{tabular*}{0.8\textwidth}{@{\extracolsep{\fill}}ccccccc}\hline
 & $\widehat \theta_1$ & $\widehat \eta_1$ & $\widehat \theta_2$ 
& & $\widehat \theta_0$ & $\widehat \mu_0$ \rule[0mm]{0cm}{4.5mm} \\ \cline{2-4}\cline{6-7} 
$m_1$, $m_2$ & $\theta_1^*=0.2$ & $\eta_1^*=0.2$ & $\theta_2^*=0.2$ 
& $n$ & $\theta_0^*=0$ & $\mu_0^*=-19.5$ \rule[0mm]{0cm}{4.5mm} \\ \hline 
 & & & & \multirow{2}{*}{$50$} & $0.032$ & $-19.467$ \\
 \multirow{2}{*}{$15$} 
 & $0.201$ & $0.202$ & $0.201$ & & ($0.1489$) & ($0.1136$) \\ \cline{6-7}
 & ($0.0031$) & ($0.0029$) & ($0.0036$) 
 & \multirow{2}{*}{$100$} & $0.033$ & $-19.483$ 
 \\
  & & & & & ($0.1481$) & ($0.0760$) \\ \hline
 & & & & \multirow{2}{*}{$50$} & $0.056$ & $-19.466$ \\
 \multirow{2}{*}{$30$}  
 & $0.202$ & $0.202$ & $0.202$ & & ($0.1322$) & ($0.1155$) \\ \cline{6-7}
 & ($0.0010$) & ($0.0009$) & ($0.0015$) 
 & \multirow{2}{*}{$100$} & $0.057$ & $-19.483$ 
 \\
 & & & & & ($0.1315$) & ($0.0770$) \\ \hline
 & & & & \multirow{2}{*}{$50$} & $0.062$ & $-19.465$ \\
 \multirow{2}{*}{$60$}  
 & $0.203$ & $0.203$ & $0.203$ & & ($0.1295$) & ($0.1156$) \\ \cline{6-7}
 & ($0.0003$) & ($0.0004$) & ($0.0004$) 
 & \multirow{2}{*}{$100$} & $0.063$ & $-19.482$ 
 \\
 & & & & & ($0.1288$) & ($0.0768$) \\ \hline
\end{tabular*}
\end{center}
\end{table}

\begin{table}[h]
\caption{Means and standard deviations of the estimators in \cite{TKU2024a} 
with $b =0.05$}
\label{tab2}
\captionsetup{margin=5pt}
\begin{center}
\begin{tabular*}{0.8\textwidth}{@{\extracolsep{\fill}}ccccccc}\hline
 & $\widehat \theta_1^{\mathrm{S}}$ & $\widehat \eta_1^{\mathrm{S}}$ & 
 $\widehat \theta_2^{\mathrm{S}}$ 
& & $\widehat \theta_0^{\mathrm{S}}$ 
& $\widehat \mu_0^{\mathrm{S}}$ \rule[0mm]{0cm}{4.5mm} \\ \cline{2-4}\cline{6-7} 
$m_1$, $m_2$ & $\theta_1^*=0.2$ & $\eta_1^*=0.2$ & $\theta_2^*=0.2$ 
& $n$ & $\theta_0^*=0$ & $\mu_0^*=-19.5$ \rule[0mm]{0cm}{4.5mm} \\ \hline 
 & & & & \multirow{2}{*}{$50$} & $-0.114$ & $-19.359$ \\
 \multirow{2}{*}{$15$} 
 & $0.163$ & $0.165$ & $0.195$ & & ($0.1646$) & ($0.1592$) \\ \cline{6-7}
 & ($0.0024$) & ($0.0024$) & ($0.0041$) 
 & \multirow{2}{*}{$100$} & $-0.114$ & $-19.383$ 
 \\
  & & & & & ($0.1639$) & ($0.1049$) \\ \hline
 & & & & \multirow{2}{*}{$50$} & $0.165$ & $-19.319$ \\
 \multirow{2}{*}{$30$}  
 & $0.164$ & $0.167$ & $0.210$ & & ($0.1560$) & ($0.1763$) \\ \cline{6-7}
 & ($0.0019$) & ($0.0020$) & ($0.0029$) 
 & \multirow{2}{*}{$100$} & $0.166$ & $-19.345$ 
 \\
 & & & & & ($0.1554$) & ($0.1169$) \\ \hline
 & & & & \multirow{2}{*}{$50$} & $0.313$ & $-19.298$ \\
 \multirow{2}{*}{$60$}  
 & $0.165$ & $0.168$ & $0.217$ & & ($0.1543$) & ($0.1854$) \\ \cline{6-7}
 & ($0.0019$) & ($0.0020$) & ($0.0028$) 
 & \multirow{2}{*}{$100$} & $0.313$ & $-19.325$ 
 \\
 & & & & & ($0.1537$) & ($0.1228$) \\ \hline
\end{tabular*}
\end{center}
\end{table}

\section{Proofs}\label{sec4}
We set the following notation.
\begin{enumerate}
\item[1.]
Let $\mathbb R_{+}=(0,\infty)$.

\item[2.]
For $a,b \in \mathbb R$, 
we write $a \lesssim b$ if $|a| \le C|b|$ for some constant $C>0$.

\item[3.]
For two functions $f, g : \mathbb R^d \to \mathbb R$,
we write $f(x) \lesssim g(x)$ $(x \to a)$ 
if $f(x) \lesssim g(x)$ in a neighborhood of $x = a$. 

\item[4.]
For $x=(x_1,\ldots, x_d) \in \mathbb R^d$ and $f:\mathbb R^d \to \mathbb R$,
we write $\pd_{x_i} f(x) = \frac{\pd}{\pd x_i}f(x)$,
$\pd_x f(x) = (\pd_{x_1}f(x), \ldots, \pd_{x_d}f(x))$ and 
$\pd_x^2 f(x) = (\pd_{x_j}\pd_{x_i}f(x))_{i,j=1}^d$.

\item[5.]
Let $\ind_A$ be the indicator function of $A$.

\item[6.]
For a function $f:\mathbb R \to \mathbb R$ and a positive number $h$, we write
$D_h f(x) = f(x+h) - f(x)$.
Note that $D_h^2 f(x) =D_h[D_h f](x) =  f(x+2h) - 2f(x+h) +f(x)$.

\item[7.]
For a function $f:\mathbb R^2 \to \mathbb R$ and a positive number $h$, define
\begin{equation*}
D_{1,h} f(x,y) = f(x+h,y) - f(x,y),
\quad 
D_{2,h} f(x) = f(x,y+h) - f(x,y).
\end{equation*} 
\end{enumerate}

Let $\beta_1, \beta_2 > 0$, $\gamma,\gamma_1,\gamma_2,J \ge0$ 
and $p \in \mathbb N \cup \{0\}$. 
\begin{enumerate}
\item[8.]
Let $\boldsymbol F_{\beta_1,\beta_2}^J$ be 
the subspace of functions in $C^2(\mathbb R_{+})$, 
whose element $f$ satisfies
\begin{itemize}
\item[(i)]
$x^{\beta_1} f(x)$, $x^{\beta_1+1} f'(x)$, 
$x^{\beta_1+2} f''(x) \lesssim \ee^{-J x}$ ($x \to 0$), 

\item[(ii)]
$x^{\beta_2 +1} f(x)$, $x^{\beta_2 +2} f'(x)$,  
$x^{\beta_2 +3} f''(x) \lesssim \ee^{-J x}$ ($x \to \infty$).
\end{itemize}
We shall set $\boldsymbol F_{\beta_1,\beta_2} = \boldsymbol F_{\beta_1,\beta_2}^0$.

\item[9.]
Let $\boldsymbol G^p_\gamma$ be 
the subspace of functions in $C^p(\mathbb R^2)$,
whose element $g$ satisfies 
\begin{itemize}
\item[(i)]
$|\pd^j g(x,y)| \lesssim (x^2+y^2)^{\gamma -j/2}$ ($x, y \to 0$) for $j=0,\ldots, p$,

\item[(ii)]
$|\pd^j g(x,y)| \lesssim 1$ ($x, y \to \infty$) for $j=0,\ldots, p$.
\end{itemize}
We shall write $\boldsymbol G_\gamma = \boldsymbol G^0_\gamma$.

\item[10.]
Let $\widetilde {\boldsymbol G}_{\gamma_1,\gamma_2}$ 
be the subspace of functions in $C(\mathbb R^2)$,
whose element $g$ satisfies 
\begin{itemize}
\item[(i)]
there exist $g_1,g_2 \in C(\mathbb R)$ such that $g(x,y) = g_1(x)g_2(y)$,

\item[(ii)]
$g_j(s) \lesssim s^{2\gamma_j}$ ($s \to 0$) for $j=1,2$,

\item[(iii)]
$g_j(s) \lesssim 1$ ($s \to \infty$) for $j=1,2$. 
\end{itemize}

\item[11.]
Let $\boldsymbol B_b$
be the space of bounded and measurable functions on $\mathbb R^2$.

\end{enumerate}

\subsection{Proof of Theorem \ref{th1}}
The lemmas used in the proof can be found at the end of this subsection.

For $\beta \in (0,3)$ and $J \ge 0$, we define 
\begin{equation*}
\psi(x) = \frac{1-\ee^{-x}}{x},
\quad
\psi_J(x) = \frac{(1-\ee^{-x})^2 \ee^{-J x}}{x},
\quad
h_\beta(x) = x^{-\beta},
\quad 
x > 0.
\end{equation*}
\begin{equation*}
\widetilde f_\beta(x, y) = \psi(x) h_\beta(y),
\quad
\widetilde f_{J,\beta}(x, y) = \psi_J(x) h_\beta(y),
\quad
f_\beta(x) = \widetilde f_\beta(x, x),
\quad
f_{J,\beta}(x) = \widetilde f_{J,\beta}(x, x).
\end{equation*}
We can easily see that $f_\beta \in \boldsymbol F_{\beta, \beta}$ and
$f_{J,\beta} \in \boldsymbol F^{J/2}_{\beta -1, \beta}$.
For $y,z \in (0,2)$, we define
\begin{align*}
F_{\beta,\Delta}(y,z) &=
\sum_{ l_1, l_2\ge1}
\frac{1-\ee^{-\lambda_{ l_1, l_2}\Delta}}
{\lambda_{l_1, l_2} \mu_{l_1,l_2}^{\beta}}
\cos(\pi l_1 y)\cos(\pi l_2 z)
\\
&= \Delta^{1+\beta}
\sum_{ l_1, l_2\ge1}
\widetilde f_\beta(\lambda_{l_1,l_2} \Delta, \mu_{l_1,l_2} \Delta) 
\cos(\pi l_1 y) \cos(\pi l_2 z),
\\
\Phi_{j',k'}^{j,k}(\Delta;\beta) &=
\Delta^{1+\beta} \sum_{l_{1}, l_{2} \ge 1}
\widetilde f_{\beta}(\lambda_{l_1, l_2} \Delta, \mu_{l_1, l_2} \Delta)
\\
&\qquad\times
(e_{ l_1}^{(1)}(\widetilde y_j)-e_{ l_1}^{(1)}(\widetilde y_{j-1}))
(e_{ l_2}^{(2)}(\widetilde z_k)-e_{ l_2}^{(2)}(\widetilde z_{k-1}))
\\
&\qquad\times
(e_{ l_1}^{(1)}(\widetilde y_{j'})-e_{ l_1}^{(1)}(\widetilde y_{j'-1}))
(e_{ l_2}^{(2)}(\widetilde z_{k'})-e_{ l_2}^{(2)}(\widetilde z_{k'-1})),
\\
\Phi_{J,j',k'}^{j,k}(\Delta;\beta) &=
\Delta^{1+\beta} \sum_{ l_{1}, l_{2}\ge1}
\widetilde f_{J,\beta}(\lambda_{l_1, l_2} \Delta, \mu_{l_1, l_2} \Delta)
\\
&\qquad\times
(e_{ l_1}^{(1)}(\widetilde y_j)-e_{ l_1}^{(1)}(\widetilde y_{j-1}))
(e_{ l_2}^{(2)}(\widetilde z_k)-e_{ l_2}^{(2)}(\widetilde z_{k-1}))
\\
&\qquad\times
(e_{ l_1}^{(1)}(\widetilde y_{j'})-e_{ l_1}^{(1)}(\widetilde y_{j'-1}))
(e_{ l_2}^{(2)}(\widetilde z_{k'})-e_{ l_2}^{(2)}(\widetilde z_{k'-1})).
\end{align*}
We then obtain the following two propositions. 
\begin{prop}\label{prop1}
Let $\alpha, \alpha_0 \in (0,3)$ and $\delta/\sqrt{\Delta} \equiv r \in(0,\infty)$.
Under [A1]$_{\alpha_0}$, it holds that
\begin{equation*}
\EE[(T_{i,j,k}X)^2]
=\epsilon^2 \bigl(\Delta^\alpha 
\ee^{-\kappa \overline y_j -\eta \overline z_k}
\phi_{r,\alpha}(\theta_2)
+ R_{i,j,k}(\alpha) + \OO(\Delta^{1+ \alpha \tand 2}) \bigr) 
+ R_{i,j,k}(\alpha_0),
\end{equation*}
where 
the remainder $R_{i,j,k}(\beta)$ for  $\beta \in (0,3)$ satisfies 
$\sum_{i=1}^N R_{i,j,k}(\beta) = \OO(\Delta^{\beta})$ uniformly in $j,k$.
\end{prop}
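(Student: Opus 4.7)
The plan is to unfold $T_{i,j,k}X$ using the coordinate-process representation of $X$, reduce $\EE[(T_{i,j,k}X)^2]$ to the lattice sums $\Phi_{j,k}^{j,k}(\Delta;\alpha)$ and $\Phi_{2t_{i-1}/\Delta,j,k}^{j,k}(\Delta;\alpha)$ introduced in the preliminaries, and then extract their leading asymptotics. The tensor-product form of the eigenbasis gives
\begin{equation*}
T_{i,j,k}X = \sum_{l_1,l_2\ge 1}(\Delta_i x_{l_1,l_2})\, D_{l_1,j}^{[y]}\,D_{l_2,k}^{[z]}, \qquad D_{l_1,j}^{[y]} := e_{l_1}^{(1)}(\widetilde y_j)-e_{l_1}^{(1)}(\widetilde y_{j-1}),
\end{equation*}
and analogously for $D_{l_2,k}^{[z]}$. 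Independence of $\{w_{l_1,l_2}\}$ reduces $\EE[(T_{i,j,k}X)^2]$ to a single sum over modes. A direct computation from \eqref{cor-pro}, using the identity $(1-\ee^{-2x})+(1-\ee^{-x})^2 = 2(1-\ee^{-x})$ to simplify the OU increment variance, splits the expectation into a deterministic initial-condition contribution of the form $\sum_{l_1,l_2}\ee^{-2\lambda_{l_1,l_2} t_{i-1}}(1-\ee^{-\lambda_{l_1,l_2}\Delta})^2\langle X_0,e_{l_1,l_2}\rangle^2 (D_{l_1,j}^{[y]})^2 (D_{l_2,k}^{[z]})^2$ and a stochastic contribution equal to $\epsilon^2\bigl[\Phi_{j,k}^{j,k}(\Delta;\alpha) - \frac12 \Phi_{2t_{i-1}/\Delta,j,k}^{j,k}(\Delta;\alpha)\bigr]$. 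The proposition thus reduces to three sub-tasks: (I) the expansion $\Phi_{j,k}^{j,k}(\Delta;\alpha) = \Delta^\alpha \ee^{-\kappa\overline y_j - \eta \overline z_k}\phi_{r,\alpha}(\theta_2) + \OO(\Delta^{1+\alpha\tand 2})$; (II) $\sum_{i=1}^{N}\Phi_{2t_{i-1}/\Delta,j,k}^{j,k}(\Delta;\alpha) = \OO(\Delta^\alpha)$; (III) the $i$-sum of the deterministic contribution is $\OO(\Delta^{\alpha_0})$.

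Task (I) is the main obstacle. Substituting $e_{l}^{(1)}(y) = \sqrt 2 \sin(\pi l y)\ee^{-\kappa y/2}$ and Taylor-expanding the exponential weight about the midpoint $\overline y_j$ gives
\begin{equation*}
(D_{l_1,j}^{[y]})^2 = 4\ee^{-\kappa \overline y_j}\sin^2(\pi l_1 \delta/2)\cos^2(\pi l_1 \overline y_j) + (\text{correction of order }\delta),
\end{equation*}
and symmetrically for $(D_{l_2,k}^{[z]})^2$. Since $\delta^2 = r^2 \Delta$, the leading sum is a two-dimensional Riemann approximation on $\{(l_1\delta, l_2\delta)\}_{l_1,l_2\ge 1}$. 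Passing to polar coordinates $(u_1,u_2) = (\rho\cos\varphi,\rho\sin\varphi)$ and applying the angular identity $\int_0^{2\pi}\cos(\pi\rho(a\cos\varphi+b\sin\varphi))\,d\varphi = 2\pi J_0(\pi\rho\sqrt{a^2+b^2})$ converts the cosine products, via $(1-\cos a)(1-\cos b) = 1 - \cos a - \cos b + \frac12 \cos(a+b) + \frac12 \cos(a-b)$, into the Bessel combination $J_0(\sqrt 2 rx/\sqrt{\theta_2}) - 2J_0(rx/\sqrt{\theta_2}) + 1$, reproducing $\Delta^\alpha \ee^{-\kappa\overline y_j - \eta\overline z_k}\phi_{r,\alpha}(\theta_2)$ after the change of variables $x = \pi\rho\sqrt{\theta_2}/r$. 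The quantitative $\OO(\Delta^{1+\alpha\tand 2})$ error combines the Riemann-sum approximation and the $\delta$-Taylor corrections; the exponent $\alpha\tand 2$ reflects saturation of the second-order remainder when $\alpha$ exceeds $2$. I would formalise these estimates through the function-space framework ($\boldsymbol F_{\beta_1,\beta_2}$, $\boldsymbol G^p_\gamma$, $\widetilde{\boldsymbol G}_{\gamma_1,\gamma_2}$) and invoke the lemmas at the end of this subsection.

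For tasks (II) and (III), the key observation is that $\sum_{i=1}^N \ee^{-2\lambda_{l_1,l_2}t_{i-1}} \le (1-\ee^{-2\lambda_{l_1,l_2}\Delta})^{-1}$ converts one factor $(1-\ee^{-\lambda\Delta})$ into $(2\lambda)^{-1}$. For (III), the summed deterministic contribution is then controlled by $\sum_{l_1,l_2}(1-\ee^{-\lambda_{l_1,l_2}\Delta})\langle X_0, e_{l_1,l_2}\rangle^2 (D_{l_1,j}^{[y]})^2(D_{l_2,k}^{[z]})^2$. Interpolating $(1-\ee^{-\lambda\Delta}) \lesssim (\lambda\Delta)^{a}$ and $(D_{l,j})^2 \lesssim (l\delta)^{2b}$ with exponents $a + b_1 + b_2 = \alpha_0$ chosen in $[0,1]$, together with $l_1^{2b_1}l_2^{2b_2} \le \lambda_{l_1,l_2}^{b_1+b_2}$, bounds this by $C\Delta^{\alpha_0}\sum_{l_1,l_2}\lambda_{l_1,l_2}^{\alpha_0}\langle X_0,e_{l_1,l_2}\rangle^2$, which is finite under [A1]$_{\alpha_0}$. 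Task (II) is treated by the same mechanism: the extra factor $(2\lambda)^{-1}$ converts $\sum_i \Phi_{2t_{i-1}/\Delta,j,k}^{j,k}(\Delta;\alpha)$ into a $\Phi_{j,k}^{j,k}(\Delta;\alpha+1)$-type sum that is $\OO(\Delta^\alpha)$ by a cruder version of the task (I) analysis.
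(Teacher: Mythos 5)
Your route is essentially the paper's: decompose $T_{i,j,k}X$ over the eigenmodes, split each increment $x_{l_1,l_2}(t_i)-x_{l_1,l_2}(t_{i-1})$ into a deterministic part $A_{i,l_1,l_2}$ and a stochastic part $\epsilon B_{i,l_1,l_2}$, identify the stochastic second moment with $\Phi_{j,k}^{j,k}(\Delta;\alpha)-\tfrac12\Phi_{2(i-1),j,k}^{j,k}(\Delta;\alpha)$ (your identity $(1-\ee^{-2x})+(1-\ee^{-x})^2=2(1-\ee^{-x})$ is exactly the simplification used), extract $\phi_{r,\alpha}(\theta_2)$ from the Riemann sum $D_{1,\delta}D_{2,\delta}F_{\alpha,\Delta}(0,0)$ via the Bessel angular average, and control the $i$-sums by geometric summation. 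Tasks (I) and (II) match the paper's computation in substance.

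The gap is in your treatment of the deterministic contribution. Independence of $\{w_{l_1,l_2}\}$ kills the $B$--$B'$ and $A$--$B'$ cross terms, but not the $A$--$A'$ ones: the initial-condition contribution to $\EE[(T_{i,j,k}X)^2]$ is the \emph{square of the sum}
\begin{equation*}
\mathbb A_{i,j,k}=\Biggl(\sum_{l_1,l_2\ge1}A_{i,l_1,l_2}
(e_{l_1}^{(1)}(\widetilde y_j)-e_{l_1}^{(1)}(\widetilde y_{j-1}))
(e_{l_2}^{(2)}(\widetilde z_k)-e_{l_2}^{(2)}(\widetilde z_{k-1}))\Biggr)^2,
\end{equation*}
not the diagonal sum $\sum_{l_1,l_2}A_{i,l_1,l_2}^2(\cdots)^2$ that your task (III) estimates; so the claim that the expectation reduces to ``a single sum over modes'' is false for this part. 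Your interpolation argument ($a+b_1+b_2=\alpha_0$) is a clean bound for the diagonal sum, but it says nothing about the off-diagonal terms, which can interfere constructively. The paper decouples them by a weighted Cauchy--Schwarz: writing $A_{i,l_1,l_2}=\lambda_{l_1,l_2}^{(1+\alpha_0)/2}\langle X_0,e_{l_1,l_2}\rangle\cdot\lambda_{l_1,l_2}^{-(1+\alpha_0)/2}(1-\ee^{-\lambda_{l_1,l_2}\Delta})\ee^{-(i-1)\lambda_{l_1,l_2}\Delta}$ yields $\mathbb A_{i,j,k}\lesssim\|A_\theta^{(1+\alpha_0)/2}X_0\|^2\,\Phi_{2(i-1),j,k}^{j,k}(\Delta;\alpha_0)$, and then $\sum_{i=1}^N\Phi_{2(i-1),j,k}^{j,k}(\Delta;\alpha_0)\le\Phi_{j,k}^{j,k}(\Delta;\alpha_0)=\OO(\Delta^{\alpha_0})$. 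This is precisely where the full strength of [A1]$_{\alpha_0}$ (exponent $1+\alpha_0$ rather than $\alpha_0$) is spent; the fact that your diagonal-only bound needs merely $\sum\lambda_{l_1,l_2}^{\alpha_0}\langle X_0,e_{l_1,l_2}\rangle^2<\infty$ is a symptom that the cross terms have been dropped. Once this Cauchy--Schwarz step is inserted, the rest of your argument goes through.
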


\begin{prop}\label{prop2}
Let $\alpha \in (0,3)$. 
Assume that [A1]$_{\alpha_0}$ with $\alpha_0 \in (0,3)$ holds. 
Then, for $\delta = r \sqrt{\Delta}$, it holds that
\begin{itemize}
\item[(1)]
$\cov[T_{i,j,k}X, T_{i',j',k'}X]
= \epsilon^2 \times 
\begin{cases}
\Phi_{j',k'}^{j,k}(\Delta;\alpha) 
-\frac{1}{2}\Phi_{2(i-1),j',k'}^{j,k}(\Delta;\alpha), & i = i',
\\
-\frac{1}{2}(\Phi_{|i-i'|-1,j',k'}^{j,k}(\Delta;\alpha) 
+ \Phi_{i+i'-2,j',k'}^{j,k}(\Delta;\alpha)), & i \neq i',
\end{cases}
$

\item[(2)]
$\cov[T_{i,j,k}X, T_{i',j',k'}X]
\\=
\OO \Bigl(
\frac{\epsilon^2 \Delta^{\alpha \tand 2}}{|i-i'|+1}
\Bigl(
\Delta
+ \frac{1}{(|j-j'|+1)(|k-k'|+1)}
\Bigr) \Bigr)
+\OO \Bigl(
\frac{\epsilon^2 \Delta^{1/2 +\alpha \tand 2}}{|i-i'|+1}
\Bigl(
\frac{\ind_{\{j \neq j'\}}}{|j-j'|+1} +\frac{\ind_{\{k \neq k'\}}}{|k-k'|+1}
\Bigr) \Bigr)$,

\item[(3)]
$\displaystyle \sum_{i,i'=1}^N \cov[T_{i,j,k}X, T_{i',j,k}X]^2
=\OO(\epsilon^4 N \Delta^{2\alpha})$
uniformly in $j, k$,

\item[(4)]
$\displaystyle \sum_{k,k'=1}^{m_2} \sum_{j,j'=1}^{m_1} \sum_{i,i'=1}^N 
\cov[T_{i,j,k}X, T_{i',j',k'}X]^2
=\OO \bigl( \epsilon^4 m N (\Delta^{\alpha \tand 2})^2 \bigr)$,

\item[(5)]
$\displaystyle \sum_{i,i'=1}^N \cov[(T_{i,j,k}X)^2, (T_{i',j,k}X)^2 ]
=\OO \bigl(\epsilon^4 N \Delta^{2\alpha}
( \epsilon^{-2} \Delta^{\alpha_0 -\alpha +1/2} \lor 1 )\bigr)$ uniformly in $j, k$,

\item[(6)]
$\displaystyle \sum_{k,k'=1}^{m_2} \sum_{j,j'=1}^{m_1} \sum_{i,i'=1}^N 
\cov[(T_{i,j,k}X)^2, (T_{i',j',k'}X)^2]
=\OO \bigl(\epsilon^4 m N (\Delta^{\alpha \tand 2})^2 
(\epsilon^{-2} \Delta^{\alpha_0 -\alpha \tand 2} \lor 1 ) \bigr)$.
\end{itemize}
\end{prop}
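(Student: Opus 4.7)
The plan is to exploit the decomposition
\begin{equation*}
T_{i,j,k}X = \sum_{l_1, l_2 \ge 1} (x_{l_1,l_2}(t_i) - x_{l_1,l_2}(t_{i-1})) D_j^{[y]} e_{l_1}^{(1)} D_k^{[z]} e_{l_2}^{(2)},
\end{equation*}
where $D_j^{[y]} e_{l_1}^{(1)} = e_{l_1}^{(1)}(\widetilde y_j) - e_{l_1}^{(1)}(\widetilde y_{j-1})$ and similarly for the $z$-direction. Independence of the coordinate processes $x_{l_1,l_2}$ across modes reduces $\cov[T_{i,j,k}X, T_{i',j',k'}X]$ to a single sum over $(l_1,l_2)$. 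For part (1), I would insert the stochastic-integral representation \eqref{cor-pro} and use
\begin{equation*}
\cov\Bigl(\int_0^s \ee^{-\lambda(s-u)}\dd w_u, \int_0^t \ee^{-\lambda(t-u)}\dd w_u\Bigr) = \tfrac{1}{2\lambda}(\ee^{-\lambda|t-s|}-\ee^{-\lambda(t+s)})
\end{equation*}
to evaluate the covariance of two temporal increments of a single OU mode. Collapsing the resulting eight exponentials yields $\frac{\epsilon^2}{2\lambda_{l_1,l_2}\mu_{l_1,l_2}^\alpha}[2(1-\ee^{-\lambda_{l_1,l_2}\Delta}) - (1-\ee^{-\lambda_{l_1,l_2}\Delta})^2 \ee^{-2\lambda_{l_1,l_2}(i-1)\Delta}]$ when $i=i'$, and the analogous $-\frac{\epsilon^2}{2\lambda_{l_1,l_2}\mu_{l_1,l_2}^\alpha}(1-\ee^{-\lambda_{l_1,l_2}\Delta})^2(\ee^{-(|i-i'|-1)\lambda_{l_1,l_2}\Delta}+\ee^{-(i+i'-2)\lambda_{l_1,l_2}\Delta})$ when $i\neq i'$. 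Multiplying by the spatial-increment factors and identifying the definitions of $\Phi_{j',k'}^{j,k}$ and $\Phi_{J,j',k'}^{j,k}$ gives (1).

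For (2), I would bound $\Phi_{J,j',k'}^{j,k}$ by approximating the sum over $(l_1,l_2)$ by an integral over $(\lambda\Delta,\mu\Delta)$, using $f_\alpha \in \boldsymbol F_{\alpha,\alpha}$ and $f_{J,\alpha} \in \boldsymbol F_{\alpha-1,\alpha}^{J/2}$ to dominate the sums; the damping factor $\ee^{-J\lambda\Delta/2}$ produces the $\frac{1}{|i-i'|+1}$ temporal decay once we reinterpret $J=|i-i'|-1$ or $J=i+i'-2$. The quadruple product of spatial increments is handled by Abel summation: telescoping in one index yields the $\Delta$ correction, while cosine-oscillation cancellation in the other yields the $\frac{1}{(|j-j'|+1)(|k-k'|+1)}$ decay. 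The transition $\alpha \tand 2$ appears because the effective mode weight $\sum_l l^{1-2\alpha}$ converges precisely for $\alpha<2$ and acquires a logarithm at $\alpha=2$.

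Parts (3) and (4) then follow by squaring the bound of (2) and summing. The diagonal $i=i'$ dominates and yields $\OO(\epsilon^4 N \Delta^{2\alpha})$ for (3), while the fourfold spatial summation combined with $\sum_{p\ge0}(p+1)^{-2}<\infty$ in time gives (4). For (5) and (6), I would apply the joint-Gaussian identity
\begin{equation*}
\cov[U^2,V^2] = 2\cov[U,V]^2 + 4\EE[U]\EE[V]\cov[U,V]
\end{equation*}
with $U=T_{i,j,k}X$ and $V=T_{i',j',k'}X$. The squared-covariance piece is bounded by a constant times (3) or (4); the mean-cross piece requires controlling $\EE[T_{i,j,k}X]$, which is a triple increment of the deterministic flow $\ee^{-tA_\theta}X_0$ and is estimated via [A1]$_{\alpha_0}$ as $|\EE[T_{i,j,k}X]| = \OO(\Delta^{(1+\alpha_0)/2})$ times appropriate spatial-increment factors. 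Cauchy--Schwarz combined with the sums from (3)--(4) then produces the extra factor $\epsilon^{-2}\Delta^{\alpha_0-\alpha+1/2}$ in (5) and $\epsilon^{-2}\Delta^{\alpha_0-\alpha\tand 2}$ in (6), which is exactly what appears inside the $\vee\, 1$ in the stated bounds.

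The main obstacle is the bookkeeping in (2): one must simultaneously retain decay in $|i-i'|$, $|j-j'|$, and $|k-k'|$, manage the non-uniform nodes $\widetilde y_j,\widetilde z_k$ together with the exponential weights $\ee^{-\kappa\cdot/2},\ee^{-\eta\cdot/2}$, and isolate the logarithmic factor at $\alpha=2$. I expect this will lean heavily on the integral-approximation lemmas for the $\boldsymbol F$-classes (presumably established among the preceding lemmas referenced at the start of this subsection) and on an Abel-type summation of $\pi l\cos(\pi l x)$ weighted by $(\pi^2 l^2+\cdot)^{-\alpha/2}$. Once (2) is established, the combinatorial sums in (3)--(6) and the Gaussian identity underlying (5)--(6) are essentially routine.
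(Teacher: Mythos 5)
Your outline follows essentially the same route as the paper: the mode-wise OU covariance computation giving the $\Phi_{j',k'}^{j,k}$ and $\Phi_{J,j',k'}^{j,k}$ representation in (1), the integral-approximation and Abel/oscillation bounds on these sums for (2) (the paper packages exactly this as Lemma \ref{lem9}, with the factor $(J+1)^{-1}$ supplying the temporal decay), squaring and summing for (3)--(4), and the jointly Gaussian identity $\cov[U^2,V^2]=2\cov[U,V]^2+4\EE[U]\EE[V]\cov[U,V]$ plus Cauchy--Schwarz for (5)--(6). The one inaccuracy is your claimed uniform pointwise bound $|\EE[T_{i,j,k}X]|=\OO(\Delta^{(1+\alpha_0)/2})$: for small $i$ the damping factor $\ee^{-(i-1)\lambda_{l_1,l_2}\Delta}$ is absent and the best uniform bound is only $\OO(\Delta^{\alpha_0/2})$. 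What the Cauchy--Schwarz step actually requires is the summed quantity $\sum_{i=1}^N\EE[T_{i,j,k}X]^2=\OO(\Delta^{\alpha_0})$, which [A1]$_{\alpha_0}$ delivers by interchanging the sum over $i$ with the mode sum and summing the geometric series $\sum_i(1-\ee^{-\lambda_{l_1,l_2}\Delta})\ee^{-2(i-1)\lambda_{l_1,l_2}\Delta}\le 1$ (naively summing the pointwise bound of Lemma \ref{lem9}-(4) would even cost a spurious $\log N$). With that replacement your argument closes and reproduces the stated rates.
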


We will postpone the proofs of Propositions \ref{prop1} and \ref{prop2}, 
and first show Theorem \ref{th1} using the above results.
For $b \in (0,1/2)$ and $u,v \in L^2((b,1-b)^2)$, we denote
\begin{equation*}
\langle u,v \rangle_b = 
\frac{1}{(1-2b)^2} \int_b^{1-b}\int_b^{1-b} u(x,y) v(x,y)\dd x \dd y,
\quad
\| u \|_b = \sqrt{\langle u,u \rangle_b}.
\end{equation*}
Let $\vartheta = (\kappa, \eta ,\theta_2)$ and 
$\vartheta^* = (\kappa^*, \eta^* ,\theta_2^*)$. We write 
$h_{r,\alpha}(y,z:\vartheta) = \ee^{-\kappa y -\eta z} \phi_{r,\alpha}(\theta_2)$,
\begin{align*}
\boldsymbol U(\vartheta) &= 
\Bigl(
\bigl\langle 
\pd_{\vartheta_p} h_{r,\alpha}(\cdot,\cdot:\vartheta), 
\pd_{\vartheta_q} h_{r,\alpha}(\cdot,\cdot:\vartheta)
\bigr\rangle_b
\Bigr)_{1\le p,q \le 3},
\\
\mathcal U(\vartheta,\vartheta^*) 
&= \|h_{r,\alpha}(\cdot,\cdot:\vartheta) -h_{r,\alpha}(\cdot,\cdot:\vartheta^*) \|_b^2.
\end{align*}
Using the mean value theorem, we obtain
\begin{equation*}
-\mathcal R_{\alpha,\alpha_0} \pd_{\vartheta} \mathcal U_{m,N}^{\epsilon}(\vartheta^*)^\TT
=\int_0^1 \pd_{\vartheta}^2 
\mathcal U_{m,N}^{\epsilon}(\vartheta^* +u(\widehat\vartheta -\vartheta^*)) \dd u 
\mathcal R_{\alpha,\alpha_0} (\widehat\vartheta-\vartheta^*).
\end{equation*}
To complete the proof, we will show that
\begin{enumerate}
\item[(i)]
$\mathcal U(\vartheta,\vartheta^*)$ takes its unique minimum in 
$\vartheta = \vartheta^*$,

\item[(ii)]
$\mathcal U_{m,N}^{\epsilon}(\vartheta) \pto \mathcal U(\vartheta,\vartheta^*)$ 
uniformly in $\vartheta \in \Xi$
as $\epsilon \to 0$, $m \to \infty$ and $N \to \infty$,

\item[(iii)]
$\mathcal R_{\alpha,\alpha_0} 
\pd_{\vartheta} \mathcal U_{m,N}^{\epsilon}(\vartheta^*) = \OO_\PP(1)$,

\item[(iv)]
$\displaystyle \sup_{|\vartheta-\vartheta^*|\le \delta_{m,N}^{\epsilon}}
|\pd^2 \mathcal U_{m,N}^{\epsilon}(\vartheta) - 2\boldsymbol U(\vartheta^*)| \pto 0$
for $\delta_{m,N}^{\epsilon} \to 0$ 
as $\epsilon \to 0$, $m \to \infty$ and $N \to \infty$,

\item[(v)]
$\boldsymbol U = \boldsymbol U(\vartheta^*)$ is strictly positive definite.
\end{enumerate}


\begin{proof}[Proof of (i)]
We show that $\mathcal U(\vartheta,\vartheta^*)=0$ if and only if 
$\vartheta=\vartheta^*$.
Since 
\begin{equation*}
\phi_{r,\alpha}(\theta_2)\ee^{-\kappa y -\eta z} 
= h_{r,\alpha}(y,z:\vartheta) 
= h_{r,\alpha}(y,z:\vartheta^*)
= \phi_{r,\alpha}(\theta_2^*)\ee^{-\kappa^* y -\eta^* z}
\end{equation*}
for any $y,z \in [b,1-b]$ if $\mathcal U(\vartheta,\vartheta^*)=0$
and the function $(0, \infty) \ni \theta_2 \mapsto \phi_{r,\alpha}(\theta_2)$
is injective for any $r > 0$ and $\alpha \in (0,3)$
according to Lemma \ref{lem11} below, we have $\vartheta = \vartheta^*$.
It is obvious that $\mathcal U(\vartheta,\vartheta^*)=0$ if $\vartheta = \vartheta^*$.
\end{proof}

\begin{proof}[Proof of (ii)]
Let 
\begin{equation*}
Z_{i,j,k} =
\frac{(T_{i,j,k} X)^2}{\epsilon^2 \Delta^\alpha} 
-h_{r,\alpha}(\overline y_j, \overline z_k:\vartheta^*),
\quad
\mathcal Z_{j,k} = \frac{1}{N}\sum_{i=1}^N Z_{i,j,k},
\end{equation*}
and $\varphi_{r,\alpha}(y,z:\vartheta) = 
h_{r,\alpha}(y,z:\vartheta^*) - h_{r,\alpha}(y,z:\vartheta)$.
We then have
\begin{equation*}
\mathcal U_{m,N}^{\epsilon}(\vartheta)
=\frac{1}{m}\sum_{k=1}^{m_2} \sum_{j=1}^{m_1} \mathcal Z_{j,k}^2
+\frac{2}{m}\sum_{k=1}^{m_2} \sum_{j=1}^{m_1} \mathcal Z_{j,k} 
\varphi_{r,\alpha}(\overline y_j, \overline z_k:\vartheta)
+\frac{1}{m}\sum_{k=1}^{m_2} \sum_{j=1}^{m_1}
\varphi_{r,\alpha}^2(\overline y_j, \overline z_k:\vartheta),
\end{equation*}
and
\begin{align}
\sup_{\vartheta \in \Xi}|\mathcal U_{m,N}^{\epsilon}(\vartheta)-\mathcal U(\vartheta,\vartheta^*)|
&\le
\frac{1}{m}\sum_{k=1}^{m_2} \sum_{j=1}^{m_1} \mathcal Z_{j,k}^2
+2\sup_{\vartheta \in \Xi}
\Biggl|
\frac{1}{m}\sum_{k=1}^{m_2} \sum_{j=1}^{m_1} \mathcal Z_{j,k} 
\varphi_{r,\alpha}(\overline y_j, \overline z_k:\vartheta)
\Biggr|
\nonumber
\\
&\qquad
+\sup_{\vartheta \in \Xi}
\Biggl|
\frac{1}{m}\sum_{k=1}^{m_2} \sum_{j=1}^{m_1}
\varphi_{r,\alpha}^2(\overline y_j, \overline z_k:\vartheta)
-\mathcal U(\vartheta,\vartheta^*)
\Biggr|.
\label{eq-2-4}
\end{align}
In order to get the desired result, we will show that 
all terms of \eqref{eq-2-4} converge to $0$ in probability.
Since the function $(y,z,\vartheta) \mapsto \varphi_{r,\alpha}(y,z:\vartheta)$ is
continuous on the compact set $[b,1-b]^2 \times \Xi$
and we can write $\mathcal U(\vartheta, \vartheta^*) 
= \frac{1}{(1 -2b)^2} 
\int_b^{1-b} \int_b^{1-b} \varphi_{r,\alpha}^2(y,z:\vartheta) \dd y \dd z$, 
the last term of \eqref{eq-2-4} converges to $0$ as $m \to \infty$.
Define $\mathcal S_{\Delta, \epsilon}(\alpha,\alpha_0)
= \Delta^{1-\alpha +\alpha \tand 2} 
\lor \epsilon^{-2}\Delta^{1+\alpha_0-\alpha}$. 
By Proposition \ref{prop1}, one has 
\begin{equation*}
Z_{i,j,k} =
\epsilon^{-2} \Delta^{-\alpha}
\bigl((T_{i,j,k} X)^2-\EE[(T_{i,j,k} X)^2] \bigr) + r_{i,j,k}(\alpha,\alpha_0),
\end{equation*}
where 
$\sum_{i=1}^N r_{i,j,k}(\alpha,\alpha_0) 
= \OO(\Delta^{-1} \mathcal S_{\Delta, \epsilon}(\alpha,\alpha_0))$ uniformly in $j,k$.
Noting that
$\mathcal S_{\Delta, \epsilon}(\alpha,\alpha_0) = \oo(1)$ 
under [B1]$_{\alpha, \alpha_0}$ with $\alpha, \alpha_0 \in (0,3)$
and $\varphi_{r,\alpha}$ is bounded, 
one sees from [B1]$_{\alpha, \alpha_0}$, Proposition \ref{prop2}-(5) and the Schwarz inequality
that for $\alpha, \alpha_0 \in (0,3)$, 
\begin{align*}
\EE[\mathcal Z_{j,k}^2]
&=\frac{1}{N^2}\sum_{i,i'=1}^N \EE[Z_{i,j,k} Z_{i',j,k}]
\\
&= \frac{1}{N^2}\sum_{i,i'=1}^N 
\bigl(\epsilon^{-4} \Delta^{-2\alpha}\cov[(T_{i,j,k} X)^2,(T_{i',j,k} X)^2]
+r_{i,j,k}(\alpha,\alpha_0)r_{i',j,k}(\alpha,\alpha_0) \bigr)
\\
&=\OO \biggl(
\frac{\epsilon^{-2} \Delta^{\alpha_0 -\alpha +1/2} \lor 1}{N}  
\biggr)
+\OO \biggl( 
\Bigl(\frac{\Delta^{-1} \mathcal S_{\Delta,\epsilon}(\alpha,\alpha_0)}{N} \Bigr)^2
\biggr)
\\
&=\OO (\epsilon^{-2} \Delta^{1 +\alpha_0 -\alpha} \cdot \Delta^{1/2} \lor \Delta)
+\OO \bigl( 
\mathcal S_{\Delta,\epsilon}(\alpha,\alpha_0)^2
\bigr)
\\
&=\oo(1)
\end{align*}
uniformly in $j,k$, and
\begin{align*}
\EE\Biggl[
\sup_{\vartheta \in \Xi}
\biggl|\frac{1}{m}
\sum_{k=1}^{m_2} \sum_{j=1}^{m_1} 
\mathcal Z_{j,k} \varphi_{r,\alpha}(\overline y_j, \overline z_k:\vartheta)\biggr|^2
\Biggr]
&\lesssim
\Biggl(
\frac{1}{m} \sum_{k=1}^{m_2} \sum_{j=1}^{m_1} \EE[\mathcal Z_{j,k}^2]^{1/2}
\Biggr)^2
=\oo(1),
\end{align*}
which mean the first and second terms of \eqref{eq-2-4} converge to $0$ in probability.
\end{proof}

\begin{proof}[Proof of (iii)]
We find from Proposition \ref{prop1} and Proposition \ref{prop2}-(6) that
\begin{align*}
&\VV \Biggl[
\sum_{k=1}^{m_2} \sum_{j=1}^{m_1} \sum_{i=1}^N 
(T_{i,j,k}X)^2 \pd_\vartheta h_{r,\alpha}(\overline y_j, \overline z_k:\vartheta^*)^\TT
\Biggr]
\\
&=
\sum_{k,k'=1}^{m_2} \sum_{j,j'=1}^{m_1} \sum_{i,i'=1}^N 
\cov[(T_{i,j,k}X)^2,(T_{i',j',k'}X)^2] 
\pd_\vartheta h_{r,\alpha}(\overline y_j, \overline z_k:\vartheta^*)^\TT
\pd_\vartheta h_{r,\alpha}(\overline y_{j'}, \overline z_{k'}:\vartheta^*)
\\
&=\OO(\epsilon^4 m N (\Delta^{\alpha \tand 2})^2
(1 \lor \epsilon^{-2} \Delta^{\alpha_0 -\alpha \tand 2}))
\end{align*}
and 
\begin{equation*}
\sum_{i=1}^N 
\Bigl( \EE[(T_{i,j,k}X)^2] 
- \epsilon^2 \Delta^\alpha h_{r,\alpha}(\overline y_j, \overline z_k:\vartheta^*) \Bigr) 
= \OO(\epsilon^2 \Delta^{\alpha -1} \mathcal S_{\Delta,\epsilon}(\alpha,\alpha_0))
\end{equation*}
uniformly in $j,k$. Therefore, we have
\begin{align*}
&\mathcal R_{\alpha,\alpha_0} 
\pd_{\vartheta} \mathcal U_{m,N}^{\epsilon} (\vartheta^*)
\\
&=
\mathcal R_{\alpha,\alpha_0}
\times \frac{-2}{m}\sum_{k=1}^{m_2} \sum_{j=1}^{m_1}
\Biggl(
\frac{1}{\epsilon^2 N \Delta^\alpha}\sum_{i=1}^N (T_{i,j,k}X)^2 
- h_{r,\alpha}(\overline y_j, \overline z_k:\vartheta^*)
\Biggr)
\pd_\vartheta h_{r,\alpha}(\overline y_j, \overline z_k:\vartheta^*)
\\
&=
\frac{-2 \mathcal R_{\alpha,\alpha_0}}{\epsilon^2 m N \Delta^\alpha}
\sum_{k=1}^{m_2} \sum_{j=1}^{m_1} 
\sum_{i=1}^N 
\Bigl( (T_{i,j,k}X)^2 - \EE[(T_{i,j,k}X)^2] \Bigr)
\pd_\vartheta h_{r,\alpha}(\overline y_j, \overline z_k:\vartheta^*)
\\
&\qquad
-\frac{2 \mathcal R_{\alpha,\alpha_0}}{\epsilon^2 m N \Delta^\alpha}
\sum_{k=1}^{m_2} \sum_{j=1}^{m_1} 
\sum_{i=1}^N 
\Bigl( \EE[(T_{i,j,k}X)^2] 
- \epsilon^2 \Delta^\alpha h_{r,\alpha}(\overline y_j, \overline z_k:\vartheta^*) \Bigr)
\pd_\vartheta h_{r,\alpha}(\overline y_j, \overline z_k:\vartheta^*)^\TT
\\
&=\OO_\PP \biggl(\frac{1}
{\sqrt{1 \lor \epsilon^{-2} \Delta^{\alpha_0 -\alpha \tand 2}}} \biggr) 
+\OO(1)
\\
&= \OO_\PP(1).
\end{align*}
\end{proof}

\begin{proof}[Proof of (iv)]
We have the representation
\begin{align*}
\pd_{\vartheta}^2 \mathcal U_{m,N}^{\epsilon}(\vartheta) &=
\frac{2}{m}\sum_{k=1}^{m_2}\sum_{j=1}^{m_1}
\pd_\vartheta h_{r,\alpha}(\overline y_j, \overline z_k:\vartheta)^\TT
\pd_\vartheta h_{r,\alpha}(\overline y_j, \overline z_k:\vartheta)
\nonumber
\\
&\qquad
-\frac{2}{m}\sum_{k=1}^{m_2}\sum_{j=1}^{m_1}\Biggl(
\frac{1}{\epsilon^2 N \Delta^\alpha}\sum_{i=1}^N (T_{i,j,k}X)^2 
-h_{r,\alpha}(\overline y_j, \overline z_k:\vartheta)
\Biggr)\pd_\vartheta^2 h_{r,\alpha}(\overline y_j, \overline z_k:\vartheta)
\nonumber
\\
&=:
\frac{1}{m}\sum_{k=1}^{m_2}\sum_{j=1}^{m_1}
\biggl\{
g_1(\overline y_j, \overline z_k:\vartheta)
+\biggl(
\frac{1}{\epsilon^2 N \Delta^\alpha}\sum_{i=1}^N (T_{i,j,k}X)^2 
\biggr)g_2(\overline y_j, \overline z_k:\vartheta)
\biggr\}.
\end{align*}
Noting that the function $(y,z,\vartheta) \mapsto g_ l(y,z:\vartheta)$ is continuous 
on the compact set $[b,1-b]^2 \times \Xi$ and 
\begin{equation*}
\sup_{m,N}
\EE\Biggl[
\frac{1}{\epsilon^2 m N\Delta^\alpha}
\sum_{k=1}^{m_2}\sum_{j=1}^{m_1}\sum_{i=1}^N (T_{i,j,k}X)^2
\Biggr]
<\infty
\end{equation*}
under [B1]$_{\alpha,\alpha_0}$, we obtain 
\begin{align*}
&\sup_{|\vartheta-\vartheta^*|\le \delta_{m,N}^{\epsilon}}
|\pd_{\vartheta}^2 \mathcal U_{m,N}^{\epsilon}(\vartheta)
- \pd_{\vartheta}^2 \mathcal U_{m,N}^{\epsilon}(\vartheta^*)|
\\
&\le
\sup_{\substack{y,z \in [b,1-b],\\ |\vartheta-\vartheta^*|\le \delta_{m,N}^{\epsilon}}}
|g_1(y,z:\vartheta) - g_1(y,z:\vartheta^*)|
\\
&\qquad
+
\Biggl(
\frac{1}{\epsilon^2 m N \Delta^\alpha}
\sum_{k=1}^{m_2}\sum_{j=1}^{m_1}\sum_{i=1}^N (T_{i,j,k}X)^2
\Biggr)
\\
&\qquad \qquad \times
\sup_{\substack{y,z \in [b,1-b],\\ |\vartheta-\vartheta^*|\le \delta_{m,N}^{\epsilon}}}
|g_2(y,z:\vartheta) - g_2(y,z:\vartheta^*)|
\\
&\pto 0
\end{align*}
as $\epsilon \to 0$, $m \to \infty$ and $N \to \infty$.
Since we can write
\begin{equation*}
\boldsymbol U(\vartheta) 
= \frac{1}{(1-2b)^2} \int_b^{1-b} \int_b^{1-b} 
\pd_{\vartheta} h_{r,\alpha}(y,z:\vartheta)^\TT 
\pd_{\vartheta} h_{r,\alpha}(y,z:\vartheta) \dd y \dd z,
\end{equation*}
in the same manner as the proof of (ii), we obtain
\begin{equation*}
\frac{1}{m}
\sum_{k=1}^{m_2}\sum_{j=1}^{m_1}
\pd_\vartheta h_{r,\alpha}(\overline y_j, \overline z_k:\vartheta^*)^\TT
\pd_\vartheta h_{r,\alpha}(\overline y_j, \overline z_k:\vartheta^*)
\to \boldsymbol U(\vartheta^*),
\end{equation*}
\begin{equation*}
\frac{1}{m}\sum_{k=1}^{m_2}\sum_{j=1}^{m_1}\Biggl(
\frac{1}{\epsilon^2 N \Delta^\alpha}\sum_{i=1}^N (T_{i,j,k}X)^2 
-h_{r,\alpha}(\overline y_j, \overline z_k:\vartheta^*)
\Biggr)\pd_\vartheta^2 h_{r,\alpha}(\overline y_j, \overline z_k:\vartheta^*)
\pto 0
\end{equation*}
as $\epsilon \to 0$, $m \to \infty$ and $N \to \infty$, 
which indicate that 
\begin{equation*}
\pd_{\vartheta}^2 \mathcal U_{m,N}^{\epsilon}(\vartheta^*) 
\pto 2 \boldsymbol U(\vartheta^*)
\end{equation*}
as $\epsilon \to 0$, $m \to \infty$ and $N \to \infty$, 
This concludes the proof of (iv).
\end{proof}

\begin{proof}[Proof of (v)]
Let $g_p = \pd_{\vartheta_p} h_{r,\alpha}(\cdot,\cdot:\vartheta^*)$ for $p=1,2,3$.
Since $g=(g_p)_{p=1}^3$ is given by 
\begin{equation*}
g(y,z) = \ee^{-\kappa^* y} \ee^{-\eta^* z} (-y \phi_{r,\alpha}(\theta_2^*), 
-z \phi_{r,\alpha}(\theta_2^*), \phi_{r,\alpha}'(\theta_2^*))
\end{equation*}
and $\phi_{r,\alpha}(\theta_2^*)\phi_{r,\alpha}'(\theta_2^*) \neq 0$,
we find that for any $v = (v_p)_{p=1}^3 \in \mathbb R^3 \setminus \{0\}$,
\begin{equation*}
v^\TT \boldsymbol U v 
=\sum_{p,q=1}^3 v_p v_q \langle g_p, g_q \rangle_b
= \Biggl\| \sum_{p=1}^3 v_p g_p \Biggr\|_b^2 > 0.
\end{equation*}
Indeed, if 
$\| \sum_{p=1}^3 v_p g_p \|_b =0$,
then by the identity
\begin{equation*}
\sum_{p=1}^3 v_p g_p(y,z) = 0,
\quad (y,z) \in [b,1-b]^2  
\end{equation*}
and the linear independence of the functions $(g_p)_{p=1}^3$, we have $v_1 = v_2 = v_3 = 0$.
\end{proof}

\subsubsection{Proofs of Propositions \ref{prop1} and \ref{prop2}}
Here, we provide the proofs of Propositions \ref{prop1} and \ref{prop2}.
See the next subsubsection for auxiliary results used in the proofs.

\begin{lem}\label{lem9}
For $\beta \in (0,3)$ and $\delta = r\sqrt{\Delta}$, it follows that

\begin{itemize}
\item[(1)]
\begin{align*}
\Phi_{j',k'}^{j,k}(\Delta;\beta)
&= \ee^{-\kappa(\widetilde y_{j-1}+\widetilde y_{j'})/2}
\ee^{-\eta(\widetilde z_{k-1}+\widetilde z_{k'})/2}
D_{1,\delta}^2 D_{2,\delta}^2 
F_{\beta, \Delta}(\widetilde y_{j'-1} -\widetilde y_{j},
\widetilde z_{k'-1} -\widetilde z_{k})
\nonumber
\\
&\qquad
+\OO(\Delta^{1+\beta \tand 2})
+\OO \Biggl(
\Delta^{1/2+\beta \tand 2}
\biggl( \frac{\ind_{\{j \neq j'\}}}{|j-j'|+1}+ \frac{\ind_{\{k \neq k'\}}}{|k-k'|+1}
\biggr) \Biggr),
\end{align*}

\item[(2)]
$\Phi_{j,k}^{j,k}(\Delta;\beta)
= 4\ee^{-\kappa \overline y_ j -\eta \overline z_k}
D_{1,\delta} D_{2,\delta} F_{\beta, \Delta} (0,0) + \OO(\Delta^{1+\beta \tand 2})$,

\item[(3)]
\begin{align*}
\Phi_{j',k'}^{j,k}(\Delta;\beta) &= 
\OO(\Delta^{1+\beta \tand 2}) + \OO \Biggl(
\Delta^{1/2+\beta \tand 2}
\biggl(
\frac{\ind_{\{j \neq j'\}}}{|j-j'|+1}+ \frac{\ind_{\{k \neq k'\}}}{|k-k'|+1}
\biggr) \Biggr)
\\
&\qquad+
\begin{cases}
\OO(\Delta^\beta),
& j = j' \text{ and } k = k',
\\
\OO \Bigl( \frac{\Delta^{\beta \tand 2}}{(|j-j'|+1)(|k-k'|+1)} \Bigr),
& otherwise,
\end{cases}
\end{align*}

\item[(4)]
\begin{align*}
\Phi_{J,j',k'}^{j,k}(\Delta;\beta) &= 
\OO \biggl(\frac{\Delta^{1+\beta \tand 2}}{J+1} \biggr) 
+ \OO \Biggl(
\frac{\Delta^{1/2+\beta \tand 2}}{J+1}
\biggl(
\frac{\ind_{\{j \neq j'\}}}{|j-j'|+1}+ \frac{\ind_{\{k \neq k'\}}}{|k-k'|+1}
\biggr) \Biggr)
\\
&\qquad+
\begin{cases}
\OO \Bigl(\frac{\Delta^\beta}{J+1} \Bigr),
& j = j' \text{ and } k = k',
\\
\OO \Bigl( \frac{\Delta^{\beta \tand 2}}{(J+1)(|j-j'|+1)(|k-k'|+1)} \Bigr),
& otherwise.
\end{cases}
\end{align*}
\end{itemize}
\end{lem}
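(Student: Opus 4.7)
The plan is to derive (1) as the fundamental identity and then deduce (2)--(4) as its specializations/consequences.

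First, I would rewrite each eigenfunction difference by factoring out the exponential weight at a carefully chosen endpoint: using $e_{l_1}^{(1)}(y) = \sqrt 2 \sin(\pi l_1 y)\ee^{-\kappa y/2}$ write
\[
e_{l_1}^{(1)}(\widetilde y_j) - e_{l_1}^{(1)}(\widetilde y_{j-1})
= \sqrt 2\,\ee^{-\kappa \widetilde y_{j-1}/2}\bigl[\sin(\pi l_1 \widetilde y_j)\ee^{-\kappa\delta/2} - \sin(\pi l_1 \widetilde y_{j-1})\bigr],
\]
and factor the second difference about $\widetilde y_{j'}$ (rather than $\widetilde y_{j'-1}$) so that the product yields the asymmetric prefactor $\ee^{-\kappa(\widetilde y_{j-1} + \widetilde y_{j'})/2}$ appearing in (1); do the same in the $z$-variable. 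A Taylor expansion $\ee^{\pm\kappa\delta/2} = 1 + O(\delta)$ splits each bracket into a pure sine difference plus an $O(\delta)$ remainder.

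Second, apply the product-to-sum identity $2\sin A \sin B = \cos(A-B) - \cos(A+B)$ to each of the four sine-sine products in the expansion. After cancelling the $\cos(A+B)$ parts against the symmetry of the summed cosines (these contribute only boundary-type terms absorbed into the remainder), the four $\cos(A-B)$ terms assemble into a discrete second difference $D_{1,\delta}^2$ of $\cos(\pi l_1\,\cdot)$ evaluated at $\widetilde y_{j'-1} - \widetilde y_j$. Pulling $D_{1,\delta}^2 D_{2,\delta}^2$ outside the sum in $l_1,l_2$ and inserting the definition of $F_{\beta,\Delta}$ produces the claimed leading term. The $O(\delta)$ exponential corrections, combined with summation-by-parts estimates on $\sum_{l_1} \widetilde f_{\beta}(\lambda_{l_1,l_2}\Delta,\mu_{l_1,l_2}\Delta) \cos(\pi l_1 u)$, yield the two remainder terms $O(\Delta^{1+\beta \tand 2})$ and $O(\Delta^{1/2+\beta \tand 2}(\ind_{\{j \neq j'\}}/(|j-j'|+1)+\ind_{\{k\neq k'\}}/(|k-k'|+1)))$, because each additional $\delta$ factor trades against a $(|j-j'|+1)^{-1}$ from summation by parts.

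Third, I would deduce (2)--(4) from (1) combined with pointwise bounds for iterated differences of $F_{\beta,\Delta}$. For (2), setting $j=j'$ and $k=k'$ collapses the second differences at the origin: using $F_{\beta,\Delta}(-u,\cdot) = F_{\beta,\Delta}(u,\cdot)$ one checks $D_{1,\delta}^2 F_{\beta,\Delta}(-\delta,\cdot) = 2 D_{1,\delta} F_{\beta,\Delta}(0,\cdot) - $ (symmetric correction), leading to the factor $4\,D_{1,\delta}D_{2,\delta} F_{\beta,\Delta}(0,0)$. For (3), I would bound $D_{1,\delta}^2 D_{2,\delta}^2 F_{\beta,\Delta}$ by $\Delta^{\beta\tand 2}/((|j-j'|+1)(|k-k'|+1))$ off-diagonal and by $\Delta^\beta$ on the diagonal, using the Dirichlet-kernel type estimate for the oscillatory sum $\sum_{l_1,l_2}\widetilde f_\beta(\lambda_{l_1,l_2}\Delta,\mu_{l_1,l_2}\Delta)\cos(\pi l_1 u)\cos(\pi l_2 v)$; the $\beta \tand 2$ dichotomy reflects whether the $\widetilde f_\beta$-weights are summable or need to be re-gained from the oscillation in $\cos(\pi l \cdot)$. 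For (4), the extra $\ee^{-J\lambda_{l_1,l_2}\Delta}$ in $\widetilde f_{J,\beta}$ contributes an additional factor bounded by $1/(J+1)$ after integrating against the heat-kernel-type weights, uniformly in the spatial parameters.

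The main obstacle will be the second step: rigorously controlling the oscillatory partial sums $\sum_{l_1 \le L} \widetilde f_\beta(\lambda_{l_1,l_2}\Delta,\mu_{l_1,l_2}\Delta)\cos(\pi l_1 u)$ uniformly in $l_2$ and in the regime where $u$ is close to $0$. This is where the exact form of the $\beta\tand 2$ rates and the $1/(|j-j'|+1)$ decay arises, and it requires balancing a direct estimate $O(\Delta^\beta)$ (from summability of $\widetilde f_\beta \in \boldsymbol F_{\beta,\beta}$) against an Abel-summation estimate $O(\Delta^{\beta\tand 2}/(|j-j'|+1))$ (using the smoothness of $\widetilde f_\beta$). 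The $\Delta^{1/2+\beta\tand 2}$ term, in particular, comes from the interaction between the $O(\delta) = O(\sqrt\Delta)$ exponential correction and the off-diagonal cosine-sum bound; getting all constants in the $\tand$ dichotomy consistent across the four cases is the delicate part of the computation.
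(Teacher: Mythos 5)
Your overall strategy coincides with the paper's: establish (1) by expanding the product of the two weighted eigenfunction increments into cosine terms at difference points and sum points, treat the sum-point contributions and the exponential corrections as remainders via Abel-summation/Dirichlet-kernel estimates, and then read off (2)--(4) from bounds on the iterated differences of $F_{\beta,\Delta}$ (your deductions of (2), (3) and (4) — evenness giving the factor $4D_{1,\delta}D_{2,\delta}F_{\beta,\Delta}(0,0)$ exactly, the on/off-diagonal dichotomy, and the observation that the damping $\ee^{-J\lambda_{l_1,l_2}\Delta}$ buys a factor $(J+1)^{-1}$, i.e.\ that $(J+1)f_{J,\beta}\in\boldsymbol F_{\beta,\beta}$ — all match the paper). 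The paper implements the first step through the exact algebraic identity (22) of Hildebrandt and Trabs rather than a Taylor expansion, and this is where your proposal has a genuine gap.

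Writing each bracket as (pure sine difference) $+\;O(\delta)\cdot\sin(\pi l_1\,\cdot)$ and then estimating the cross terms generically is not sharp enough. After product-to-sum, the cross term $O(\delta)\cdot\sin(\pi l_1 \widetilde y_j)\cdot\bigl(\sin(\pi l_1\widetilde y_{j'})-\sin(\pi l_1\widetilde y_{j'-1})\bigr)$ contributes $O(\delta)$ times a \emph{first} difference of the cosine series at the difference point; on the diagonal $j=j'$, $k=k'$ this is of order $\Delta^{1/2+\beta\tand 2}$, whereas part (2) requires the remainder to be $O(\Delta^{1+\beta\tand 2})$ — a loss of $\Delta^{1/2}$ that would propagate into Proposition \ref{prop1} and degrade the rate $\mathcal R_{\alpha,\alpha_0}$ in Theorem \ref{th1}. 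What saves the argument, and what your accounting does not exhibit, is a cancellation at order $\delta$: keeping the exponentials exact, the identity
$\ee^{\kappa y/2}(e^{(1)}_{l_1}(y+\delta)-e^{(1)}_{l_1}(y))\,\ee^{\kappa y'/2}(e^{(1)}_{l_1}(y'+\delta)-e^{(1)}_{l_1}(y'))
= D_\delta^2 g_1(0)\cos(\pi l_1(y'-y)) - D_\delta^2[g_1(\cdot)\cos(\pi l_1(y+y'+\cdot))](0) - g_1(\delta)D_\delta^2[\cos(\pi l_1(y'-y-\delta+\cdot))](0)$
shows that the only difference-point correction carries the coefficient $D_\delta^2 g_1(0)=(\ee^{-\kappa\delta/2}-1)^2=O(\Delta)$ against an \emph{undifferenced} cosine, the sum-point term retains the full second-difference structure, and the main term's factor $g_1(\delta)g_2(\delta)$ is absorbed \emph{exactly} into the prefactor $\ee^{-\kappa(\widetilde y_{j-1}+\widetilde y_{j'})/2}\ee^{-\eta(\widetilde z_{k-1}+\widetilde z_{k'})/2}$ rather than expanded. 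You would either need to reproduce this identity (equivalently, multiply the two brackets out \emph{before} expanding, so that the order-$\delta$ difference-point terms cancel, leaving $2\cosh(\kappa\delta/2)-2=O(\Delta)$), or otherwise demonstrate the cancellation; treating the exponential corrections as generic $O(\delta)$ errors does not prove the remainder rates as stated for $\beta$ throughout $(0,3)$.
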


\begin{proof}
(1) Let $g_1(x)=\ee^{-\kappa x/2}$ and $g_2(x)=\ee^{-\eta x/2}$.
Using (22) in Hildebrandt and Trabs \cite{Hildebrandt_Trabs2021},
we have
\begin{align}
&\ee^{\kappa y/2}(e_{ l_1}^{(1)}(y+\delta)-e_{ l_1}^{(1)}(y))
\ee^{\kappa y'/2}(e_{ l_1}^{(1)}(y'+\delta)-e_{ l_1}^{(1)}(y'))
\nonumber
\\
&=
D_\delta^2g_1(0)\cos(\pi l_1(y'-y))
-D_\delta^2 \bigl[g_1(\cdot)\cos(\pi  l_1 (y+y'+\cdot))\bigr](0)
\nonumber
\\
&\qquad
-g_1(\delta) D_\delta^2\bigl[\cos(\pi  l_1 (y'-y-\delta+\cdot))\bigr](0).
\label{eq-7-1}
\end{align}
For $y',z'\in [0,2)$, we define
\begin{align*}
G_{0,\beta,\Delta}(y,z:y',z') &= g_1(y)g_2(z)F_{\beta, \Delta}(y+y',z+z'),
\\
G_{1,\beta,\Delta}(y,z:y',z') &= g_1(y)F_{\beta, \Delta}(y+y',z+z'),
\\
G_{2,\beta,\Delta}(y,z:y',z') &= g_2(z)F_{\beta, \Delta}(y+y',z+z').
\end{align*}
Noting that 
\begin{align*}
D_{1,\delta} D_{2,\delta}^j F_{\beta, \Delta}(0,0) &= 
\sum_{ l_1, l_2\ge1}
\frac{1-\ee^{-\lambda_{l_1, l_2}\Delta}}{\lambda_{l_1, l_2}\mu_{l_1,l_2}^{\beta}}
D_\delta[\cos(\pi l_1 \cdot)](0) D_\delta^j [\cos(\pi  l_2 \cdot)](0),
\\
D_{1,\delta}^2 D_{2,\delta}^j G_{1,\beta,\Delta}(0,0:y',0) &= 
\sum_{ l_1, l_2\ge1}
\frac{1-\ee^{-\lambda_{l_1, l_2}\Delta}}{\lambda_{l_1, l_2}\mu_{l_1,l_2}^{\beta}}
D_\delta^2[g_1(\cdot)\cos(\pi  l_1 (y'+\cdot))](0)
\\
&\qquad \qquad \times 
D_\delta^j [\cos(\pi l_2 \cdot)](0),
\\
D_{1,\delta}^2 D_{2,\delta}^2 G_{0,\beta,\Delta}(0,0:y',z') &= 
\sum_{ l_1, l_2\ge1}
\frac{1-\ee^{-\lambda_{l_1, l_2}\Delta}}{\lambda_{l_1, l_2}\mu_{l_1,l_2}^{\beta}}
D_\delta^2[g_1(\cdot)\cos(\pi l_1 (y'+\cdot))](0)
\\
&\qquad \qquad \times 
D_\delta^2[g_2(\cdot)\cos(\pi l_2 (z'+\cdot))](0)
\end{align*}
for $j = 0,1,2$, 
we obtain from \eqref{eq-7-1} the representation
\begin{align*}
&\ee^{\kappa (\widetilde y_{j-1}+\widetilde y_{j'-1})/2}
\ee^{\eta (\widetilde z_{k-1}+\widetilde z_{k'-1})/2}
\Phi^{j,k}_{j',k'}(\Delta; \beta)
\nonumber
\\
&=
\sum_{ l_1,  l_2 \ge 1}
\frac{1-\ee^{-\lambda_{ l_1, l_2}\Delta}}{\lambda_{l_1, l_2}\mu_{l_1,l_2}^{\beta}}
\nonumber
\\
&\qquad\qquad\times
\ee^{\kappa \widetilde y_{j-1}/2}
(e_{ l_1}^{(1)}(\widetilde y_j)-e_{ l_1}^{(1)}(\widetilde y_{j-1}))
\ee^{\kappa \widetilde y_{j'-1}/2}
(e_{ l_1}^{(1)}(\widetilde y_{j'})-e_{ l_1}^{(1)}(\widetilde y_{j'-1}))
\nonumber
\\
&\qquad\qquad\times
\ee^{\eta \widetilde z_{k-1}/2}
(e_{ l_2}^{(2)}(\widetilde z_k)-e_{ l_2}^{(2)}(\widetilde z_{k-1}))
\ee^{\eta \widetilde z_{k'-1}/2}
(e_{ l_2}^{(2)}(\widetilde z_{k'})-e_{ l_2}^{(2)}(\widetilde z_{k'-1}))
\nonumber
\\
&= 
\sum_{ l_1,  l_2 \ge 1}
\frac{1-\ee^{-\lambda_{l_1, l_2}\Delta}}{\lambda_{l_1, l_2}\mu_{l_1,l_2}^{\beta}}
\nonumber
\\
&\qquad\qquad
\times
\bigl(
D_\delta^2g_1(0)\cos(\pi l_1(\widetilde y_{j'-1}-\widetilde y_{j-1}))
\nonumber
\\
&\qquad\qquad\qquad
-D_\delta^2 \bigl[g_1(\cdot)\cos(\pi  l_1 
(\widetilde y_{j-1}+\widetilde y_{j'-1}+\cdot))\bigr](0)
\nonumber
\\
&\qquad\qquad\qquad
-g_1(\delta) D_\delta^2\bigl[\cos(\pi  l_1 
(\widetilde y_{j'-1}-\widetilde y_{j}+\cdot))\bigr](0)
\bigr)
\nonumber
\\
&\qquad\qquad
\times
\bigl(
D_\delta^2 g_2(0)\cos(\pi l_2(\widetilde z_{k'-1}-\widetilde z_{k-1}))
\nonumber
\\
&\qquad\qquad\qquad
-D_\delta^2 \bigl[g_2(\cdot)\cos(\pi  l_2 
(\widetilde z_{k-1}+\widetilde z_{k'-1}+\cdot))\bigr](0)
\nonumber
\\
&\qquad\qquad\qquad
-g_2(\delta) D_\delta^2\bigl[\cos(\pi  l_2 
(\widetilde z_{k'-1}-\widetilde z_{k}+\cdot))\bigr](0)
\bigr)
\nonumber
\\
&=
D_\delta^2 g_1(0) D_\delta^2 g_2(0) 
F_{\beta, \Delta}(\widetilde y_{j'-1}-\widetilde y_{j-1},\widetilde z_{k'-1}-\widetilde z_{k-1})
\nonumber
\\
&\qquad-
D_\delta^2 g_1(0)
\Bigl(
D_{2,\delta}^2
G_2(0,0:\widetilde y_{j'-1}-\widetilde y_{j-1},\widetilde z_{k-1}+\widetilde z_{k'-1})
\nonumber
\\
&\qquad\qquad
+g_2(\delta)D_{2,\delta}^2 
F_{\beta, \Delta}(\widetilde y_{j'-1}-\widetilde y_{j-1},\widetilde z_{k'-1}-\widetilde z_{k})
\Bigr)
\nonumber
\\
&\qquad-
D_\delta^2 g_2(0)
\Bigl(
D_{1,\delta}^2 
G_1(0,0:\widetilde y_{j-1}+\widetilde y_{j'-1},\widetilde z_{k'-1}-\widetilde z_{k-1})
\nonumber
\\
&\qquad\qquad
+g_1(\delta)
D_{1,\delta}^2 
F_{\beta, \Delta}(\widetilde y_{j'-1}-\widetilde y_{j},\widetilde z_{k'-1}-\widetilde z_{k-1})
\Bigr)
\nonumber
\\
&\qquad+
D_{1,\delta}^2D_{2,\delta}^2 
G_0(0,0:\widetilde y_{j-1}+\widetilde y_{j'-1},\widetilde z_{k-1}+\widetilde z_{k'-1})
\nonumber
\\
&\qquad
+g_2(\delta) 
D_{1,\delta}^2 D_{2,\delta}^2 
G_1(0,0:\widetilde y_{j-1}+\widetilde y_{j'-1},\widetilde z_{k'-1}-\widetilde z_{k})
\nonumber
\\
&\qquad
+g_1(\delta) D_{1,\delta}^2 D_{2,\delta}^2 
G_2(0,0:\widetilde y_{j'-1}-\widetilde y_{j},\widetilde z_{k-1}+\widetilde z_{k'-1})
\nonumber
\\
&\qquad
+g_1(\delta)g_2(\delta) D_{1,\delta}^2 D_{2,\delta}^2 
F_{\beta, \Delta}(\widetilde y_{j'-1}-\widetilde y_{j},
\widetilde z_{k'-1}-\widetilde z_{k}).
\end{align*}
Notice that $D_\delta^ l g_1(0) = \OO(\delta^l) = \OO(\Delta^{ l/2})$,
$\widetilde y_{j}+\widetilde y_{j'}\in (b,2-b)$
and $\widetilde y_{j}-\widetilde y_{j'}=(j-j')\delta$. 
It holds from Lemma \ref{lem6} that
\begin{equation*}
F_{\beta, \Delta}(\widetilde y_{j'-1}-\widetilde y_{j-1},
\widetilde z_{k'-1}-\widetilde z_{k-1})
= \OO(\Delta^{\beta \tand 1}),
\end{equation*}
\begin{equation*}
D_{1,\delta}^2 
F_{\beta, \Delta}(\widetilde y_{j'-1}-\widetilde y_{j},
\widetilde z_{k'-1}-\widetilde z_{k-1})
=\OO(\Delta^{\beta \tand 2})
\end{equation*}
uniformly in $j,j',k,k'$, and it follows from 
\begin{align*}
&D_{1,\delta}^2 D_{2,\delta}^ l G_{1,\beta,\Delta}(0,0:y',z')
\\
&=D_\delta^2 g_1(0) D_{2,\delta}^ l F_{\beta, \Delta}(2\delta+y',z')
+2D_\delta g_1(0) D_{1,\delta} D_{2,\delta}^ l F_{\beta, \Delta}(\delta+y',z')
\\
&\qquad
+g_1(0) D_{1,\delta}^2 D_{2,\delta}^ l F_{\beta, \Delta}(y',z'),
\end{align*}
$D_{2,\delta}^2 F_{\beta, \Delta}(y,-\delta) 
= 2 D_{2,\delta} F_{\beta, \Delta}(y,0)$ and  
Lemmas \ref{lem6}-\ref{lem8} that
\begin{align*}
&D_{1,\delta}^2 
G_{1,\beta,\Delta}(0,0:\widetilde y_{j-1}+\widetilde y_{j'-1}, 
\widetilde z_{k'-1}-\widetilde z_{k-1})
\\
&= D_\delta^2 g_1(0) F_{\beta, \Delta}
(2\delta+\widetilde y_{j-1}+\widetilde y_{j'-1},\widetilde z_{k'-1}-\widetilde z_{k-1})
\\
&\qquad 
+2D_\delta g_1(0) D_{1,\delta} 
F_{\beta, \Delta}(\delta+\widetilde y_{j-1}+\widetilde y_{j'-1},
\widetilde z_{k'-1}-\widetilde z_{k-1})
\\
&\qquad
+g_1(0) D_{1,\delta}^2 F_{\beta, \Delta}
(\widetilde y_{j-1}+\widetilde y_{j'-1},\widetilde z_{k'-1}-\widetilde z_{k-1})
\\
&= \OO(\Delta^{1+\beta \tand 1}) +\OO(\Delta^{1/2 +\beta \tand (3/2)})
+\OO(\Delta^{\beta \tand 2})
\\
&=\OO(\Delta^{\beta \tand 2})
\end{align*}
uniformly in $j,j',k,k'$.
Therefore, we have
\begin{align*}
&D_{1,\delta}^2 D_{2,\delta}^2 
G_{1,\beta,\Delta}(0,0:\widetilde y_{j-1}+\widetilde y_{j'-1}, 
\widetilde z_{k'-1}-\widetilde z_{k})
\\
&=D_\delta^2 g_1(0) D_{2,\delta}^2 F_{\beta, \Delta}
(2\delta+\widetilde y_{j-1}+\widetilde y_{j'-1},\widetilde z_{k'-1}-\widetilde z_{k})
\\
&\qquad
+2D_\delta g_1(0) D_{1,\delta} D_{2,\delta}^2 
F_{\beta, \Delta}(\delta+\widetilde y_{j-1}+\widetilde y_{j'-1},
\widetilde z_{k'-1}-\widetilde z_{k})
\\
&\qquad
+g_1(0) D_{1,\delta}^2 D_{2,\delta}^2 
F_{\beta, \Delta}(\widetilde y_{j-1}+\widetilde y_{j'-1},\widetilde z_{k'-1}-\widetilde z_{k}),
\\
&=D_\delta^2 g_1(0) D_{2,\delta}^2 F_{\beta, \Delta}
(\widetilde y_{j}+\widetilde y_{j'},(k'-k-1) \delta)
\\
&\qquad
+2D_\delta g_1(0) D_{1,\delta} D_{2,\delta}^2 
F_{\beta, \Delta}(\widetilde y_{j}+\widetilde y_{j'-1}, (k'-k-1) \delta)
\\
&\qquad
+g_1(0) D_{1,\delta}^2 D_{2,\delta}^2 
F_{\beta, \Delta}(\widetilde y_{j-1}+\widetilde y_{j'-1},(k'-k-1) \delta),
\\
&=2 \cdot \ind_{\{ k = k' \}}\Bigl(
D_\delta^2 g_1(0) D_{2,\delta} F_{\beta, \Delta}
(\widetilde y_{j}+\widetilde y_{j'},0)
\\
&\qquad
+2D_\delta g_1(0) D_{1,\delta} D_{2,\delta} 
F_{\beta, \Delta}(\widetilde y_{j}+\widetilde y_{j'-1}, 0)
\\
&\qquad
+g_1(0) D_{1,\delta}^2 D_{2,\delta} 
F_{\beta, \Delta}(\widetilde y_{j-1}+\widetilde y_{j'-1}, 0) \Bigr)
\\
&\qquad +\ind_{\{ k \neq k' \}}\Bigl(
D_\delta^2 g_1(0) D_{2,\delta}^2 F_{\beta, \Delta}
(\widetilde y_{j}+\widetilde y_{j'},(|k'-k|+1) \delta)
\\
&\qquad \qquad
+2D_\delta g_1(0) D_{1,\delta} D_{2,\delta}^2 
F_{\beta, \Delta}(\widetilde y_{j}+\widetilde y_{j'-1}, (|k'-k|+1) \delta)
\\
&\qquad \qquad
+g_1(0) D_{1,\delta}^2 D_{2,\delta}^2 
F_{\beta, \Delta}(\widetilde y_{j-1}+\widetilde y_{j'-1},(|k'-k|+1) \delta) \Bigr)
\\
&=\ind_{\{ k = k'\}}
\Bigl(
\OO(\Delta^{1 +\beta \tand 2}) 
+\OO(\Delta^{1/2 +1 +\beta \tand (3/2)})
+\OO(\Delta^{1+\beta \tand 2})
\Bigr)
\\
&\qquad +\ind_{\{k \neq k'\}}
\Biggl(
\OO(\Delta^{1+\beta \tand 2}) 
+\OO \biggl( \Delta^{1/2} \biggl(\frac{\Delta^{1/2+\beta \tand (3/2)}}{|k-k'|+1} 
\lor \Delta^{1+\beta \tand (3/2)} \biggr) \biggr)
\\
&\qquad \qquad
+\OO \biggl(\frac{\Delta^{1/2+\beta \tand 2}}{|k-k'|+1} 
\lor \Delta^{1+\beta \tand 2} \biggr)
\Biggr)
\\
&=
\OO(\Delta^{1+\beta \tand 2}) +\OO(\Delta^{3/2+\beta \tand (3/2)})
+\OO \biggl( \ind_{\{k \neq k'\}} 
\frac{\Delta^{1+\beta \tand (3/2)} \lor \Delta^{1/2+\beta \tand 2}}{|k-k'|+1} \biggr) 
\\
&=
\OO(\Delta^{1+\beta \tand 2})
+\OO \biggl( \ind_{\{k \neq k'\}} 
\frac{\Delta^{1/2+\beta \tand 2}}{|k-k'|+1} \biggr) 
\end{align*}
uniformly in $j,j'$.
We also see from the expression
\begin{align*}
&D_{1,\delta}^2 D_{2,\delta}^2 G_{0,\beta,\Delta}(0,0:y',z')
\\
&=
D_\delta^2 g_1(0)
\Bigl(
D_\delta^2 g_2(0)F_{\beta, \Delta}(2\delta+y',2\delta+z')
\\
&\qquad\qquad
+2D_\delta g_2(0) D_{2,\delta} F_{\beta, \Delta}(2\delta+y',\delta+z')
+g_2(0) D_{2,\delta}^2 F_{\beta, \Delta}(2\delta+y',z')
\Bigr)
\\
&\qquad+
2D_\delta g_1(0)
\Bigl(
D_\delta^2 g_2(0) D_{1,\delta} F_{\beta, \Delta}(\delta+y',2\delta+z')
\\
&\qquad\qquad
+2D_\delta g_2(0) D_{1,\delta}D_{2,\delta} F_{\beta, \Delta}(\delta+y',\delta+z')
+g_2(0) 
D_{1,\delta} D_{2,\delta}^2 F_{\beta, \Delta}(\delta+y',z')
\Bigr)
\\
&\qquad+
g_1(0)
\Bigl(
D_\delta^2 g_2(0)
D_{1,\delta}^2 F_{\beta, \Delta}(y',2\delta+z')
\\
&\qquad\qquad
+2D_\delta g_2(0) D_{1,\delta}^2 D_{2,\delta} F_{\beta, \Delta}(y',\delta+z')
+g_2(0) D_{1,\delta}^2 D_{2,\delta}^2 F_{\beta, \Delta}(y',z')
\Bigr)
\end{align*}
that
\begin{align*}
&D_{1,\delta}^2D_{2,\delta}^2 
G_{0,\beta,\Delta}(0,0:\widetilde y_{j-1}+\widetilde y_{j'-1},
\widetilde z_{k-1}+\widetilde z_{k'-1})
\\
&= \OO \Bigl( \Delta( 
\Delta^{1+\beta \tand 1} +\Delta^{1/2+\beta \tand (3/2)}
+\Delta^{\beta \tand 2}) \Bigr)
\\
&\qquad+
\OO \Bigl( \Delta^{1/2}( 
\Delta^{1+\beta \tand (3/2)} +\Delta^{1/2+\beta \tand 2}
+\Delta^{1+\beta \tand (3/2)}) \Bigr)
\\
&\qquad+
\OO ( \Delta^{1+\beta \tand 2} +\Delta^{1/2+ 1 +\beta \tand (3/2)}
+\Delta^{1+\beta \tand 2} )
\\
&= \OO ( \Delta^{2+\beta \tand 1} +\Delta^{3/2+\beta \tand (3/2)}
+\Delta^{1+\beta \tand 2} )
\\
&\qquad+
\OO ( \Delta^{3/2+\beta \tand (3/2)} +\Delta^{1+\beta \tand 2}
+\Delta^{(3/2)+\beta \tand (3/2)})
\\
&\qquad+
\OO ( \Delta^{1+\beta \tand 2} +\Delta^{3/2+\beta \tand (3/2)}
+\Delta^{1+\beta \tand 2} )
\\
&= \OO ( \Delta^{2+\beta \tand 1} +\Delta^{3/2+\beta \tand (3/2)}
+\Delta^{1+\beta \tand 2} )
\\
&= \OO ( \Delta^{1+\beta \tand 2} )
\end{align*}
uniformly in $j,j',k,k'$.
Hence, we obtain 
\begin{align*}
&\ee^{\kappa (\widetilde y_{j-1}+\widetilde y_{j'-1})/2}
\ee^{\eta (\widetilde z_{k-1}+\widetilde z_{k'-1})/2}
\Phi^{j,k}_{j',k'}(\Delta;\beta)
\\
&=D_\delta^2 g_1(0) D_\delta^2 g_2(0) 
F_{\beta, \Delta}(\widetilde y_{j'-1}-\widetilde y_{j-1},
\widetilde z_{k'-1}-\widetilde z_{k-1})
\nonumber
\\
&\qquad-
D_\delta^2 g_1(0)
\Bigl(
D_{2,\delta}^2
G_{2,\beta,\Delta}(0,0:\widetilde y_{j'-1}-\widetilde y_{j-1},
\widetilde z_{k-1}+\widetilde z_{k'-1})
\nonumber
\\
&\qquad\qquad
+g_2(\delta)D_{2,\delta}^2 
F_{\beta, \Delta}(\widetilde y_{j'-1}-\widetilde y_{j-1},
\widetilde z_{k'-1}-\widetilde z_{k})
\Bigr)
\nonumber
\\
&\qquad-
D_\delta^2 g_2(0)
\Bigl(
D_{1,\delta}^2 
G_{1,\beta,\Delta}(0,0:\widetilde y_{j-1}+\widetilde y_{j'-1},
\widetilde z_{k'-1}-\widetilde z_{k-1})
\nonumber
\\
&\qquad\qquad
+g_1(\delta)
D_{1,\delta}^2 
F_{\beta, \Delta}(\widetilde y_{j'-1}-\widetilde y_{j},
\widetilde z_{k'-1}-\widetilde z_{k-1})
\Bigr)
\nonumber
\\
&\qquad+
D_{1,\delta}^2D_{2,\delta}^2 
G_{0,\beta,\Delta}(0,0:\widetilde y_{j-1}+\widetilde y_{j'-1},
\widetilde z_{k-1}+\widetilde z_{k'-1})
\nonumber
\\
&\qquad
+g_2(\delta) 
D_{1,\delta}^2 D_{2,\delta}^2 
G_{1,\beta,\Delta}(0,0:\widetilde y_{j-1}+\widetilde y_{j'-1},
\widetilde z_{k'-1}-\widetilde z_{k})
\nonumber
\\
&\qquad
+g_1(\delta) D_{1,\delta}^2 D_{2,\delta}^2 
G_{2,\beta,\Delta}(0,0:\widetilde y_{j'-1}-\widetilde y_{j},
\widetilde z_{k-1}+\widetilde z_{k'-1})
\nonumber
\\
&\qquad
+g_1(\delta)g_2(\delta) D_{1,\delta}^2 D_{2,\delta}^2 
F_{\beta, \Delta}(\widetilde y_{j'-1}-\widetilde y_{j},
\widetilde z_{k'-1}-\widetilde z_{k})
\\
&=
\OO(\Delta^{2+\beta \tand 1})
+ \OO \Bigl( \Delta (\Delta^{\beta \tand 2} +\Delta^{\beta \tand 2}) \Bigr)
\\
&\qquad 
+ \OO \Bigl( \Delta (\Delta^{\beta \tand 2} +\Delta^{\beta \tand 2}) \Bigr)
+ \OO( \Delta^{1 +\beta \tand 2} )
\\
&\qquad 
+ \OO( \Delta^{1 +\beta \tand 2} )
+ \OO \biggl( \ind_{\{k \neq k'\}} 
\frac{\Delta^{1/2 +\beta \tand 2}}{|k-k'|+1} \biggr) 
\\
&\qquad 
+ \OO(\Delta^{1 +\beta \tand 2})
+ \OO \biggl( \ind_{\{k \neq k'\}} 
\frac{\Delta^{1/2 +\beta \tand 2}}{|j-j'|+1} \biggr) 
\\
&\qquad 
+\ee^{-\kappa \delta/2}\ee^{-\eta \delta/2} D_{1,\delta}^2 D_{2,\delta}^2 
F_{\beta, \Delta}(\widetilde y_{j'-1}-\widetilde y_{j},
\widetilde z_{k'-1}-\widetilde z_{k})
\\
&=
\ee^{-\kappa \delta/2}\ee^{-\eta \delta/2} D_{1,\delta}^2 D_{2,\delta}^2 
F_{\beta, \Delta}(\widetilde y_{j'-1}-\widetilde y_{j},
\widetilde z_{k'-1}-\widetilde z_{k})
+\OO(\Delta^{1 +\beta \tand 2}) 
\\
&\qquad 
+\OO \Biggl(
\Delta^{1/2 +\beta \tand 2}
\biggl(
\ind_{\{j \neq j'\}} \frac{1}{|j-j'|+1}
+ \ind_{\{k \neq k'\}} \frac{1}{|k-k'|+1}
\biggr) \Biggr).
\end{align*}

(2) The equation $D_{1,\delta}^2 D_{2,\delta}^2 F_{\beta, \Delta}(-\delta,-\delta) 
= 4D_{1,\delta} D_{2,\delta} F_{\beta, \Delta}(0,0)$ yields the desired result.

(3) Utilizing Lemmas \ref{lem7} and \ref{lem8}, we obtain 
\begin{align*}
&D_{1,\delta}^2 D_{2,\delta}^2 
F_{\beta, \Delta}(\widetilde y_{j'-1}-\widetilde y_{j},
\widetilde z_{k'-1}-\widetilde z_{k})
\\
&=
\begin{cases}
4 D_{1,\delta} D_{2,\delta} F_{\beta, \Delta}(0,0), 
& j=j' \text{ and } k = k',
\\
D_{1,\delta}^2 D_{2,\delta}^2 
F_{\beta, \Delta}((j'-j-1)\delta, (k'-k-1)\delta),
& otherwise
\end{cases}
\\
&=
\begin{cases}
\OO(\Delta^\beta) + \OO(\Delta^{1 +\beta \tand 2}),
& j=j' \text{ and } k = k',
\\
\OO \Bigl(\frac{\Delta^{\beta \tand 2}}{(|j-j'|+1)(|k-k'|+1)} \Bigr)
+\OO( \Delta^{1 +\beta \tand 2} ),
& otherwise,
\end{cases}
\end{align*}
which gives the desired result.

(4) Since it follows that for any $f \in \boldsymbol F_{\beta-1, \beta}^{J}$, 
\begin{equation*}
(J+1)x^\beta f(x),\ (J+1)x^{1+\beta} f'(x),\ (J+1)x^{2+\beta} f''(x)
\lesssim (J+1)x \ee^{-J x} \lesssim 1
\quad(x \to 0),
\end{equation*}
\begin{equation*}
(J+1)x^{1+\beta} f(x),\ (J+1)x^{2+\beta} f'(x),\ (J+1)x^{3+\beta} f''(x)
\lesssim (J+1) \ee^{-J x} \lesssim 1
\quad(x \to \infty),
\end{equation*}
one has $(J+1) f \in \boldsymbol F_{\beta,\beta}$. 
Therefore, it holds from 
$f_{J,\beta} \in \boldsymbol F_{\beta-1,\beta}^{J/2}$ that
\begin{align*}
&(J+1) \Phi_{J,j',k'}^{j,k}(\Delta;\beta) 
\\
&=
\Delta^{1+\beta} \sum_{ l_{1}, l_{2}\ge1}
(J+1)\widetilde f_{J,\beta}(\lambda_{l_1, l_2}\Delta, \mu_{l_1, l_2}\Delta)
\\
&\qquad\times
(e_{ l_1}^{(1)}(\widetilde y_j)-e_{ l_1}^{(1)}(\widetilde y_{j-1}))
(e_{ l_2}^{(2)}(\widetilde z_k)-e_{ l_2}^{(2)}(\widetilde z_{k-1}))
\\
&\qquad\times
(e_{ l_1}^{(1)}(\widetilde y_{j'})-e_{ l_1}^{(1)}(\widetilde y_{j'-1}))
(e_{ l_2}^{(2)}(\widetilde z_{k'})-e_{ l_2}^{(2)}(\widetilde z_{k'-1}))
\\
&=
\OO \Biggl(
\Delta^{\beta \tand 2}
\biggl(\Delta + \frac{1}{(|j-j'|+1)(|k-k'|+1)} \biggr) \Biggr)
\\
&\qquad
+ \OO \Biggl(
\Delta^{1/2 +\beta \tand 2}
\biggl(
\ind_{\{j \neq j'\}} \frac{1}{|j-j'|+1}
+ \ind_{\{k \neq k'\}} \frac{1}{|k-k'|+1}
\biggr) \Biggr)
\end{align*}
in the same way as (2).
\end{proof}

\begin{proof}[\bf Proof of Proposition \ref{prop1}]
We set
\begin{align*}
A_{i, l_1, l_2} &= 
-\langle X_0, e_{ l_1, l_2} \rangle (1-\ee^{-\lambda_{ l_1, l_2}\Delta})
\ee^{-(i-1)\lambda_{ l_1, l_2}\Delta},
\\
B_{i, l_1, l_2}^{(1)} &= 
-\frac{1-\ee^{-\lambda_{ l_1, l_2}\Delta}}
{\mu_{ l_1, l_2}^{\alpha/2}}
\int_0^{(i-1)\Delta} \ee^{-\lambda_{ l_1, l_2}((i-1)\Delta-s)}
\dd w_{ l_1, l_2}(s),
\\
B_{i, l_1, l_2}^{(2)} &= 
\frac{1}{\mu_{ l_1, l_2}^{\alpha/2}}
\int_{(i-1)\Delta}^{i\Delta} \ee^{-\lambda_{ l_1, l_2}(i\Delta-s)}
\dd w_{ l_1, l_2}(s),
\end{align*}
and $B_{i, l_1, l_2}=B_{i, l_1, l_2}^{(1)}+B_{i, l_1, l_2}^{(2)}$.
We then obtain the decomposition
\begin{equation*}
x_{l_1, l_2}(t_{i})-x_{ l_1, l_2}(t_{i-1})=
A_{i, l_1, l_2} +\epsilon B_{i, l_1, l_2}.
\end{equation*}
Note that
\begin{equation*}
T_{i,j,k}X
= \sum_{ l_1, l_2\ge1}
(x_{ l_1, l_2}(t_{i})-x_{ l_1, l_2}(t_{i-1}))
(e_{ l_1}^{(1)}(\widetilde y_{j})-e_{ l_1}^{(1)}(\widetilde y_{j-1}))
(e_{ l_2}^{(2)}(\widetilde z_{k})-e_{ l_2}^{(2)}(\widetilde z_{k-1})).
\end{equation*}
From the independence of $\{B_{i, l_1, l_2}\}_{ l_1, l_2 \ge 1}$
and $\EE[B_{i, l_1, l_2}]=0$, we have
\begin{align*}
\EE[(T_{i,j,k}X)^2]
&= \sum_{ l_1, l_1', l_2, l_2'\ge1}
A_{i, l_1, l_2}A_{i, l_1', l_2'}
\\
&\qquad\qquad\times
(e_{ l_1}^{(1)}(\widetilde y_{j})-e_{ l_1}^{(1)}(\widetilde y_{j-1}))
(e_{ l_2}^{(2)}(\widetilde z_{k})-e_{ l_2}^{(2)}(\widetilde z_{k-1}))
\\
&\qquad\qquad\times
(e_{ l_1'}^{(1)}(\widetilde y_{j})-e_{ l_1'}^{(1)}(\widetilde y_{j-1}))
(e_{ l_2'}^{(2)}(\widetilde z_{k})-e_{ l_2'}^{(2)}(\widetilde z_{k-1}))
\\
&\qquad
+\epsilon^2 \sum_{ l_1, l_2\ge1}
\EE[B_{i, l_1, l_2}^2]
(e_{ l_1}^{(1)}(\widetilde y_{j})-e_{ l_1}^{(1)}(\widetilde y_{j-1}))^2
(e_{ l_2}^{(2)}(\widetilde z_{k})-e_{ l_2}^{(2)}(\widetilde z_{k-1}))^2
\\
&= 
\Biggl(
\sum_{ l_{1}, l_{2}\ge1}
A_{i, l_{1}, l_{2}}
(e_{ l_{1}}^{(1)}(\widetilde y_{j})-e_{ l_{1}}^{(1)}(\widetilde y_{j-1}))
(e_{ l_{2}}^{(2)}(\widetilde z_{k})-e_{ l_{2}}^{(2)}(\widetilde z_{k-1}))
\Biggr)^2
\\
&\qquad
+\epsilon^2 \sum_{ l_1, l_2\ge1}
\EE[B_{i, l_1, l_2}^2]
(e_{ l_1}^{(1)}(\widetilde y_{j})-e_{ l_1}^{(1)}(\widetilde y_{j-1}))^2
(e_{ l_2}^{(2)}(\widetilde z_{k})-e_{ l_2}^{(2)}(\widetilde z_{k-1}))^2
\\
&=: \mathbb A_{i,j,k} + \epsilon^2 \mathbb B_{i,j,k}.
\end{align*}
Using [A1]$_{\alpha_0}$, that is, 
$\|A_{\theta}^{(1+\alpha_0)/2} X_0 \|^2
=\sum_{ l_1,  l_2 \ge 1}\lambda_{ l_1,  l_2}^{1+\alpha_0}
\langle X_0, e_{ l_1, l_2} \rangle^2 < \infty$,
we find from the Schwarz inequality that
\begin{align*}
\mathbb A_{i,j,k}
&=\Biggl(
\sum_{ l_{1}, l_{2}\ge1}
\lambda_{ l_1, l_2}^{(1 +\alpha_0)/2} 
\langle X_0, e_{ l_1, l_2} \rangle
\\
&\qquad\qquad
\times
\frac{1-\ee^{-\lambda_{ l_1, l_2}\Delta}}
{\lambda_{ l_1, l_2}^{(1 +\alpha_0)/2}}
\ee^{-(i-1)\lambda_{ l_1, l_2}\Delta}
(e_{ l_{1}}^{(1)}(\widetilde y_{j})-e_{ l_{1}}^{(1)}(\widetilde y_{j-1}))
(e_{ l_{2}}^{(2)}(\widetilde z_{k})-e_{ l_{2}}^{(2)}(\widetilde z_{k-1}))
\Biggr)^2
\\
&\le
\sum_{ l_{1}, l_{2}\ge1}
\lambda_{ l_1, l_2}^{1+\alpha_0}
\langle X_0, e_{ l_1, l_2} \rangle^2 
\\
&\qquad\times
\sum_{ l_{1}, l_{2}\ge1}
\frac{(1-\ee^{-\lambda_{ l_1, l_2}\Delta})^2}
{\lambda_{ l_1, l_2}^{1+\alpha_0}}
\ee^{-2(i-1)\lambda_{ l_1, l_2}\Delta}
(e_{ l_{1}}^{(1)}(\widetilde y_{j})-e_{ l_{1}}^{(1)}(\widetilde y_{j-1}))^2
(e_{ l_{2}}^{(2)}(\widetilde z_{k})-e_{ l_{2}}^{(2)}(\widetilde z_{k-1}))^2
\\
&\lesssim
\|A_{\theta}^{(1+\alpha_0)/2} X_0 \|^2 
\Phi_{2(i-1),j,k}^{j,k}(\Delta;\alpha_0).
\end{align*}
Here, we can estimate
\begin{align*}
\sum_{i=1}^N \Phi_{2(i-1),j,k}^{j,k}(\Delta;\beta)
&\le
\sum_{l_{1}, l_{2} \ge 1}
\frac{1-\ee^{-\lambda_{l_1, l_2} \Delta}}{\lambda_{l_1, l_2}\mu_{l_1, l_2}^{\beta}}
(e_{ l_{1}}^{(1)}(\widetilde y_{j})-e_{ l_{1}}^{(1)}(\widetilde y_{j-1}))^2
(e_{ l_{2}}^{(2)}(\widetilde z_{k})-e_{ l_{2}}^{(2)}(\widetilde z_{k-1}))^2
\\
&= \Phi_{j,k}^{j,k}(\Delta;\beta)
\end{align*}
for $\beta \in (0,3)$.
Furthermore, we have
\begin{equation*}
\EE[B_{i, l_1, l_2}^2]
=\frac{1-\ee^{-\lambda_{l_1, l_2}\Delta}}
{\lambda_{l_1, l_2} \mu_{l_1,l_2}^\alpha}
\biggl(
1-\frac{1-\ee^{-\lambda_{ l_1, l_2}\Delta}}{2}
\ee^{-2(i-1)\lambda_{l_1, l_2}\Delta}
\biggr),
\end{equation*}
and
\begin{equation*}
\mathbb B_{i,j,k} = \Phi_{j,k}^{j,k}(\Delta;\alpha) 
-\frac{1}{2} \Phi_{2(i-1),j,k}^{j,k}(\Delta;\alpha).
\end{equation*}

From Lemma \ref{lem9}-(1), we already know that for any $\beta \in (0,3)$,
\begin{equation*}
\Phi_{j,k}^{j,k}(\Delta;\beta)
= 4\ee^{-\kappa \overline y_j -\eta \overline z_k}
D_{1,\delta} D_{2,\delta} F_{\beta, \Delta} (0,0) + \OO(\Delta^{1 +\beta \tand 2}).
\end{equation*}
By the definition of the operators $D_{1,\delta}$, $D_{2,\delta}$ with 
$\delta = r \sqrt{\Delta}$, we have 
\begin{align*}
D_{1,\delta}D_{2,\delta}F_{\beta, \Delta}(0,0) 
&= \Delta^{1+\beta}
\sum_{ l_1, l_2\ge1}
\widetilde f_\beta (\lambda_{l_1, l_2} \Delta, \mu_{l_1,l_2} \Delta)
(\cos(\pi r l_1 \sqrt{\Delta})-1)(\cos(\pi r l_2 \sqrt{\Delta})-1)
\\
&= \theta_2^\beta \Delta^{1+\beta}
\sum_{ l_1, l_2\ge1}
\widetilde f_\beta(\lambda_{l_1, l_2} \Delta, \theta_2 \mu_{l_1,l_2} \Delta)
g(l_1 \sqrt{\Delta},l_2 \sqrt{\Delta}),
\end{align*}
where $g(x,y) = (\cos(\pi r x)-1)(\cos(\pi r y)-1)$ and $g \in \boldsymbol G_{2}$.
We find from Lemmas \ref{lem1} and \ref{lem13} that
\begin{align*}
&\Delta \sum_{ l_1, l_2\ge1}
\widetilde f_\beta (\lambda_{l_1, l_2} \Delta, \theta_2 \mu_{l_1,l_2} \Delta)
g(l_1 \sqrt{\Delta},l_2 \sqrt{\Delta})
\\
&= \Delta \sum_{ l_1, l_2 \ge 1}
f_{\beta}(\theta_2 \pi^2 (l_1^2 +l_2^2) \Delta) 
g(l_1 \sqrt{\Delta},l_2 \sqrt{\Delta}) +
\begin{cases}
\OO(\Delta), & \beta < 2, 
\\
\OO(-\Delta \log \Delta), & \beta = 2, 
\\
\OO(\Delta^{3-\beta}), & \beta > 2
\end{cases}
\\
&= \iint_{\mathbb R_{+}^2} f_{\beta}(\theta_2 \pi^2 (x^2+y^2)) g(x,y) \dd x \dd y +
\begin{cases}
\OO(\Delta), & \beta < 2, 
\\
\OO(-\Delta \log \Delta), & \beta = 2, 
\\
\OO(\Delta^{3-\beta}), & \beta > 2
\end{cases}
\\
&= \iint_{\mathbb R_{+}^2} f_{\beta}(\theta_2 \pi^2 (x^2+y^2)) g(x,y) \dd x \dd y 
+\OO( \Delta^{1-\beta +\beta \tand 2}).
\end{align*}
This integral can be rewritten by (4.7) in Tonaki et al.\,\cite{TKU2024b} as
\begin{align*}
\iint_{\mathbb R_{+}^2} f_{\beta}(\theta_2 \pi^2 (x^2+y^2)) g(x,y) \dd x \dd y
&= \frac{1}{2 \theta_2 \pi}
\int_0^\infty 
\frac{1-\ee^{-x^2}}{x^{1+2\beta}}
\biggl(
J_0\Bigl(\frac{\sqrt{2}r x}{\sqrt{\theta_2}}\Bigr)
-2J_0\Bigl(\frac{r x}{\sqrt{\theta_2}}\Bigr)+1
\biggr) \dd x
\\
&= \frac{\phi_{r,\beta}(\theta_2)}{4 \theta_2^\beta}.
\end{align*}
Therefore, we have
\begin{equation*}
D_{1,\delta}D_{2,\delta}F_{\beta, \Delta}(0,0) 
= \theta_2^\beta \Delta^{\beta}
\biggl( \frac{\phi_{r,\beta}(\theta_2)}{4 \theta_2^\beta} 
+ \OO(\Delta^{1-\beta +\beta \tand 2})
\biggr)
= \Delta^\beta \frac{\phi_{r,\beta}(\theta_2)}{4} + \OO(\Delta^{1+\beta \tand 2})
\end{equation*}
and
\begin{equation*}
\Phi_{j,k}^{j,k}(\Delta;\beta)
= \Delta^\beta
\ee^{-\kappa \overline y_j -\eta \overline z_k} \phi_{r,\beta}(\theta_2)
+\OO(\Delta^{1+\beta \tand 2}).
\end{equation*}
Since $\Delta^{1+\beta \tand 2} = \oo(\Delta^{\beta})$ for $\beta \in (0,3)$,
we note that $\Phi_{j,k}^{j,k}(\Delta;\beta) = \OO(\Delta^{\beta})$
for $\beta \in (0,3)$ and thus,
\begin{equation*}
\sum_{i=1}^N \Phi_{2(i-1),j,k}^{j,k}(\Delta;\beta) = \OO(\Delta^\beta)
\end{equation*}
for $\beta \in (0,3)$ uniformly in $j,k$.

Consequently, we obtain
\begin{align*}
\EE[(T_{i,j,k}X)^2]
&= \mathbb A_{i,j,k} + \epsilon^2 \mathbb B_{i,j,k}
\\
&= R_{i,j,k}(\alpha_0) + \epsilon^2( \Phi_{j,k}^{j,k}(\Delta;\alpha) 
+R_{i,j,k}(\alpha))
\\
&= R_{i,j,k}(\alpha_0) 
+ \epsilon^2 \bigl( \Delta^\alpha
\ee^{-\kappa \overline y_j -\eta \overline z_k} \phi_{r,\alpha}(\theta_2)
+\OO(\Delta^{1+\alpha \tand 2}) +R_{i,j,k}(\alpha) \bigr),
\end{align*}
where for $\beta \in (0,3)$,   the reminder $R_{i,j,k}(\beta)$ satisfies
\begin{equation*}
\sum_{i=1}^N R_{i,j,k}(\beta) = \OO(\Delta^{\beta})
\end{equation*}
uniformly in $j,k$.
\end{proof}

\begin{proof}[\bf Proof of Proposition \ref{prop2}]
(1),(2) See the proof of Lemma 4.11 in \cite{TKU2024b}.

(3),(4) We find from (2) that 
\begin{equation*}
\frac{1}{\epsilon^4 N\Delta^{2 \alpha}} 
\sum_{i,i'=1}^N \cov[T_{i,j,k} X, T_{i',j,k} X]^2
= \OO \Bigg( \frac{1}{N} \sum_{i,i'=1}^N \frac{1}{(|i-i'|+1)^2} \Biggr)
= \OO(1)
\end{equation*}
uniformly in  $j,k$, and
\begin{align*}
&\frac{1}{\epsilon^4 m N (\Delta^{\alpha \tand 2})^2}
\sum_{k,k'=1}^{m_2}\sum_{j,j'=1}^{m_1}\sum_{i,i' =1}^N 
\cov[T_{i,j,k}X, T_{i',j',k'}X]^2
\\
&=
\OO \Biggl( 
\frac{1}{N}\sum_{i,i'=1}^N \frac{1}{(|i-i'|+1)^2}
\\
&\qquad \qquad \times
\biggl(
m\Delta^2
+\frac{1}{m_1}\sum_{j,j'=1}^{m_1} \frac{1}{(|j-j'|+1)^2}
\times \frac{1}{m_2}\sum_{k,k'=1}^{m_2} \frac{1}{(|k-k'|+1)^2}
\biggr)
\Biggr)
\\
&\qquad
+\OO \Biggl( \frac{1}{N}\sum_{i,i'=1}^N \frac{1}{(|i-i'|+1)^2}
\biggl(
m_2\Delta
\times\frac{1}{m_1}\sum_{j,j'=1}^{m_1} \frac{1}{(|j-j'|+1)^2}
\\
&\qquad\qquad
+m_1\Delta
\times \frac{1}{m_2}\sum_{k,k'=1}^{m_2} \frac{1}{(|k-k'|+1)^2}
\biggr)
\Biggr)
\\
&=\OO(1)+ \OO(\Delta^{1/2})
\\
&=\OO(1).
\end{align*}
(5),(6) Using Isserlis's theorem, we have
\begin{align*}
\Sigma_{l_1,\ldots,l_4}^{l_1',\ldots,l_4'} 
&= \cov\biggl[\prod_{p=1}^2 (x_{ l_p, l_p'}(t_i)-x_{ l_p, l_p'}(t_{i-1})),
\prod_{p=3}^4 (x_{ l_p, l_p'}(t_{i'})-x_{ l_p, l_p'}(t_{i'-1}))
\biggr]
\\
&= \epsilon^2 \Bigl(
A_{i, l_1, l_1'} A_{i', l_3, l_3'} 
\cov[B_{i, l_2, l_2'}, B_{i', l_4, l_4'}]
\\
&\qquad\qquad+
A_{i, l_1, l_1'} A_{i', l_4, l_4'} 
\cov[B_{i, l_2, l_2'}, B_{i', l_3, l_3'}]
\\
&\qquad\qquad+
A_{i, l_2, l_2'} A_{i', l_3, l_3'} 
\cov[B_{i, l_1, l_1'}, B_{i', l_4, l_4'}]
\\
&\qquad\qquad+
A_{i, l_2, l_2'} A_{i', l_4, l_4'} 
\cov[B_{i, l_1, l_1'}, B_{i', l_3, l_3'}] \Bigr)
\\
&\qquad+
\epsilon^4 \Bigl(
\cov[B_{i, l_1, l_1'}, B_{i', l_3, l_3'}]
\cov[B_{i, l_2, l_2'}, B_{i', l_4, l_4'}]
\\
&\qquad\qquad+
\cov[B_{i, l_1, l_1'}, B_{i', l_4, l_4'}]
\cov[B_{i, l_2, l_2'}, B_{i', l_3, l_3'}] \Bigr),
\end{align*}
and 
\begin{align*}
&\cov[(T_{i,j,k}X)^2, (T_{i',j',k'}X)^2 ] 
\\
&= 
\sum_{ l_1, l_1' \ge 1} \cdots \sum_{ l_4, l_4' \ge 1}
\Sigma_{l_1,\ldots,l_4}^{l_1',\ldots,l_4'}
\prod_{p=1}^2 
(e_{ l_p}^{(1)}(\widetilde y_j)-e_{ l_p}^{(1)}(\widetilde y_{j-1}))
(e_{ l_p'}^{(2)}(\widetilde z_k)-e_{ l_p'}^{(2)}(\widetilde z_{k-1}))
\\
&\qquad\qquad \times 
\prod_{p=3}^4 
(e_{ l_p}^{(1)}(\widetilde y_{j'})-e_{ l_p}^{(1)}(\widetilde y_{j'-1}))
(e_{ l_p'}^{(2)}(\widetilde z_{k'})-e_{ l_p'}^{(2)}(\widetilde z_{k'-1}))
\\
&= 
4\epsilon^2 \sum_{ l_1, l_1' \ge 1} A_{i, l_1, l_1'} 
(e_{ l_1}^{(1)}(\widetilde y_j)-e_{ l_1}^{(1)}(\widetilde y_{j-1}))
(e_{ l_1'}^{(2)}(\widetilde z_k)-e_{ l_1'}^{(2)}(\widetilde z_{k-1}))
\\
&\qquad\qquad\times \sum_{ l_2, l_2' \ge 1} A_{i', l_2, l_2'} 
(e_{ l_2}^{(1)}(\widetilde y_{j'})-e_{ l_2}^{(1)}(\widetilde y_{j'-1}))
(e_{ l_2'}^{(2)}(\widetilde z_{k'})-e_{ l_2'}^{(2)}(\widetilde z_{k'-1}))
\\
&\qquad\qquad\times \sum_{ l_3, l_3' \ge 1} 
\cov[B_{i, l_3, l_3'}, B_{i', l_3, l_3'}]
\\
&\qquad\qquad\qquad \times 
(e_{ l_3}^{(1)}(\widetilde y_j)-e_{ l_3}^{(1)}(\widetilde y_{j-1}))
(e_{ l_3'}^{(2)}(\widetilde z_k)-e_{ l_3'}^{(2)}(\widetilde z_{k-1}))
\\
&\qquad\qquad\qquad \times 
(e_{ l_3}^{(1)}(\widetilde y_{j'})-e_{ l_3}^{(1)}(\widetilde y_{j'-1}))
(e_{ l_3'}^{(2)}(\widetilde z_{k'})-e_{ l_3'}^{(2)}(\widetilde z_{k'-1}))
\\
&\qquad+2 \epsilon^4 \biggl( \sum_{ l_1, l_1' \ge 1} 
\cov[B_{i, l_1, l_1'}, B_{i', l_1, l_1'}]
\\
&\qquad\qquad\qquad \times 
(e_{ l_1}^{(1)}(\widetilde y_j)-e_{ l_1}^{(1)}(\widetilde y_{j-1}))
(e_{ l_1'}^{(2)}(\widetilde z_k)-e_{ l_1'}^{(2)}(\widetilde z_{k-1}))
\\
&\qquad\qquad\qquad \times 
(e_{ l_1}^{(1)}(\widetilde y_{j'})-e_{ l_1}^{(1)}(\widetilde y_{j'-1}))
(e_{ l_1'}^{(2)}(\widetilde z_{k'})-e_{ l_1'}^{(2)}(\widetilde z_{k'-1}))
\biggr)^2
\\
&= 
4\cov[T_{i,j,k} X, T_{i',j',k'} X] 
\\
&\qquad \times 
\sum_{ l_1, l_1' \ge 1} A_{i, l_1, l_1'} 
(e_{ l_1}^{(1)}(\widetilde y_j)-e_{ l_1}^{(1)}(\widetilde y_{j-1}))
(e_{ l_1'}^{(2)}(\widetilde z_k)-e_{ l_1'}^{(2)}(\widetilde z_{k-1}))
\\
&\qquad\times \sum_{ l_2, l_2' \ge 1} A_{i', l_2, l_2'} 
(e_{ l_2}^{(1)}(\widetilde y_{j'})-e_{ l_2}^{(1)}(\widetilde y_{j'-1}))
(e_{ l_2'}^{(2)}(\widetilde z_{k'})-e_{ l_2'}^{(2)}(\widetilde z_{k'-1}))
\\
&\qquad+2 \cov[T_{i,j,k} X, T_{i',j',k'} X]^2.
\end{align*}
We see from the proof of Proposition \ref{prop1} that 
for $\alpha_0 \in (0,3)$, 
\begin{equation*}
{\mathbf A}_{N,j,k} = 
\sum_{i=1}^N 
\biggl(
\sum_{ l_1, l_1' \ge 1} A_{i, l_1, l_1'} 
(e_{ l_1}^{(1)}(\widetilde y_j)-e_{ l_1}^{(1)}(\widetilde y_{j-1}))
(e_{ l_1'}^{(2)}(\widetilde z_k)-e_{ l_1'}^{(2)}(\widetilde z_{k-1}))
\biggr)^2
=\OO(\Delta^{\alpha_0})
\end{equation*}
uniformly in $j,k$. 
Therefore, we find from the Schwarz inequality, (3) and (4) that
\begin{align*}
&\biggl| \sum_{i,i'=1}^N \cov[(T_{i,j,k}X)^2, (T_{i',j,k}X)^2 ] \biggr|
\\
&\le 4 \biggl( \sum_{i,i'=1}^N \cov[T_{i,j,k} X, T_{i',j,k} X]^2 \biggr)^{1/2}
\mathbf A_{N,j,k}
+2\sum_{i,i'=1}^N \cov[T_{i,j,k} X, T_{i',j,k} X]^2
\\
&=\OO(\epsilon^2 \sqrt{N} \Delta^{\alpha} \cdot \Delta^{\alpha_0})
+\OO(\epsilon^4 N \Delta^{2\alpha})
\\
&=\OO (\epsilon^4 N \Delta^{2\alpha}
( \epsilon^{-2} \Delta^{\alpha_0 -\alpha +1/2} \lor 1 ))
\end{align*}
uniformly in $j,k$, and
\begin{align*}
&\biggl| \sum_{k,k'=1}^{m_2} \sum_{j,j'=1}^{m_1} \sum_{i,i'=1}^N 
\cov[(T_{i,j,k}X)^2, (T_{i',j',k'}X)^2 ] \biggr|
\\
&\le 4 \biggl( \sum_{k,k'=1}^{m_2} \sum_{j,j'=1}^{m_1} \sum_{i,i'=1}^N 
\cov[T_{i,j,k} X, T_{i',j',k'} X]^2 \biggr)^{1/2}
\sum_{k=1}^{m_2} \sum_{j=1}^{m_1} \mathbf A_{N,j,k}
\\
&\qquad+2\sum_{k,k'=1}^{m_2} \sum_{j,j'=1}^{m_1} \sum_{i,i'=1}^N 
\cov[T_{i,j,k} X, T_{i',j',k'} X]^2
\\
&= \OO(\epsilon^2 \sqrt{m N} \Delta^{\alpha \tand 2} \cdot m \Delta^{\alpha_0}) 
+\OO(\epsilon^4 m N (\Delta^{\alpha \tand 2})^2)
\\
&=\OO(\epsilon^4 m N (\Delta^{\alpha \tand 2})^2 
(\epsilon^{-2} \Delta^{\alpha_0 -\alpha \tand 2} \lor 1 )).
\end{align*}
\end{proof}

\subsubsection{Auxiliary results for Riemann summation and Fourier series}
The proof is similar to that in \cite{TKU2024b}, 
but in the case of SPDEs with a small noise, 
it is necessary to consider [A1]$_{\alpha_0}$ with $\alpha_0 > 2$
since the setting of the initial value affects the convergence rate of the estimator. 
In this subsubsection, we therefore give more general results 
than those in \cite{TKU2024b}.

We note that the following properties.
\begin{itemize}
\item[(1)]
$\displaystyle \sum_{1 \le l \le L} \frac{1}{l^\beta} =
\begin{cases}
\OO(L^{1-\beta}), & \beta < 1,
\\
\OO(\log L), & \beta = 1,
\\
\OO(1), & \beta >1,
\end{cases}
$

\item[(2)]
$\displaystyle \sum_{l > L}\frac{1}{l^\beta}
=\OO\Bigl(\frac{1}{L^{\beta-1}}\Bigr), \quad \beta>1,
$

\item[(3)]
$\displaystyle \sum_{1 \le l_1^2 + l_2^2 \le L^2} 
\frac{1}{(l_1^2 +l_2^2)^\beta} =
\begin{cases}
\OO(L^{2(1-\beta)}), & \beta < 1,
\\
\OO(\log L), & \beta = 1,
\\
\OO(1), & \beta >1,
\end{cases}
$

\item[(4)]
$\displaystyle \sum_{l_1^2 +l_2^2 > L^2} 
\frac{1}{(l_1^2 +l_2^2)^\beta}
=\OO \Bigl(\frac{1}{L^{2(\beta-1)}}\Bigr), \quad \beta>1.
$
\end{itemize}

\begin{lem}\label{lem1}
Let $\beta, h >0$ and $f \in \boldsymbol F_{\beta,\beta}$.
\begin{itemize}
\item[(1)]
For $\gamma \ge 0$, $g \in \boldsymbol G_\gamma$ and $\varphi \in \boldsymbol B_b$, 
it holds that as $h \to 0$, 
\begin{align}
&\sum_{l_1, l_2 \ge 1} 
f(\lambda_{ l_1, l_2}h^2) g(l_1 h, l_2 h) \varphi(l_1 y, l_2 z)
\nonumber
\\
&=
\sum_{ l_1, l_2\ge1} 
f(\theta_2\pi^2( l_1^2+ l_2^2)h^2) 
g( l_1 h, l_2 h) \varphi(l_1 y,l_2 z)
+
\begin{cases}
\OO(1), & \beta < \gamma,
\\
\OO(-\log h), & \beta = \gamma,
\\
\OO(h^{2(\gamma-\beta)}), & \beta > \gamma
\end{cases}
\label{eq-lem1-1}
\end{align}
uniformly in $y,z\in\mathbb R$. In particular, as $h \to 0$,
\begin{equation}
\sum_{ l_1, l_2\ge1} 
f(\lambda_{ l_1, l_2}h^2) 
g( l_1 h, l_2 h) \varphi(l_1 y, l_2 z)
=
\begin{cases}
\OO(h^{-2}), & \beta < \gamma+1,
\\
\OO(-h^{-2}\log h), & \beta = \gamma+1,
\\
\OO(h^{2(\gamma-\beta)}), & \beta > \gamma+1
\end{cases}
\label{eq-lem1-2}
\end{equation}
uniformly in $y,z\in\mathbb R$.

\item[(2)]
Let $\gamma, \gamma_1, \gamma_2 \ge 0$ with $\gamma = \gamma_1 +\gamma_2$ and 
$g \in \widetilde {\boldsymbol G}_{\gamma_1,\gamma_2} 
\cap \boldsymbol G_\gamma^2$. 
If $\beta < \gamma +1$, then the function 
$(x,y) \mapsto f(x^2+y^2)g(x,y)$ is integrable on $\mathbb R_{+}^2$ and 
it follows that 
for $\underline{\gamma} = \gamma_1 \land \gamma_2$
and $\overline{\gamma} = \gamma_1 \lor \gamma_2$, 
\begin{align}
&h^2 \sum_{ l_1, l_2\ge1}
f(( l_1^2+ l_2^2)h^2) g( l_1 h,  l_2 h)
\nonumber
\\
&=\iint_{\mathbb R_{+}^2} f(x^2+y^2) g(x,y) \dd x \dd y +
\begin{cases}
\OO(h^{2 \land (2 \underline{\gamma} +1)}), 
& \beta < \gamma \lor (\overline{\gamma} +1/2),
\\
\OO(-h^{2 \land (2 \underline{\gamma} +1)} \log h), 
& \beta = \gamma \lor (\overline{\gamma} +1/2),
\\
\OO(h^{2(\gamma -\beta +1)}), 
& \beta > \gamma \lor (\overline{\gamma} +1/2)
\end{cases}
\label{eq-lem1-3}
\end{align}
as $h \to 0$.
\end{itemize}
\end{lem}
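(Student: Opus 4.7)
The plan is to prove part (1) by reducing to $\theta_2 \pi^2 (l_1^2 + l_2^2)$ via the mean value theorem together with the summation estimates (1)--(4) listed just before the lemma, and to prove part (2) by a two-dimensional Riemann sum comparison with a boundary analysis that exploits the tensor structure $g = g_1 \otimes g_2$.

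For (1), since $\lambda_{l_1, l_2} - \theta_2 \pi^2 (l_1^2 + l_2^2) = \theta_2 \Gamma$ is independent of $(l_1, l_2)$, the mean value theorem yields
\[
f(\lambda_{l_1, l_2} h^2) - f(\theta_2 \pi^2 (l_1^2 + l_2^2) h^2) = \theta_2 \Gamma h^2 f'(\xi_{l_1, l_2})
\]
for some $\xi_{l_1, l_2}$ comparable (up to constants depending on $\theta, \Gamma$) to $(l_1^2 + l_2^2) h^2$ uniformly in $(l_1, l_2)$. The error in \eqref{eq-lem1-1} is then controlled by $|\theta_2 \Gamma| h^2 \sum_{l_1, l_2 \ge 1} |f'(\xi_{l_1, l_2})| |g(l_1 h, l_2 h)|$ since $\varphi$ is bounded. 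From $f \in \boldsymbol F_{\beta, \beta}$ one has $|f'(x)| \lesssim x^{-\beta - 1}$ as $x \to 0$ and $|f'(x)| \lesssim x^{-\beta - 2}$ as $x \to \infty$; combined with $|g(l_1 h, l_2 h)| \lesssim ((l_1^2 + l_2^2) h^2)^\gamma$ near the origin and $\lesssim 1$ at infinity, I would split the sum at the threshold $l_1^2 + l_2^2 \sim h^{-2}$. The low-frequency part equals $h^{2(\gamma - \beta)} \sum_{l_1^2 + l_2^2 \lesssim h^{-2}} (l_1^2 + l_2^2)^{\gamma - \beta - 1}$ and produces the three cases of \eqref{eq-lem1-1} directly from the summation estimates, while the high-frequency part is always $\OO(1)$. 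The second bound \eqref{eq-lem1-2} follows by the same split applied to $|f|$ itself, using $|f(x)| \lesssim x^{-\beta}$ near $0$ and $\lesssim x^{-\beta - 1}$ at infinity.

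For (2), I would first verify integrability in polar coordinates: near the origin the integrand is $\lesssim r^{2\gamma - 2\beta + 1}$, which is integrable if and only if $\beta < \gamma + 1$, while decay at infinity is automatic. Then, setting $F(x, y) = f(x^2 + y^2) g(x, y)$ and $I_{l_1, l_2} = [l_1 h, (l_1 + 1) h] \times [l_2 h, (l_2 + 1) h]$, I would decompose the error as the sum of cell discrepancies $\iint_{I_{l_1, l_2}} [F(l_1 h, l_2 h) - F(x, y)] \dd x \dd y$ plus a boundary term corresponding to the strip $S_h = ([0, h] \times \mathbb R_{+}) \cup (\mathbb R_{+} \times [0, h])$ missed by the sum starting at $l_1, l_2 \ge 1$. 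A two-term Taylor expansion on each cell produces a first-order piece that telescopes along each row and column into an additional boundary contribution on $\{x = h\} \cup \{y = h\}$, plus a second-order remainder of order $\OO(h^2 \iint_{I_{l_1,l_2}} |\pd^2 F|)$; summing and using the bounds from $g \in \boldsymbol G^2_\gamma$ and $f \in \boldsymbol F_{\beta, \beta}$ gives the interior contribution $\OO(h^2)$, with degradation to $\OO(-h^2 \log h)$ or $\OO(h^{2(\gamma - \beta + 1)})$ arising precisely when $|\pd^2 F|$ fails to be integrable near the origin. For the strip $S_h$, the tensor structure $g = g_1 g_2$ with $g_j(s) \lesssim s^{2\gamma_j}$ near $0$ gives $\int_0^h |g_j(s)| \dd s = \OO(h^{2\gamma_j + 1})$, and combining with the one-dimensional integral against $f$ in the remaining direction produces the boundary contribution $\OO(h^{2\gamma_1 + 1} + h^{2\gamma_2 + 1}) = \OO(h^{2\underline \gamma + 1})$.

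The main obstacle I anticipate is keeping the casework bookkeeping orderly: the threshold $\gamma \lor (\overline \gamma + 1/2)$ at which the interior $\OO(h^2)$ bound starts to fail has to be matched against the boundary exponent $2\underline \gamma + 1$, and one must verify that $h^{2 \land (2\underline \gamma + 1)}$ is indeed the dominant remainder in every subcase. Apart from this bookkeeping, the argument is a direct extension of the corresponding estimate in \cite{TKU2024b} to the larger range $\beta \in (0, 3)$, which is required here to accommodate $[\mathrm{A1}]_{\alpha_0}$ with $\alpha_0 \in (2, 3)$.
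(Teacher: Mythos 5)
Your proposal is correct in substance and follows essentially the same route as the paper. For (1) the paper simply cites Lemma 4.1-(1) of \cite{TKU2024b}; your mean-value-theorem argument (using that $\lambda_{l_1,l_2}h^2-\theta_2\pi^2(l_1^2+l_2^2)h^2=\theta_2\Gamma h^2$ is constant, so the discrepancy is $h^2 f'(\xi_{l_1,l_2})$ with $\xi_{l_1,l_2}\asymp(l_1^2+l_2^2)h^2$, then splitting at $l_1^2+l_2^2\sim h^{-2}$) is exactly the intended mechanism and reproduces the three cases via the summation estimates (1)--(4). For (2) the paper also does a cell-by-cell Taylor comparison plus a strip analysis using $g=g_1\otimes g_2$, but with one simplification you should adopt: it takes cells $E_{l_1,l_2}=((l_1-\tfrac12)h,(l_1+\tfrac12)h]\times((l_2-\tfrac12)h,(l_2+\tfrac12)h]$ \emph{centered} at the lattice points, so the first-order Taylor term integrates to zero by symmetry and no telescoping is needed; your corner-anchored cells force the extra summation-by-parts step, which does work (the telescoped boundary terms on $\{x=h\}\cup\{y=h\}$ are of the same order $\OO(h^{2\underline\gamma+1})$ as the strip terms) but is more laborious. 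Two points of bookkeeping to fix: the strip integral $\int_0^{h}\!\!\int_{h}^{\infty}|G|$ is \emph{not} always $\OO(h^{2\gamma_j+1})$ --- the transverse one-dimensional integral of $f(\theta_2\pi^2 y^2)g_{3-j}(y)$ over $(h,\infty)$ diverges when $\beta\ge\gamma_{3-j}+1/2$, and this is precisely where the $-\log h$ factor and the $\OO(h^{2(\gamma-\beta+1)})$ case in \eqref{eq-lem1-3} originate; correspondingly, the threshold $\overline\gamma+1/2$ comes from these boundary strips, not from the interior second-order remainder, whose own threshold is $\beta\lessgtr\gamma$ (integrability of $|\pd^2G|\lesssim(x^2+y^2)^{\gamma-\beta-1}$ near the origin). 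With those two cases carried through, your argument closes.
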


\begin{proof}
(1) See Lemma 4.1-(1) in \cite{TKU2024b}.
(2) Define $E_{l _1,l _2}
=((l _1-1/2)h, (l _1+1/2)h]
\times ((l _2-1/2)h,(l _2+1/2)h]\subset \mathbb R_{+}^2$
and $E=\bigcup_{l _1,l _2\ge1}E_{l _1,l _2}$.
Since $f \in \boldsymbol F_{\beta,\beta}$ and $g \in \boldsymbol G_\gamma^2$,
the function $G(x,y) = f(x^2+y^2) g(x,y)$ can be controlled as follows. 
\begin{equation}\label{eq-4-2}
|\pd^j G(x,y)| \lesssim 
\begin{cases}
(x^2+y^2)^{\gamma-\beta-j/2},
\quad (x, y \to 0),
\\
\frac{1}{(x^2+y^2)^{1+\beta+j/2}},
\quad (x, y \to \infty)
\end{cases}
\end{equation}
for $j=0,1,2$. 
Note that $G \in L^1(\mathbb R_{+}^2)$ if $\beta < \gamma +1$.
Using the Taylor expansion, we obtain
\begin{align}
&\Biggl|
h^2 \sum_{l _1,l _2\ge1}
G(l _1 h, l _2 h) -\iint_{E} G(x,y) \dd x \dd y
\Biggr|
\nonumber
\\
&=
\Biggl|
\sum_{l _1,l _2\ge1}
\iint_{E_{l _1,l _2}}
(G(l _1 h,l _2h)-G(x,y)) \dd x \dd y
\Biggr|
\nonumber
\\
&\le
\Biggl|
\sum_{l _1,l _2\ge1}
\iint_{E_{l _1,l _2}}
\pd G(l _1 h,l _2h)
\begin{pmatrix}
x-l _1 h
\\
y-l _2 h
\end{pmatrix}
\dd x \dd y
\Biggr|
\nonumber
\\
&\qquad+
\sum_{l _1,l _2\ge1}
\iint_{E_{l _1,l _2}} 
\biggl|
\int_0^1
(1-u)\pd^2 G(l _1 h +u(x-l _1 h), 
l _2 h +u(y-l _2 h))
\dd u  
\biggr|
\nonumber
\\
&\qquad\qquad \times 
\left|
\begin{pmatrix}
x-l _1 h
\\
y-l _2 h
\end{pmatrix}
\right|^2
\dd x \dd y 
\nonumber
\\
&\le h^2
\sum_{l _1,l _2\ge1}
\iint_{E_{l _1,l _2}} 
\sup_{v_1,v_2 \in [-1/2,1/2]} |\pd^2 G((l _1 +v_1)h, (l _2+v_2) h)|
\dd x \dd y 
\nonumber
\\
&= \OO \biggl(
h^2 \iint_E |\partial^2 G(x,y)|\dd x \dd y
\biggr).
\label{eq-4-3}
\end{align}
Furthermore, we see that for $j = 0,1,2$, 
\begin{align*}
\iint_E |\partial^j G(x,y)|\dd x \dd y
&= \OO \biggl( 
\iint_{h^2 \le x^2 +y^2 < \infty} |\partial^j G(x,y)|\dd x \dd y
\biggr)
\\
&= \OO \biggl( 
\iint_{h^2 \le x^2 +y^2 \le 1} |\partial^j G(x,y)|\dd x \dd y
\biggr) +\OO(1)
\\
&= \OO \biggl( 
\iint_{h^2 \le x^2 +y^2 \le 1} (x^2 +y^2)^{\gamma -\beta -j/2} \dd x \dd y
\biggr) +\OO(1)
\\
&= \OO \biggl( 
\int_{h}^1 r^{2\gamma -2\beta -j +1} \dd r
\biggr) +\OO(1)
\\
&= 
\begin{cases}
\OO (1), & -2\gamma +2\beta +j -1 < 1,
\\
\OO (-\log h), & -2\gamma +2\beta +j -1 = 1,
\\
\OO(h^{2+2\gamma -2\beta -j}), & -2\gamma +2\beta +j -1 > 1
\end{cases}
\\
&= 
\begin{cases}
\OO (1), & \beta < \gamma +1 -j/2,
\\
\OO (-\log h), & \beta = \gamma +1 -j/2,
\\
\OO(h^{2+2\gamma -2\beta -j}), & \beta > \gamma +1 -j/2,
\end{cases}
\end{align*}
and
\begin{equation*}
h^2 \iint_E |\pd^j G(x,y)|\dd x \dd y = 
\begin{cases}
\OO (h^2), & \beta < \gamma +1 -j/2,
\\
\OO (-h^2 \log h), & \beta = \gamma +1 -j/2,
\\
\OO(h^{4+2\gamma -2\beta -j}), & \beta > \gamma +1 -j/2.
\end{cases}
\end{equation*}
By the definition of $\widetilde {\boldsymbol G}_{\gamma_1,\gamma_2}$,
there exist $g_1,g_2 \in C(\mathbb R)$
such that $g(x,y) = g_1(x)g_2(y)$, and it follows that
\begin{align*}
\int_0^{h/2}\int_0^{h/2}
G(x,y)\dd x \dd y
&=\OO \Biggl(
\int_0^{h/2}\int_0^{h/2}
(x^2+y^2)^{\gamma-\beta} \dd x \dd y
\Biggr)
\\
&=\OO \Biggl(
\int_0^{h} r^{2\gamma-2\beta+1} \dd r
\Biggr)
=\OO (h^{2(\gamma-\beta+1)}),
\\
\int_{h/2}^\infty \int_0^{h/2}
G(x,y)\dd x \dd y
&\lesssim 
\int_0^{h/2} x^{2\gamma_1} \dd x
\int_{h/2}^\infty |f(\theta_2 \pi^2 y^2)| |g_2(y)| \dd y
\\
&=\OO \Biggl(
h^{2 \gamma_1 +1}
\biggl(
\int_{h}^1 
y^{2 \gamma_2-2\beta}\dd y
+\int_1^\infty 
\frac{\dd y}{y^{2+2\beta}}
\biggr)
\Biggr)
\\
&=
\begin{cases}
\OO(h^{2\gamma_1 +1}), & \beta < \gamma_2 +1/2,
\\
\OO(-h^{2 \gamma_1 +1} \log h), 
& \beta = \gamma_2 +1/2,
\\
\OO(h^{2(\gamma -\beta +1)}), & \beta > \gamma_2 +1/2,
\end{cases}
\\
\int_0^{h/2}\int_{h/2}^\infty 
G(x,y)\dd x \dd y
&\lesssim 
\begin{cases}
\OO(h^{2\gamma_2 +1}), & \beta < \gamma_1 +1/2,
\\
\OO(-h^{2 \gamma_2 +1} \log h), 
& \beta = \gamma_1 +1/2,
\\
\OO(h^{2(\gamma -\beta +1)}), & \beta > \gamma_1 +1/2,
\end{cases}
\end{align*}
and 
\begin{equation}\label{eq-4-4}
\iint_{\mathbb R_{+}^2\setminus E} G(x,y) \dd x \dd y = 
\begin{cases}
\OO(h^{2 \underline{\gamma} +1}), & \beta < \overline{\gamma} +1/2,
\\
\OO(-h^{2 \underline{\gamma} +1} \log h), 
& \beta = \overline{\gamma} +1/2,
\\
\OO(h^{2(\gamma -\beta +1)}), & \beta > \overline{\gamma} +1/2.
\end{cases}
\end{equation}
Therefore, we obtain
\begin{align*}
h^2 \sum_{l _1,l _2\ge1} G(l _1 h, l _2 h) 
&= \iint_{E} G(x,y) \dd x \dd y
+\OO \biggl(h^2 \iint_E |\partial^2 G(x,y)|\dd x \dd y \biggr)
\\
&= \iint_{\mathbb R_{+}^2} G(x,y) \dd x \dd y
-\iint_{\mathbb R_{+}^2\setminus E} G(x,y) \dd x \dd y
\\
&\qquad
+\OO \biggl(h^2 \iint_E |\partial^2 G(x,y)|\dd x \dd y \biggr)
\\
&= \iint_{\mathbb R_{+}^2} G(x,y) \dd x \dd y +
\begin{cases}
\OO(h^{2 \underline{\gamma} +1}), & \beta < \overline{\gamma} +1/2,
\\
\OO(-h^{2 \underline{\gamma} +1} \log h), 
& \beta = \overline{\gamma} +1/2,
\\
\OO(h^{2(\gamma -\beta +1)}), & \beta > \overline{\gamma} +1/2
\end{cases}
\\
&\qquad+
\begin{cases}
\OO (h^2), & \beta < \gamma,
\\
\OO (-h^2 \log h), & \beta = \gamma,
\\
\OO(h^{2(\gamma -\beta +1)}), & \beta > \gamma
\end{cases}
\\
&= \iint_{\mathbb R_{+}^2} G(x,y) \dd x \dd y 
\\
&\qquad+
\begin{cases}
\OO(h^{2 \land (2 \underline{\gamma} +1)}), 
& \beta < \gamma \lor (\overline{\gamma} +1/2),
\\
\OO(-h^{2 \land (2 \underline{\gamma} +1)} \log h), 
& \beta = \gamma \lor (\overline{\gamma} +1/2),
\\
\OO(h^{2(\gamma -\beta +1)}), 
& \beta > \gamma \lor (\overline{\gamma} +1/2).
\end{cases}
\end{align*}
\end{proof}

\begin{lem}[Lemma 4.2 in \cite{TKU2024b}]\label{lem13}
Let $\beta >0$ and $f \in C^2(\mathbb R_{+})$ such that
$f(x) = f_1(x) f_2(x)$, 
\begin{equation*}
f_1(x) \lesssim 1 \ (x \to 0), 
\quad
x f_1(x) \lesssim 1 \ (x \to \infty), 
\quad
f_2(x) = x^{-\beta}.
\end{equation*}
For $\gamma \ge 0$, $g \in \boldsymbol G_\gamma$ 
and $\varphi \in \boldsymbol B_b$, 
it holds that as $h \to 0$, 
\begin{align*}
&\sum_{ l_1, l_2\ge1} 
f_1(\lambda_{ l_1, l_2}h^2) f_2(\theta_2 \mu_{ l_1, l_2}h^2) 
g( l_1 h, l_2 h) \varphi(l_1 y, l_2 z)
\nonumber
\\
&=
\sum_{l_1,l_2\ge1} 
f(\lambda_{l_1, l_2}h^2)
g(l_1 h,l_2 h) \varphi(l_1 y, l_2 z)
+
\begin{cases}
\OO(1), & \beta < \gamma,
\\
\OO(-\log h), & \beta = \gamma,
\\
\OO(h^{2(\gamma-\beta)}), & \beta > \gamma
\end{cases}
\end{align*}
uniformly in $y,z\in\mathbb R$. 
\end{lem}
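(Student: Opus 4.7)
\textbf{Plan for the proof of Lemma \ref{lem13}.}
The natural approach is to directly bound the difference between the two sums. Writing $f(\lambda_{l_1,l_2}h^2) = f_1(\lambda_{l_1,l_2}h^2) f_2(\lambda_{l_1,l_2}h^2)$, the difference of the summands equals
\begin{equation*}
f_1(\lambda_{l_1,l_2}h^2)\bigl[f_2(\lambda_{l_1,l_2}h^2) - f_2(\theta_2 \mu_{l_1,l_2}h^2)\bigr] g(l_1 h, l_2 h)\varphi(l_1 y, l_2 z).
\end{equation*}
The key observation is that $\lambda_{l_1,l_2} - \theta_2 \mu_{l_1,l_2} = \theta_2(\Gamma - \mu_0)$ is a constant independent of $l_1,l_2$. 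Applying the mean value theorem to $f_2(x) = x^{-\beta}$, one gets
\begin{equation*}
|f_2(\lambda_{l_1,l_2} h^2) - f_2(\theta_2\mu_{l_1,l_2} h^2)| \lesssim \frac{h^2}{((l_1^2+l_2^2)h^2)^{\beta+1}} = \frac{h^{-2\beta}}{(l_1^2+l_2^2)^{\beta+1}},
\end{equation*}
uniformly for $l_1,l_2 \ge 1$ and small $h$; so replacing $f_2(\lambda h^2)$ by $f_2(\theta_2 \mu h^2)$ costs one extra power of $(l_1^2+l_2^2)$ in the denominator compared with $f$ itself.

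Next I would split the sum according to whether $(l_1^2+l_2^2)h^2$ is small or large, using the $f_1$-bounds $f_1(x) \lesssim 1$ ($x \to 0$) and $x f_1(x) \lesssim 1$ ($x \to \infty$), together with the two-sided growth of $g \in \boldsymbol G_\gamma$ and $\varphi \in \boldsymbol B_b$. On the low-frequency part $\{(l_1,l_2) : l_1^2+l_2^2 \le h^{-2}\}$ I would use $|f_1| \lesssim 1$ and $|g(l_1 h, l_2 h)| \lesssim ((l_1^2+l_2^2)h^2)^\gamma$, which reduces the error to
\begin{equation*}
h^{2(\gamma-\beta)} \sum_{l_1^2+l_2^2 \le h^{-2}} \frac{1}{(l_1^2+l_2^2)^{\beta+1-\gamma}},
\end{equation*}
and the three cases $\beta < \gamma$, $\beta = \gamma$, $\beta > \gamma$ then produce exactly the three asymptotic regimes in the statement via the Riemann-sum / zeta-type bounds (1)--(4) listed just before Lemma \ref{lem1}. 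On the high-frequency part $\{l_1^2+l_2^2 > h^{-2}\}$ I would use $|f_1(\lambda h^2)| \lesssim 1/(\lambda h^2) \asymp h^{-2}/(l_1^2+l_2^2)$ and $|g| \lesssim 1$, so the contribution becomes a tail sum of $1/(l_1^2+l_2^2)^{\beta+2}$ of order $h^{2(\beta+1)}$, giving $\OO(1)$ after multiplication by $h^{-2\beta-2}$; this term is thus harmless in every case.

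The only real obstacle is bookkeeping the constants carefully so that the bounds are uniform in $y,z \in \mathbb R$ — but since $\varphi \in \boldsymbol B_b$, $\varphi$ can simply be bounded by $\|\varphi\|_\infty$ and the uniformity is automatic. The logical structure is identical to that of Lemma \ref{lem1}-(1), where the passage from $\lambda_{l_1,l_2} h^2$ to $\theta_2 \pi^2(l_1^2+l_2^2)h^2$ was handled by the same mean value argument applied to $f$ instead of $f_2$; here one exploits more sharply the explicit form $f_2(x) = x^{-\beta}$ to get the cleaner bound $\mu^{-\beta-1}$ on the increment. No new estimate beyond those already stated before Lemma \ref{lem1} should be required.
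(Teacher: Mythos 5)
Your proposal is correct: the paper gives no proof of this lemma (it is imported verbatim as Lemma 4.2 of \cite{TKU2024b}), and your argument --- exploiting that $\lambda_{l_1,l_2}-\theta_2\mu_{l_1,l_2}=\theta_2(\Gamma-\mu_0)$ is constant, applying the mean value theorem to $f_2(x)=x^{-\beta}$ to gain one extra power of $(l_1^2+l_2^2)$, and then splitting at $l_1^2+l_2^2=h^{-2}$ using the $f_1$- and $g$-bounds together with the zeta-type sums (1)--(4) --- reproduces exactly the three stated regimes and is the same mechanism by which the companion Lemma \ref{lem1}-(1) is established in the cited reference. No gap.
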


\begin{lem}\label{lem5}
Let $\beta >0$, $\gamma, \gamma_1, \gamma_2 \ge 0$ with $\gamma = \gamma_1 +\gamma_2$ 
and $\tau_1, \tau_2 \in \{\sin,\cos\}$. 
For $f \in \boldsymbol F_{\beta,\beta}$ and 
$g \in \widetilde {\boldsymbol G}_{\gamma_1,\gamma_2} \cap \boldsymbol G_\gamma^2$, 
define $G(x,y)=f(x^2+y^2)g(x,y)$.

\begin{itemize}
\item[(1)]
For $y\in(0,2)$, as $h\to0$,
\begin{equation}\label{eq-lem5-1}
\sum_{l _1,l _2 \ge 1}
G(l _1h,l _2h) \tau_1(\pi l _1 y) =
\begin{cases}
\OO \Bigl( \frac{h^{0 \land (2\gamma_1-1)}}{(y \land (2-y))^2} \Bigr), 
& \beta < \gamma \lor (\gamma_2 +1/2),
\\
\OO \Bigl( \frac{-h^{0 \land (2\gamma_1-1)} \log h}{(y \land (2-y))^2} \Bigr), 
& \beta = \gamma \lor (\gamma_2 +1/2),
\\
\OO \Bigl( \frac{h^{2(\gamma-\beta)}}{(y \land (2-y))^2} \Bigr), 
& \beta > \gamma \lor (\gamma_2 +1/2).
\end{cases}
\end{equation}

\item[(2)]
For $y,z \in(0,2)$, as $h \to 0$,
\begin{equation}\label{eq-lem5-2}
\sum_{l _1,l _2 \ge 1} 
G(l _1h,l _2h) \tau_1(\pi l _1 y) \tau_2(\pi l _2 z) =
\begin{cases}
\OO \Bigl( \frac{h^{0 \land (2 \underline{\gamma}-1)}}
{(y \land (2-y))(z \land (2-z))} \Bigr), 
& \beta < \gamma \lor (\overline{\gamma} +1/2),
\\
\OO \Bigl( \frac{-h^{0 \land (2\underline{\gamma} -1)} \log h}
{(y \land (2-y))(z \land (2-z))} \Bigr), 
& \beta = \gamma \lor (\overline{\gamma} +1/2),
\\
\OO \Bigl( \frac{h^{2(\gamma-\beta)}}
{(y \land (2-y))(z \land (2-z))} \Bigr), 
& \beta > \gamma \lor (\overline{\gamma} +1/2).
\end{cases}
\end{equation}
\end{itemize}
\end{lem}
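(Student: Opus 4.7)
The plan is to exploit the oscillation of $\tau_1(\pi l_1 y)$ (and, in part (2), $\tau_2(\pi l_2 z)$) through iterated summation by parts, reducing the trigonometric sums to absolutely convergent sums of finite differences of $G$, which are then estimated by Riemann-sum arguments analogous to Lemma \ref{lem1} using the decay of $G$ provided by $f \in \boldsymbol F_{\beta,\beta}$ and $g \in \widetilde{\boldsymbol G}_{\gamma_1,\gamma_2} \cap \boldsymbol G_\gamma^2$.

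For part (1) I will apply summation by parts twice in the $l_1$ variable. The starting point is the Dirichlet-type estimate $|\sum_{k=1}^l \tau(\pi k y)| \lesssim (y \wedge (2-y))^{-1}$; iterating this estimate (after a trigonometric product-to-sum manipulation that absorbs the linearly-growing part of the second partial sum into a boundary term killed by the decay of $G$) yields a Fej\'er-type bound of order $(y \wedge (2-y))^{-2}$ for the second-level kernel. This converts the sum into
\begin{equation*}
\sum_{l_1, l_2 \ge 1} G(l_1 h, l_2 h) \tau_1(\pi l_1 y)
= \OO\biggl( \frac{1}{(y \wedge (2-y))^2} \sum_{l_1, l_2 \ge 1} |\Delta_{1,h}^2 G(l_1 h, l_2 h)| \biggr),
\end{equation*}
where $\Delta_{1,h}^2 G(x,y) = G(x+2h, y) - 2 G(x+h, y) + G(x, y)$. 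I then estimate $|\Delta_{1,h}^2 G(l_1 h, l_2 h)| \lesssim h^2 \sup_{\xi \in [l_1 h, (l_1+2)h]} |\pd_x^2 G(\xi, l_2 h)|$ by Taylor's theorem, and compare the resulting double sum with $\iint_{\mathbb R_+^2} |\pd_x^2 G(x,y)| \dd x \dd y$ exactly as in the proof of Lemma \ref{lem1}(2). The factorisation $g(x,y) = g_1(x) g_2(y)$ together with $g_j(s) \lesssim s^{2\gamma_j}$ near $0$ gives the refined bound $|\pd_x^2 G(x,y)| \lesssim (x^2+y^2)^{-\beta} x^{2\gamma_1 - 2} y^{2\gamma_2}$ for small $(x,y)$, and $f \in \boldsymbol F_{\beta,\beta}$ controls the tails. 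Splitting the integration region into a neighbourhood of the origin and its complement and computing radial integrals $\int_h^1 r^{2\gamma - 2\beta - 1} \dd r$, $\int_0^{h/2} x^{2\gamma_1 - 1} \dd x$, as was done between \eqref{eq-4-3} and \eqref{eq-4-4}, produces the trichotomy in $\beta$ versus $\gamma \lor (\gamma_2 + 1/2)$ and the exponent $h^{0 \wedge (2\gamma_1 - 1)}$.

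For part (2) I will instead apply a \emph{single} summation by parts in each of the variables $l_1$ and $l_2$. The double Dirichlet factor $(y \wedge (2-y))^{-1}(z \wedge (2-z))^{-1}$ then arises as a product of the two first-level partial-sum bounds, and the remaining double sum has the form $\sum |\Delta_{1,h}\Delta_{2,h} G(l_1 h, l_2 h)| \lesssim h^2 \sum |\pd_x \pd_y G|$, which is compared with $\iint |\pd_x \pd_y G(x,y)| \dd x \dd y$ by the same region-splitting argument. The symmetry between $l_1$ and $l_2$ in this first-order treatment explains the appearance of $\underline{\gamma} = \gamma_1 \wedge \gamma_2$ and $\overline{\gamma} = \gamma_1 \vee \gamma_2$ in the three cases, the smaller index governing the $h$-rate near the origin and the larger index governing the threshold in $\beta$.

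The main obstacle is the second summation by parts in part (1): a direct computation shows that the unweighted second partial sum $\sum_{k=1}^l B_k(y)$ grows linearly in $l$ with leading term of order $l\cot(\pi y/2)$, so the Fej\'er-type bound $(y\wedge(2-y))^{-2}$ is not available literally and must be extracted by an Abel rearrangement in which the linearly-growing part is absorbed into a boundary contribution and then eliminated using the decay of $a_{l_1} = \sum_{l_2} G(l_1 h, l_2 h)$. Verifying that this boundary term is negligible as $N \to \infty$ and correctly tracking the power of $h$ through the various subregions near the origin is where most of the book-keeping lies; once these estimates are in place, the trichotomies in parts (1) and (2) follow from purely radial integrals.
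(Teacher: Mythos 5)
Your overall strategy---Abel summation against the Dirichlet/Fej\'er-type kernels to produce the factors $(y \land (2-y))^{-1}$, followed by a Riemann-sum comparison of the difference sums with integrals of derivatives of $G$---is the same as the paper's (which delegates the summation-by-parts step to Lemmas 4.4 and 4.6 of \cite{TKU2024b}). But there is a genuine gap in where you locate the source of the rate $h^{0 \land (2\gamma_1-1)}$ and of the threshold $\gamma_2+1/2$ (resp.\ $\overline{\gamma}+1/2$). You discard the boundary terms of the double Abel summation as ``killed by the decay of $G$'' and reduce part (1) to $\OO\bigl((y\land(2-y))^{-2}\sum_{l_1,l_2}|\Delta_{1,h}^2 G(l_1h,l_2h)|\bigr)$ alone. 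In fact the surviving boundary term is of the form $\sum_{l_2} D_{1,h}G(h,l_2h)$, which is controlled by $\sum_{l_2}|G(h,l_2h)|$; this one-dimensional sum is exactly of order $h^{2\gamma_1-1}$ (resp.\ $-h^{2\gamma_1-1}\log h$, $h^{2(\gamma-\beta)}$) according as $\beta$ is below, equal to, or above $\gamma_2+1/2$, and when $\gamma_1<1/2$ it \emph{diverges} as $h\to0$ --- it is the dominant contribution in that regime, not a negligible one. The analogous boundary sums $\sum_{l_2}|G(h,l_2h)|$ and $\sum_{l_1}|G(l_1h,h)|$ play the same role in part (2) and are the origin of $\underline{\gamma}$ and $\overline{\gamma}$.

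Having dropped these terms, you then try to recover the missing rate from the second-derivative integral via the ``refined bound'' $|\pd_x^2 G(x,y)|\lesssim (x^2+y^2)^{-\beta}x^{2\gamma_1-2}y^{2\gamma_2}$. This bound is not available from the hypotheses: membership in $\widetilde{\boldsymbol G}_{\gamma_1,\gamma_2}$ gives the factorization $g=g_1g_2$ with $g_j(s)\lesssim s^{2\gamma_j}$ for the function $g$ itself only ($g_1,g_2$ are merely continuous), while the derivative control comes from $\boldsymbol G_\gamma^2$ and is the isotropic bound $|\pd^j g(x,y)|\lesssim (x^2+y^2)^{\gamma-j/2}$. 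Consequently the difference sum $\sum|\Delta_{1,h}^2 G|\lesssim h^2\iint_E|\pd_x^2 G|$ can only be estimated by $(x^2+y^2)^{\gamma-\beta-1}$ near the origin, which yields the trichotomy in $\beta$ versus $\gamma$ at rate $\OO(1)$ (or $-\log h$, or $h^{2(\gamma-\beta)}$) --- it cannot produce the exponent $2\gamma_1-1$ or the threshold $\gamma_2+1/2$. The fix is to retain the boundary terms, estimate them one-dimensionally using the factorization of $g$ (as in the paper's bound \eqref{eq-5-1}), and take the maximum of that contribution with the derivative-integral contribution; this is precisely how the stated case distinction $\beta \lessgtr \gamma \lor (\gamma_2+1/2)$ arises.
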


\begin{proof}
(1) It holds from
$f \in \boldsymbol F_{\beta,\beta}$ and 
$g \in \widetilde {\boldsymbol G}_{\gamma_1,\gamma_2}$ that 
\begin{align}
\sum_{l _2\ge1}|G(h,l _2 h)|
&\lesssim
h^{2 \gamma_1}
\sum_{l _2\ge1}|f((l _2^2+1)h^2)| ((l _2 h)^2 \land 1)^{\gamma_2}
\nonumber
\\
&\lesssim
h^{2(\gamma -\beta)}
\sum_{1\le l _2 \le 1/h}
\frac{1}{l _2^{2(\beta -\gamma_2)}}
+h^{2(\gamma_2 -\beta-1)}
\sum_{l _2 > 1/h}
\frac{1}{l _2^{2\beta+2}}
\nonumber
\\
& =
\begin{cases}
\OO(h^{2\gamma_1-1}), & \beta < \gamma_2 +1/2,
\\
\OO(-h^{2\gamma_1-1} \log h), & \beta = \gamma_2 +1/2,
\\
\OO(h^{2(\gamma-\beta)}), & \beta > \gamma_2 +1/2.
\end{cases}
\label{eq-5-1}
\end{align}
Since we can estimate
\begin{equation*}
|D_{1,h}^k D_{2,h}^m G(h x,h y)| 
\lesssim 
\min_{j=0,\ldots,(k+m) \land 2} h^{j} 
\sup_{u,v\in[0,1]} |\pd^{j} G(h(x+u), h(y+v))|
\end{equation*}
for $G \in C^2(\mathbb R_{+}^2)$, we obtain
\begin{align*}
K(h) &= \Biggl|
\sum_{l _2 \ge 1} 
D_{1,h} G(h,l _2 h)
\Biggr|
+\sum_{l _1 \ge 1} \Biggl|
\sum_{l _2 \ge 1}
D_{1,h}^2 G(l _1 h,l _2 h)
\Biggr|
\\
&\lesssim
\sum_{l _2 \ge 1} 
\sup_{u\in[0,1]} |G(h(1+u),l _2 h)|
\\
&\qquad 
+ h^2 \sum_{l _1,l _2 \ge 1}
\sup_{u,v\in[0,1]}
|\pd_x^2 G((l _1+u) h,(l _2+v) h)|
\\
&\lesssim
\sum_{l _2 \ge 1} |G(h,l _2 h)| + \iint_E |\pd_x^2 G(x,y)| \dd x \dd y
\\
&=
\begin{cases}
\OO(h^{2\gamma_1-1}), & \beta < \gamma_2 +1/2,
\\
\OO(-h^{2(\gamma-\beta)} \log h), & \beta = \gamma_2 +1/2,
\\
\OO(h^{2(\gamma-\beta)}), & \beta > \gamma_2 +1/2
\end{cases}
\\
&\qquad +
\begin{cases}
\OO (1), & \beta < \gamma,
\\
\OO (-\log h), & \beta = \gamma,
\\
\OO(h^{2(\gamma -\beta)}), & \beta > \gamma
\end{cases}
\\
&=
\begin{cases}
\OO(h^{0 \land (2\gamma_1-1)}), & \beta < \gamma \lor (\gamma_2 +1/2),
\\
\OO(-h^{0 \land (2\gamma_1-1)} \log h), & \beta = \gamma \lor (\gamma_2 +1/2),
\\
\OO(h^{2(\gamma-\beta)}), & \beta > \gamma \lor (\gamma_2 +1/2),
\end{cases}
\end{align*}
which together with Lemma 4.4 in \cite{TKU2024b} yields the desired result. 

(2) Note that
\begin{align*}
K(h) 
&= |G(h, h)|
+\sum_{l _1 \ge 1} |G(l _1h, h)|
+\sum_{l _2 \ge 1} |G(h, l _2 h)|
\\
&\qquad
+\sum_{l _1,l _2 \ge 2}
|D_{1,h} D_{2,h} G((l _1-1)h,(l _2-1)h)|
\\
&\lesssim
|G(h, h)|
+\sum_{l _1 \ge 1} |G(l _1h, h)|
+\sum_{l _2 \ge 1} |G(h, l _2 h)|
\\
&\qquad
+h^2 \sum_{l _1,l _2 \ge 1}
|\pd^2 G(l _1 h, l _2 h)|.
\end{align*}
In the same way as \eqref{eq-5-1}, we have
$|G(h, h)| = \OO (h^{2(\gamma-\beta)})$ and
\begin{align*}
h^2 \sum_{l _1,l _2 \ge 1} 
|\pd^2 G(l _1h, l _2 h)|
&\lesssim
h^{2(\gamma-\beta)}
\sum_{1 \le l _1^2 + l _2^2 \le 1/h^2}
\frac{1}{(l _1^2+l _2^2)^{\beta-\gamma+1}}
\\
&\qquad
+h^{-2(1+\beta)}
\sum_{l _1^2 + l _2^2 > 1/h^2}
\frac{1}{(l _1^2+l _2^2)^{\beta+2}}
\\
&=
\begin{cases}
\OO(1), & \beta < \gamma,
\\
\OO(- \log h), & \beta = \gamma,
\\
\OO(h^{2(\gamma-\beta)}), & \beta >\gamma.
\end{cases}
\end{align*}
Therefore, we see
\begin{align*}
K(h) &= \OO (h^{2(\gamma-\beta)})+
\begin{cases}
\OO(h^{2\gamma_2-1}), & \beta < \gamma_1 +1/2,
\\
\OO(-h^{2\gamma_2-1} \log h), & \beta = \gamma_1 +1/2,
\\
\OO(h^{2(\gamma-\beta)}), & \beta > \gamma_1 +1/2
\end{cases}
\\
&\qquad+
\begin{cases}
\OO(h^{2\gamma_1-1}), & \beta < \gamma_2 +1/2,
\\
\OO(-h^{2\gamma_1-1} \log h), & \beta = \gamma_2 +1/2,
\\
\OO(h^{2(\gamma-\beta)}), & \beta > \gamma_2 +1/2
\end{cases}
\\
&\qquad+
\begin{cases}
\OO(1), & \beta < \gamma,
\\
\OO(- \log h), & \beta = \gamma,
\\
\OO(h^{2(\gamma-\beta)}), & \beta >\gamma
\end{cases}
\\
&=
\begin{cases}
\OO(h^{2 \underline{\gamma} -1}), & \beta < \overline{\gamma} +1/2,
\\
\OO(-h^{2 \underline{\gamma} -1} \log h), 
& \beta = \overline{\gamma} +1/2,
\\
\OO(h^{2(\gamma -\beta)}), & \beta > \overline{\gamma} +1/2.
\end{cases}
\\
&\qquad+
\begin{cases}
\OO(1), & \beta < \gamma,
\\
\OO(- \log h), & \beta = \gamma,
\\
\OO(h^{2(\gamma-\beta)}), & \beta >\gamma
\end{cases}
\\
&=
\begin{cases}
\OO(h^{0 \land (2 \underline{\gamma}-1)}), 
& \beta < \gamma \lor (\overline{\gamma} +1/2),
\\
\OO(-h^{0 \land (2\underline{\gamma} -1)} \log h), 
& \beta = \gamma \lor (\overline{\gamma} +1/2),
\\
\OO(h^{2(\gamma-\beta)}), 
& \beta > \gamma \lor (\overline{\gamma} +1/2).
\end{cases}
\end{align*}
Using Lemma 4.6 in \cite{TKU2024b}, we obtain \eqref{eq-lem5-2}.
\end{proof}

\subsubsection{Boundedness of two-dimensional Fourier cosine series and its differences}
Let $\beta >0$ and $r \in (0,\infty)$.
For $f \in \boldsymbol F_{\beta,\beta}$ and $y,z \in [0,2)$, define
\begin{equation}\label{eq-6-1}
F_{\Delta}(y,z)=
\Delta^{1+\beta}
\sum_{ l_1, l_2\ge1}
f(\lambda_{ l_1, l_2}\Delta) \cos(\pi  l_1 y)\cos(\pi  l_2 z).
\end{equation}
Let
\begin{align*}
&g_1(x,y) = 1,&
&g_2(x,y) = \sin(x),&
&g_3(x,y) = 1-\cos(x),
\\
&g_4(x,y) = \sin^2(x),&
&g_5(x,y) = \sin(x)\sin(y),&
&g_6(x,y) = \sin(x)(\cos(y)-1),
\\
&g_7(x,y) = \sin^2(x)\sin(y),&
&g_8(x,y) = \sin^2(x)(\cos(y)-1),&
&g_9(x,y) = \sin^2(x)\sin^2(y).
\end{align*}
We then have 
\begin{equation}\label{eq-6-2}
g_1 \in \boldsymbol G_0,
\quad
g_2 \in \boldsymbol G_{1/2},
\quad
g_3,g_4,g_5 \in \boldsymbol G_1,
\end{equation}
\begin{equation}\label{eq-6-3}
g_6 \in \widetilde {\boldsymbol G}_{1/2,1} 
\subset \boldsymbol G_{3/2},
\quad
g_7 \in \widetilde {\boldsymbol G}_{1,1/2} 
\subset \boldsymbol G_{3/2},
\quad
g_8,g_9 \in \widetilde {\boldsymbol G}_{1,1} 
\subset \boldsymbol G_2.
\end{equation}
The series given in \eqref{eq-6-1} and its differences have the following asymptotic results.
\begin{lem}\label{lem6}
For $\beta > 0$ and $\delta=r\sqrt{\Delta}$, it follows that
\begin{itemize}
\item[(1)]
$F_{\Delta}(y,z) = \OO(\Delta^{\beta \tand 1})$ uniformly in $y,z \in [0,2)$,

\item[(2)]
$D_{1,\delta} F_{\Delta}(y,z) = \OO(\Delta^{\beta \tand (3/2)})$ 
uniformly in $y \in (0,2)$ and $z \in [0,2)$,

\item[(3)]
$D_{1,\delta} F_{\Delta}(0,z) = \OO(\Delta^{\beta \tand 2})$ 
uniformly in $z \in [0,2)$,

\item[(4)]
$D_{1,\delta}^2 F_{\Delta}(y,z) = \OO(\Delta^{\beta \tand 2})$ 
uniformly in $y, z \in [0,2)$,

\item[(5)]
$D_{1,\delta}D_{2,\delta} F_{\Delta}(y,z) = \OO(\Delta^{\beta \tand 2})$ 
uniformly in $y,z \in (0,2)$.
\end{itemize}
\end{lem}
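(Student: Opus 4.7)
The plan is to reduce each of the five estimates to a single application of Lemma \ref{lem1}(1) with $h=\sqrt{\Delta}$, combined with the prefactor $\Delta^{1+\beta}$ sitting in front of the series. The key observation is that each application of the difference operator $D_{i,\delta}$ to $\cos(\pi l_i\cdot)$ produces a factor $\sin(\pi l_i\delta/2)$, and since $\delta=r\sqrt{\Delta}$ this rescaled factor takes the form $\widehat g(l_i\sqrt{\Delta})$ with $\widehat g\in\boldsymbol G_{1/2}$. A second difference, or the boundary specialization $y=0$, upgrades the factor to $\sin^2(\pi l_i\delta/2)$, that is $\widehat g\in\boldsymbol G_1$.

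For part (1) I apply Lemma \ref{lem1}(1) equation \eqref{eq-lem1-2} directly with $g\equiv 1\in\boldsymbol G_0$ and the bounded function $\varphi(l_1y,l_2z)=\cos(\pi l_1 y)\cos(\pi l_2 z)$. This gives the sum bound $\OO(\Delta^{-1})$, $\OO(-\Delta^{-1}\log\Delta)$ or $\OO(\Delta^{-\beta})$ according as $\beta<1$, $\beta=1$ or $\beta>1$; multiplying by $\Delta^{1+\beta}$ yields exactly $\Delta^{\beta\tand 1}$. For the remaining parts I start from the trigonometric identities
\begin{align*}
D_\delta\cos(\pi l_1 y)&=-2\sin(\pi l_1\delta/2)\sin(\pi l_1(y+\delta/2)),\\
\cos(\pi l_1\delta)-1&=-2\sin^2(\pi l_1\delta/2),\\
D_\delta^2\cos(\pi l_1 y)&=-4\sin^2(\pi l_1\delta/2)\cos(\pi l_1(y+\delta)),
\end{align*}
together with the product version for (5). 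After inserting these and extracting the $\sqrt{\Delta}$ scaling from $\delta$, each of $D_{1,\delta}F_\Delta$, $D_{1,\delta}F_\Delta(0,\cdot)$, $D_{1,\delta}^2F_\Delta$, $D_{1,\delta}D_{2,\delta}F_\Delta$ is of the form $\Delta^{1+\beta}\sum f(\lambda_{l_1,l_2}\Delta)\,g(l_1\sqrt\Delta,l_2\sqrt\Delta)\,\varphi(l_1 y,l_2 z)$ with $\varphi\in\boldsymbol B_b$, and with $g$ belonging respectively to $\boldsymbol G_{1/2}$, $\boldsymbol G_1$, $\boldsymbol G_1$, $\boldsymbol G_1$ (the last using $\sin(\pi rx/2)\sin(\pi ry/2)\in\widetilde{\boldsymbol G}_{1/2,1/2}\subset\boldsymbol G_1$).

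I then plug these into Lemma \ref{lem1}(1), equation \eqref{eq-lem1-2}. With $\gamma$ taking the value $1/2$ for (2) and $1$ for (3)--(5), the sum is $\OO(\Delta^{-1})$ below the threshold $\beta=\gamma+1$, $\OO(-\Delta^{-1}\log\Delta)$ exactly at it, and $\OO(\Delta^{\gamma-\beta})$ above it. Multiplying by $\Delta^{1+\beta}$ and matching against the definition of $\Delta^{\beta\tand(\gamma+1)}$ delivers $\Delta^{\beta\tand 3/2}$ in (2) and $\Delta^{\beta\tand 2}$ in (3), (4), (5). Uniformity in $y,z$ is automatic because all $y,z$-dependence was swept into $\varphi\in\boldsymbol B_b$, whose contribution to the bound in Lemma \ref{lem1}(1) is via the $\sup$-norm.

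The main obstacle is purely bookkeeping: verifying that the $\sin^k(\pi rx/2)$ factors land in the correct $\boldsymbol G_\gamma$ class via the local bound $\sin^k(\pi rx/2)\lesssim|x|^k$ near $0$ and boundedness at infinity, and that at the boundary exponents $\beta=\gamma+1$ the logarithmic correction $-h^{-2}\log h$ delivered by Lemma \ref{lem1}(1) lines up with the logarithmic case baked into the $\tand$ notation. Once these identifications are made, the five conclusions follow by direct substitution.
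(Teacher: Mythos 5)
Your proposal is correct and follows essentially the same route as the paper: reduce each part to Lemma \ref{lem1}(1), equation \eqref{eq-lem1-2}, with $h=\sqrt{\Delta}$, after using the trigonometric difference identities to extract factors $\sin^k(\pi r l_i\sqrt{\Delta}/2)$ lying in $\boldsymbol G_{1/2}$ or $\boldsymbol G_1$ (the paper's $g_1,\ldots,g_5$), and then multiply by $\Delta^{1+\beta}$ to match $\Delta^{\beta \tand (\gamma+1)}$. The class identifications and the threshold bookkeeping at $\beta=\gamma+1$ all line up with the paper's argument.
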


\begin{proof}
(1) Since it holds that
\begin{equation*}
|F_{\Delta}(y,z)| \le 
\Delta^{1+\beta} \sum_{ l_1, l_2\ge1} |f(\lambda_{l_1, l_2}\Delta)|
= \Delta^{1+\beta} \sum_{ l_1, l_2\ge1} 
|f(\lambda_{l_1, l_2}\Delta) g_1(l_1 \sqrt{\Delta}, l_2 \sqrt{\Delta})|,
\end{equation*}
and from \eqref{eq-6-2} and \eqref{eq-lem1-2} with $\gamma=0$,
\begin{align*}
\sum_{ l_1, l_2\ge1} 
|f(\lambda_{l_1, l_2}\Delta) g_1(l_1 \sqrt{\Delta}, l_2 \sqrt{\Delta})|
&=
\begin{cases}
\OO (\Delta^{-1}), & \beta < 1,
\\
\OO (-\Delta^{-1} \log \Delta), & \beta = 1,
\\
\OO (\Delta^{-\beta}), & \beta >1,
\end{cases}
\end{align*}
we get the desired result. 

(2) Using the trigonometric identity
\begin{equation}\label{eq-6-4}
\cos(a+b)-\cos(a)= -2 \sin\Bigl(\frac{b}{2}\Bigr)\sin\Bigl(a+\frac{b}{2}\Bigr),
\end{equation}
and $\delta=r\sqrt{\Delta}$, we have
\begin{align*}
|D_{1,\delta} F_{\Delta}(y,z)| 
&\lesssim \Delta^{1+\beta}
\sum_{ l_1, l_2\ge1} 
|f (\lambda_{ l_1, l_2}\Delta)| 
\Bigl|\sin\Bigl(\frac{\pi r l_1\sqrt{\Delta}}{2}\Bigr)\Bigr|
\\
&\lesssim \Delta^{1+\beta}
\sum_{l_1, l_2\ge1} 
|f(\lambda_{l_1, l_2}\Delta)g_2(l_1 \sqrt{\Delta}, l_2 \sqrt{\Delta})|.
\end{align*}
Since we find from \eqref{eq-6-2} and \eqref{eq-lem1-2} with $\gamma =1/2$ that
\begin{align*}
\sum_{l_1, l_2\ge1} 
|f(\lambda_{l_1, l_2}\Delta)g_2(l_1 \sqrt{\Delta}, l_2 \sqrt{\Delta})|
&=
\begin{cases}
\OO (\Delta^{-1}), & \beta < 3/2,
\\
\OO (-\Delta^{-1} \log \Delta), & \beta = 3/2,
\\
\OO (\Delta^{1/2-\beta}), & \beta > 3/2,
\end{cases}
\end{align*}
the desired result can be obtained.

(3)-(5) 
By the trigonometric identities \eqref{eq-6-4} and 
\begin{equation}\label{eq-6-5}
\cos(a+2b)-2\cos(a+b)+\cos(a)=-4\sin^2\Bigl(\frac{b}{2}\Bigr)\cos(a+b),
\end{equation}
we find from \eqref{eq-6-2} and \eqref{eq-lem1-2} with $\gamma=1$ that
\begin{align*}
|D_{1,\delta} F_{\Delta}(0,z)| 
&\le \Delta^{1 +\beta}
\sum_{ l_1, l_2\ge1} 
|f (\lambda_{ l_1, l_2}\Delta)| (1-\cos(\pi r l_1\sqrt{\Delta})) 
\\
&\lesssim \Delta^{1+\beta}
\sum_{l_1, l_2\ge1} 
|f(\lambda_{l_1, l_2}\Delta) g_3(l_1 \sqrt{\Delta}, l_2 \sqrt{\Delta})|
\\
&=
\begin{cases}
\OO (\Delta^\beta), & \beta <2,
\\
\OO (-\Delta^\beta \log \Delta), & \beta =2,
\\
\OO(\Delta^2), & \beta >2,
\end{cases}
\\
|D_{1,\delta}^2 F_{\Delta}(y,z)|
&\lesssim 
\Delta^{1 +\beta}
\sum_{ l_1, l_2\ge1} 
|f(\lambda_{ l_1, l_2}\Delta)| 
\sin^2\Bigl(\frac{\pi r l_1\sqrt{\Delta}}{2}\Bigr) 
\\
&\lesssim \Delta^{1+\beta}
\sum_{l_1, l_2\ge1} 
|f(\lambda_{l_1, l_2}\Delta) g_4(l_1 \sqrt{\Delta}, l_2 \sqrt{\Delta})|
\\
&=
\begin{cases}
\OO (\Delta^\beta), & \beta <2,
\\
\OO (-\Delta^\beta \log \Delta), & \beta =2,
\\
\OO(\Delta^2), & \beta >2,
\end{cases}
\\
|D_{1,\delta}D_{2,\delta} F_{\Delta}(y,z)|
&\lesssim 
\Delta^{1+\beta}
\sum_{ l_1, l_2\ge1} 
|f(\lambda_{ l_1, l_2}\Delta)| 
\biggl|\sin\Bigl(\frac{\pi r l_1\sqrt{\Delta}}{2}\Bigr)\biggr|
\biggl|\sin\Bigl(\frac{\pi r l_2\sqrt{\Delta}}{2}\Bigr)\biggr| 
\\
&\lesssim \Delta^{1+\beta}
\sum_{l_1, l_2\ge1} 
|f(\lambda_{l_1, l_2}\Delta) g_5(l_1 \sqrt{\Delta}, l_2 \sqrt{\Delta})|
\\
&=
\begin{cases}
\OO (\Delta^\beta), & \beta <2,
\\
\OO (-\Delta^\beta \log \Delta), & \beta =2,
\\
\OO(\Delta^2), & \beta >2.
\end{cases}
\end{align*}
\end{proof}

\begin{lem}\label{lem7}
For $\beta >0$, $\delta = r \sqrt{\Delta}$ and $y,z\in(0,2-\delta)$,
it holds that
\begin{itemize}
\item[(1)]
$D_{1,\delta} D_{2,\delta} F_{\Delta}(y,0)
= \OO \Bigl(\frac{\Delta^{1+ \beta \tand (3/2)}}{(y \land (2-\delta-y))^2}\Bigr)$,

\item[(2)]
$D_{1,\delta}^2 D_{2,\delta} F_{\Delta}(y,0)
= \OO \Bigl(
\frac{\Delta^{1+\beta \tand 2}}{(y \land (2-\delta-y))^2}
\Bigr)$,

\item[(3)]
$D_{1,\delta}^2 D_{2,\delta} F_{\Delta}(y,z)
= \OO \Bigl(
\frac{\Delta^{1+\beta \tand (3/2)}}{(y \land (2-\delta-y))(z \land (2-\delta-z))}
\Bigr)$,

\item[(4)]
$D_{1,\delta}^2 D_{2,\delta}^2 F_{\Delta}(y,z) 
= \OO \Bigl(
\frac{\Delta^{1+\beta \tand 2}}{(y \land (2-\delta-y))(z \land (2-\delta-z))}
\Bigr)$.

\end{itemize}
\end{lem}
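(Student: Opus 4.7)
The plan is to apply the trigonometric identities \eqref{eq-6-4} and \eqref{eq-6-5} to rewrite each iterated difference of $F_\Delta$ as a series of the form
\begin{equation*}
C\,\Delta^{1+\beta} \sum_{l_1,l_2 \ge 1} f(\lambda_{l_1,l_2}\Delta)\, g(l_1\sqrt{\Delta}, l_2\sqrt{\Delta})\, \tau_1(\pi l_1 y')\,\tau_2(\pi l_2 z'),
\end{equation*}
for suitable $g \in \widetilde{\boldsymbol G}_{\gamma_1,\gamma_2} \cap \boldsymbol G_\gamma^2$, trigonometric functions $\tau_1,\tau_2 \in \{\sin,\cos\}$ and shifted arguments $y', z'$ lying in intervals of the form $(c\delta, 2-c\delta)$. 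Lemma \ref{lem5} then delivers the required bound after multiplication by the prefactor $\Delta^{1+\beta}$.

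Concretely, for each case I compute the action of the difference operators on the cosine factors: $D_{1,\delta}\cos(\pi l_1 y) = -2\sin(\pi l_1\delta/2)\sin(\pi l_1(y+\delta/2))$ by \eqref{eq-6-4}, and $D_{1,\delta}^2\cos(\pi l_1 y) = -4\sin^2(\pi l_1\delta/2)\cos(\pi l_1(y+\delta))$ by \eqref{eq-6-5}; likewise for $z$, and at $z=0$ the factor $D_{2,\delta}\cos(\pi l_2\cdot 0) = -2\sin^2(\pi l_2\delta/2)$ is absorbed into $g$. With $\delta = r\sqrt{\Delta}$, the resulting $g$ functions are (up to the change of variables $x \mapsto \pi rx/2$): for (1) $g(x,y)=\sin(x)\sin^2(y)$, so $(\gamma_1,\gamma_2,\gamma)=(1/2,1,3/2)$; for (2) $g(x,y)=\sin^2(x)\sin^2(y)$, so $(\gamma_1,\gamma_2,\gamma)=(1,1,2)$ and one applies Lemma \ref{lem5}-(1) with trivial $z$-dependence; for (3) $g(x,y)=\sin^2(x)\sin(y)$, so $(\gamma_1,\gamma_2,\gamma)=(1,1/2,3/2)$; and for (4) $g(x,y)=\sin^2(x)\sin^2(y)$, so $(\gamma_1,\gamma_2,\gamma)=(1,1,2)$. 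Up to obvious rescalings these coincide with the model $g_j$ listed in \eqref{eq-6-3}.

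I then invoke Lemma \ref{lem5}-(1) in cases (1), (2) and Lemma \ref{lem5}-(2) in cases (3), (4) with $h=\sqrt{\Delta}$. In every case one checks that $0 \land (2\underline{\gamma}-1) = 0$, so the $h$-rate factor reduces to $1$, while $\gamma \lor (\overline{\gamma}+1/2)$ equals $3/2$ in (1), (3) and $2$ in (2), (4). Multiplying the three-regime estimate of Lemma \ref{lem5} by $\Delta^{1+\beta}$ and using the $\tand$-notation produces exactly the factors $\Delta^{1+\beta\tand(3/2)}$ in (1), (3) and $\Delta^{1+\beta\tand 2}$ in (2), (4). The denominator $(y\land(2-\delta-y))$ (and the analogous factor for $z$) comes directly from Lemma \ref{lem5} after noting that the shifted arguments $y+\delta/2$, $y+\delta$ lie in $(c\delta, 2-c\delta)$ and that $\min(y', 2-y') \ge \min(y, 2-\delta-y)$.

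The only delicate point is bookkeeping: one must verify that the $g$ produced by each difference calculation really lies in the intersection $\widetilde{\boldsymbol G}_{\gamma_1,\gamma_2} \cap \boldsymbol G_\gamma^2$ with the announced parameters, so that Lemma \ref{lem5} applies without loss. Since each $g$ is a product $g_1(x)g_2(y)$ of bounded $C^\infty$ functions that vanish to the claimed order at the origin, the tensor structure \eqref{eq-6-3} gives the $\widetilde{\boldsymbol G}$ membership automatically, and the $\boldsymbol G_\gamma^2$ membership follows from $\gamma_1+\gamma_2 = \gamma$ together with the boundedness of the derivatives of the sine factors. Once this is checked, each of (1)--(4) is a direct application of Lemma \ref{lem5}, and no further estimation is needed.
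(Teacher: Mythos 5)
Your proposal is correct and follows essentially the same route as the paper's proof: apply the identities \eqref{eq-6-4}--\eqref{eq-6-5}, recognize the resulting weights as the model functions $g_6$--$g_9$ of \eqref{eq-6-3} (your $\sin^2(y/2)$ versus the paper's $\cos(y)-1$ is an immaterial rescaling), and invoke Lemma \ref{lem5}-(1) for (1)--(2) and Lemma \ref{lem5}-(2) for (3)--(4) with exactly the parameters $(\gamma_1,\gamma_2,\gamma)$ the paper uses. The one detail you gloss over is that Lemma \ref{lem5} is stated for $G(x,y)=f(x^2+y^2)g(x,y)$, i.e.\ for the pure Laplacian eigenvalues, so one extra application of \eqref{eq-lem1-1} is needed to replace $f(\lambda_{l_1,l_2}\Delta)$ by $f(\theta_2\pi^2(l_1^2+l_2^2)\Delta)$; this is what produces the secondary $\OO(\Delta^{1+\beta \tand (3/2)})$ and $\OO(\Delta^{1+\beta \tand 2})$ terms in the paper's proof, which are absorbed into the stated bounds.
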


\begin{proof}
(1), (2) Using \eqref{eq-6-3}-\eqref{eq-6-5},
\eqref{eq-lem1-1} and \eqref{eq-lem5-1} with 
$(\gamma_1, \gamma_2,\gamma) = (1/2, 1, 3/2)$ or $\gamma = 2$,
we obtain
\begin{align*}
&|D_{1,\delta} D_{2,\delta} F_{\Delta}(y,0)|
\\
&\lesssim
\Delta^{1+\beta}
\Biggl|
\sum_{ l_1, l_2\ge1} 
f(\lambda_{ l_1, l_2}\Delta) 
\sin\Bigl(\frac{\pi r l_1\sqrt{\Delta}}{2}\Bigr)
(\cos(\pi r  l_2\sqrt{\Delta})-1)
\sin\Bigl(\pi  l_1 \Bigl(y+\frac{\delta}{2}\Bigr)\Bigr)
\Biggr|
\\
&\lesssim
\Delta^{1+\beta}
\Biggl|
\sum_{ l_1, l_2\ge1} 
f(\lambda_{ l_1, l_2}\Delta) 
g_6(l_1 \sqrt{\Delta}, l_2 \sqrt{\Delta})
\sin\Bigl(\pi  l_1 \Bigl(y+\frac{\delta}{2}\Bigr)\Bigr)
\Biggr|
\\
&=
\OO \biggl( \frac{\Delta^{1+\beta \tand (3/2)}}{(y \land (2-\delta-y))^2} \biggr)
+\OO ( \Delta^{1+\beta \tand (3/2)} )
\\
&= \OO \biggl( \frac{\Delta^{1+\beta \tand (3/2)}}{(y \land (2-\delta-y))^2} \biggr)
\end{align*}
and
\begin{align*}
&|D_{1,\delta}^2 D_{2,\delta} F_{\Delta}(y,0)|
\\
&\lesssim
\Delta^{1+\beta}
\Biggl|
\sum_{ l_1, l_2\ge1} 
f(\lambda_{ l_1, l_2}\Delta) 
\sin^2 \Bigl(\frac{\pi r l_1\sqrt{\Delta}}{2}\Bigr)
(\cos(\pi r  l_2\sqrt{\Delta})-1)
\cos(\pi  l_1 (y+\delta))
\Biggr|
\\
&\lesssim
\Delta^{1+\beta}
\Biggl|
\sum_{ l_1, l_2\ge1} 
f(\lambda_{ l_1, l_2}\Delta) 
g_8(l_1\sqrt{\Delta}, l_2\sqrt{\Delta})
\cos(\pi l_1 (y+\delta))
\Biggr|
\\
&=
\OO \biggl( \frac{\Delta^{1+\beta \tand 2}}{(y \land (2-\delta-y))^2} \biggr)
+\OO ( \Delta^{1+\beta \tand 2} )
\\
&= \OO \biggl( \frac{\Delta^{1+\beta \tand 2}}{(y \land (2-\delta-y))^2} \biggr).
\end{align*}

(3), (4) 
It holds from \eqref{eq-6-3}-\eqref{eq-6-5}, \eqref{eq-lem1-1} and \eqref{eq-lem5-2} 
with $(\gamma_1, \gamma_2, \gamma) = (1, 1/2, 3/2)$ or $\gamma = 2$ that
\begin{align*}
&|D_{1,\delta}^2 D_{2,\delta} F_{\Delta}(y,z)|
\\
&\lesssim
\Delta^{1/2+\beta}
\Biggl|
\Delta^{1/2}
\sum_{ l_1, l_2\ge1} 
f(\lambda_{ l_1, l_2}\Delta) 
\sin^2 \Bigl(\frac{\pi r l_1\sqrt{\Delta}}{2}\Bigr)
\sin\Bigl(\frac{\pi r l_2\sqrt{\Delta}}{2}\Bigr)
\\
&\qquad \qquad \times
\cos(\pi  l_1 (y+\delta))
\sin\Bigl(\pi  l_2 \Bigl(z+\frac{\delta}{2}\Bigr)\Bigr)
\Biggr|
\\
&\lesssim
\Delta^{1/2+\beta}
\Biggl|
\Delta^{1/2}
\sum_{ l_1, l_2\ge1} 
f(\lambda_{ l_1, l_2}\Delta) 
g_7(l_1\sqrt{\Delta}, l_2\sqrt{\Delta})
\cos(\pi  l_1 (y+\delta))
\sin\Bigl(\pi  l_2 \Bigl(z+\frac{\delta}{2}\Bigr)\Bigr)
\Biggr|
\\
&= \OO \biggl(
\frac{\Delta^{1+\beta \tand (3/2)}}{(y \land (2-\delta-y))(z \land (2-\delta-z))}
\biggr) +\OO ( \Delta^{1+\beta \tand (3/2)} )
\\
&= \OO \biggl(
\frac{\Delta^{1+\beta \tand (3/2)}}{(y \land (2-\delta-y))(z \land (2-\delta-z))}
\biggr)
\end{align*}
and
\begin{align*}
&|D_{1,\delta}^2 D_{2,\delta}^2 F_{\Delta}(y,z)|
\\
&\lesssim
\Delta^{1+\beta}
\Biggl|
\sum_{ l_1, l_2\ge1} 
f(\lambda_{ l_1, l_2}\Delta) 
\sin^2 \Bigl(\frac{\pi r l_1\sqrt{\Delta}}{2}\Bigr)
\sin^2 \Bigl(\frac{\pi r l_2\sqrt{\Delta}}{2}\Bigr)
\cos(\pi  l_1 (y+\delta))
\cos(\pi  l_2 (z+\delta))
\Biggr|
\\
&\lesssim
\Delta^{1+\beta}
\Biggl|
\sum_{ l_1, l_2\ge1} 
f(\lambda_{ l_1, l_2}\Delta) 
g_9(l_1\sqrt{\Delta},l_2\sqrt{\Delta})
\cos(\pi  l_1 (y+\delta))
\cos(\pi  l_2 (z+\delta))
\Biggr|
\\
&= 
\OO \biggl(
\frac{\Delta^{1+\beta \tand 2}}{(y \land (2-\delta-y))(z \land (2-\delta-z))}
\biggr) + \OO (\Delta^{1+\beta \tand 2})
\\
&= 
\OO \biggl(
\frac{\Delta^{1+\beta \tand 2}}{(y \land (2-\delta-y))(z \land (2-\delta-z))}
\biggr).
\end{align*}
\end{proof}

\begin{lem}\label{lem8}
For $\beta >0$, $\delta=r\sqrt{\Delta}$, $y\in(0,2-\delta)$ 
and $0 \le K_1,K_2 \le 1/\delta$, it holds that
\begin{itemize}
\item[(1)]
$D_{1,\delta} D_{2,\delta}^2 F_{\Delta}(y, K_2\delta)
= \OO \Bigl(
\frac{\Delta^{1/2+\beta \tand (3/2)}}{(y \land (2-\delta-y))(K_2 +1)}
\Bigr) + \OO(\Delta^{1+\beta \tand (3/2)})$,

\item[(2)]
$D_{1,\delta}^2 D_{2,\delta}^2 F_{\Delta}(y, K_2\delta)
= \OO \Bigl(
\frac{\Delta^{1/2+\beta \tand 2}}{(y \land (2-\delta-y))(K_2 +1)}
\Bigr) + \OO(\Delta^{1+\beta \tand 2})$,

\item[(3)]
$D_{1,\delta}^2 D_{2,\delta}^2 F_{\Delta}(K_1\delta, K_2\delta)
= \OO \Bigl(
\frac{\Delta^{\beta \tand 2}}{(K_1 + 1)(K_2 +1)}
\Bigr) + \OO(\Delta^{1+\beta \tand 2})$.
\end{itemize}

\end{lem}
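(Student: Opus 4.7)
The plan is to follow the same strategy as in the proof of Lemma \ref{lem7}: apply the trigonometric identities \eqref{eq-6-4} and \eqref{eq-6-5} to rewrite each $D_{1,\delta}$, $D_{2,\delta}$, $D_{1,\delta}^2$, $D_{2,\delta}^2$ acting on the cosines in the definition of $F_\Delta$ as a sine/sine-squared factor in $(l_j \sqrt\Delta)$ together with a shifted cosine or sine. After this reduction, each of the three series is of the form
\[
\Delta^{1+\beta}\sum_{l_1,l_2\ge1} f(\lambda_{l_1,l_2}\Delta)\, g(l_1\sqrt{\Delta}, l_2\sqrt{\Delta})\, \tau_1(\pi l_1 u_1)\, \tau_2(\pi l_2 u_2),
\]
with $g$ belonging to the list $g_6,\ldots, g_9$ from \eqref{eq-6-3}, $\tau_i\in\{\sin,\cos\}$, and shifted arguments $u_1, u_2$. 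I can then estimate it by Lemma \ref{lem5}-(2), exactly as in the proof of Lemma \ref{lem7}, which already delivers a bound of the form (denominator-dependent main term) $+$ (uniform remainder of the type $\OO(\Delta^{1+\beta\tand\gamma})$).

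The new feature relative to Lemma \ref{lem7} is that at least one of the evaluation points is of the form $K_j\delta$, not bounded away from $0$. The trigonometric shift turns the corresponding $u_j$ into $(K_j+1/2)\delta$ or $(K_j+1)\delta$. The standing hypothesis $K_j\le 1/\delta$ guarantees $u_j\in(0,2)$, and one checks
\[
(K_j+1)\delta\,\land\, \bigl(2-(K_j+1)\delta\bigr)\;\gtrsim\; (K_j+1)\delta \;=\; r(K_j+1)\sqrt{\Delta},
\]
so the distance-to-boundary denominator coming out of Lemma \ref{lem5}-(2) becomes $\Delta^{-1/2}/(K_j+1)$. Each such argument therefore absorbs one factor of $\sqrt{\Delta}$ from the prefactor $\Delta^{1+\beta}$ and contributes an extra $1/(K_j+1)$; this is precisely the loss exhibited in the claimed bounds (once in (1) and (2), twice in (3)).

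Concretely, for (1) the $g$ function is $g(x,y)=\sin(x)\sin^2(y)$, i.e.\ $g_7$ with the coordinates swapped, which lies in $\widetilde{\boldsymbol G}_{1/2,1}\cap\boldsymbol G^2_{3/2}$; the shifted arguments are $y+\delta/2$ and $(K_2+1)\delta$, and Lemma \ref{lem5}-(2) with $(\gamma_1,\gamma_2,\gamma)=(1/2,1,3/2)$ yields the advertised $\Delta^{1/2+\beta\tand(3/2)}/((y\land(2-\delta-y))(K_2+1))$ plus the uniform $\OO(\Delta^{1+\beta\tand(3/2)})$. For (2) the relevant $g$ is $g_9\in\widetilde{\boldsymbol G}_{1,1}\cap\boldsymbol G^2_2$, the shifted arguments are $y+\delta$ and $(K_2+1)\delta$, and Lemma \ref{lem5}-(2) with $(\gamma_1,\gamma_2,\gamma)=(1,1,2)$ produces the sharper $\beta\tand 2$ rate. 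Part (3) is structurally identical to (2), but the same boundary lower bound is applied to \emph{both} coordinates, giving the double factor $1/((K_1+1)(K_2+1))$ and absorbing two copies of $\Delta^{-1/2}$ against $\Delta^{1+\beta}$, which leaves $\Delta^{\beta\tand 2}/((K_1+1)(K_2+1))$.

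The main obstacle is purely bookkeeping: one must track the $\beta\tand$ thresholds in Lemma \ref{lem5}-(2) (which depend on $\overline\gamma+1/2$ and $\gamma$) and verify that for each of the three cases the corresponding threshold coincides with the exponent $\beta\tand(3/2)$ in (1) and $\beta\tand 2$ in (2) and (3). The inequality $y \land (2-\delta-y)\lesssim(y+c\delta)\land(2-y-c\delta)$ for the constants $c\in\{1/2,1\}$ that actually occur, together with the lower bound on $(K_j+1)\delta \land (2-(K_j+1)\delta)$, then turn the output of Lemma \ref{lem5}-(2) into the exact form stated in the lemma. No new analytic ingredient beyond the tools already developed for Lemma \ref{lem7} is required.
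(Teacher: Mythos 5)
Your proposal is correct and follows essentially the same route as the paper, which proves Lemma \ref{lem8} by rerunning the arguments of Lemma \ref{lem7}-(3),(4) at the evaluation points $K_j\delta$ and observing that the boundary-distance factor $(K_j+1)\delta\land(2-(K_j+1)\delta)\gtrsim (K_j+1)\delta=r(K_j+1)\sqrt{\Delta}$ trades one power of $\sqrt{\Delta}$ for a factor $1/(K_j+1)$. Your identification of the relevant $g$ classes and the Lemma \ref{lem5}-(2) parameters matches the computations already carried out in the proof of Lemma \ref{lem7}, so no gap remains.
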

\begin{proof}
In a similar way to the proof of (3) in Lemma \ref{lem7}, 
we get the first statement:
\begin{align*}
|D_{1,\delta} D_{2,\delta}^2 F_{\Delta}(y,K_2 \delta)|
&=
\OO \biggl(
\frac{\Delta^{1+\beta \tand (3/2)}}{(y \land (2-\delta-y))(K_2 +1)\delta}
\biggr)
+ \OO(\Delta^{1+\beta \tand (3/2)})
\\
&=
\OO \biggl(
\frac{\Delta^{1/2+\beta \tand (3/2)}}{(y \land (2-\delta-y))(K_2 +1)}
\biggr)
+ \OO(\Delta^{1+\beta \tand (3/2)}).
\end{align*}
Similarly, we can show the rest of the statements by using (4) in Lemma \ref{lem7}.
\end{proof}

\subsubsection{Auxiliary results for consistency of the estimator}
We need the following result to get consistency of the estimators 
$\widehat \theta_2$, $\widehat \theta_1$ and $\widehat \eta_1$.
\begin{lem}\label{lem11}
Let $\alpha \in (0,3)$ and $r \in (0,\infty)$. 
$\phi_{r,\alpha}(\theta_2)$ given in \eqref{psi} is strictly decreasing 
for $\theta_2>0$.
\end{lem}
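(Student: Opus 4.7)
The plan is to exploit the alternative integral representation of $\phi_{r,\alpha}$ in which the integrand is manifestly non-negative. Combining the identity used in the proof of Proposition \ref{prop1} (equation (4.7) of \cite{TKU2024b}) with the definition $f_\alpha(w)=(1-\ee^{-w})/w^{1+\alpha}$, one obtains
\begin{equation*}
\phi_{r,\alpha}(\theta_2)=\frac{C_\alpha}{\theta_2}\iint_{\mathbb R_{+}^2}\frac{1-\ee^{-\theta_2\pi^2(x^2+y^2)}}{(x^2+y^2)^{1+\alpha}}\,h(x,y)\,\dd x\,\dd y,
\end{equation*}
where $C_\alpha=4\pi^{-2(1+\alpha)}>0$ and $h(x,y)=(1-\cos(\pi r x))(1-\cos(\pi r y))\ge0$. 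In polar coordinates the integrand is $\OO(\rho^{5-2\alpha})$ near the origin and decays exponentially at infinity, so the integral converges for every $\alpha\in(0,3)$.

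Setting $u=\theta_2$ and $\Psi(u)=u\phi_{r,\alpha}(u)$, strict monotonicity of $\phi_{r,\alpha}$ is equivalent to $u\Psi'(u)-\Psi(u)<0$ for every $u>0$. Differentiating under the integral sign (justified by dominated convergence, since the same polar-coordinate bound dominates the $u$-derivative on compact subsets of $(0,\infty)$) and collecting terms with $z=u\pi^2(x^2+y^2)$ yields
\begin{equation*}
u\Psi'(u)-\Psi(u)=C_\alpha\iint_{\mathbb R_{+}^2}\frac{h(x,y)}{(x^2+y^2)^{1+\alpha}}\bigl((z+1)\ee^{-z}-1\bigr)\,\dd x\,\dd y.
\end{equation*}

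The remaining step is the scalar inequality $p(z):=(z+1)\ee^{-z}-1<0$ for every $z>0$, which follows from $p(0)=0$ together with $p'(z)=-z\ee^{-z}<0$ on $(0,\infty)$. For $u>0$ one has $z>0$ throughout $\mathbb R_{+}^2\setminus\{(0,0)\}$, and $h(x,y)/(x^2+y^2)^{1+\alpha}$ is strictly positive off a countable union of lines; hence the integrand is strictly negative on a set of positive measure, giving $\phi_{r,\alpha}'(u)<0$ for all $u>0$. No genuine obstacle is anticipated: the crux of the argument is the elementary sign of $p$, and all integrability and differentiation-under-the-integral steps reduce to the polar-coordinate estimates already developed in the proof of Proposition \ref{prop1}.
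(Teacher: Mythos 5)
Your proof is correct and follows essentially the same route as the paper's: both differentiate an integral representation of $\phi_{r,\alpha}$ with a manifestly non-negative trigonometric kernel and reduce the claim to the elementary inequality $(1+z)\ee^{-z}<1$ for $z>0$; the paper works with the one-dimensional Bessel form (using $J_0(\sqrt2 x)-2J_0(x)+1=\frac{2}{\pi}\int_0^{\pi/2}(1-\cos(x\cos t))(1-\cos(x\sin t))\,\dd t\ge0$ together with a scaling in $r$), while you stay with the two-dimensional Cartesian form, which is the same integral before the angular integration. One cosmetic slip: the integrand of $\Psi(u)$ decays only polynomially, like $(x^2+y^2)^{-(1+\alpha)}$, at infinity (since $1-\ee^{-z}\to1$), not exponentially, but convergence for $\alpha>0$ still holds and the rest of the argument is unaffected.
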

\begin{proof}
Let
\begin{equation*}
\phi_{\alpha}(t)
= \frac{1}{t} \int_0^\infty 
\frac{1-\ee^{-t x^2}}{x^{1+2\alpha}} (J_0(\sqrt{2}x)-2J_0(x)+1) \dd x, 
\quad t>0.
\end{equation*}
Noting that
\begin{equation*}
(1+x)\ee^{-x}-1 \lesssim 
\begin{cases}
x^2 & (x \to 0),
\\
1 & (x \to \infty),
\end{cases}
\end{equation*}
\begin{equation*}
J_0(\sqrt{2}x)-2J_0(x)+1 \lesssim 
\begin{cases}
x^4 & (x \to 0),
\\
1 & (x \to \infty),
\end{cases}
\end{equation*}
we find that for $\alpha \in (0,3)$,
$t \mapsto \phi_{\alpha}(t)$ is differentiable and 
\begin{align*}
\phi_{\alpha}'(t)
&= \frac{1}{t^2} \biggl(
\int_0^\infty \frac{t x^2 \ee^{-t x^2}}{x^{1+2\alpha}}
(J_0(\sqrt{2}x)-2J_0(x)+1) \dd x
\\
&\qquad -\int_0^\infty \frac{1-\ee^{-t x^2}}{x^{1+2\alpha}}
(J_0(\sqrt{2}x)-2J_0(x)+1) \dd x
\biggr)
\\
&= \frac{1}{t^2} \int_0^\infty \frac{(1+t x^2) \ee^{-t x^2}-1}{x^{1+2\alpha}}
(J_0(\sqrt{2}x)-2J_0(x)+1) \dd x.
\end{align*}
By the inequalities $(1+x) \ee^{-x} \le 1$ and 
\begin{equation*}
J_0(\sqrt{2} x)-2J_0(x)+1
= \frac{2}{\pi}\int_0^{\pi/2} (1 -\cos(x \cos(t)))(1 -\cos(x \sin(t))) \dd t
\ge 0
\end{equation*}
for $x \ge 0$, we obtain $\phi_\alpha'(t) < 0$ for $t >0$.
Therefore, $\phi_\alpha(t)$ is strictly decreasing for $t>0$.
The representation
$\phi_{r,\alpha}(\theta_2) = \frac{2 r^{2(\alpha -1)}}{\pi} \phi_\alpha(\theta_2/r^2)$ 
yields the desired result.
\end{proof}

\subsection{Proof of Theorem \ref{th2}}
For $\mf l_1, \mf l_2 \ge 1$ such that $\lb X_0, e_{\mf l_1, \mf l_2} \rb \neq 0$,
we define 
\begin{equation*}
\mathcal N_{i,\lambda}
= X_{\widetilde t_i}-\ee^{-\lambda \Delta_n} X_{\widetilde t_{i-1}},
\end{equation*}
\begin{equation*}
v_{\mf l_1,\mf l_2}(y,z:\kappa,\eta) 
= 2\sin(\pi \mf l_1 y) \ee^{\kappa y/2} \sin(\pi \mf l_2 z) \ee^{\eta z/2},
\end{equation*}
\begin{equation*}
v_{\mf l_1,\mf l_2}^*(y,z) = v_{\mf l_1,\mf l_2}(y,z:\kappa^*,\eta^*),
\quad
\widehat v_{\mf l_1,\mf l_2}(y,z) = v_{\mf l_1,\mf l_2}(y,z:\widehat \kappa, \widehat \eta),
\end{equation*}
\begin{align*}
\widehat {\mathcal M}_{i, \lambda, \mf l_1,\mf l_2} &= 
\widehat x_{\mf l_1,\mf l_2}(\widetilde t_i) 
-\ee^{-\lambda \Delta_n} \widehat x_{\mf l_1,\mf l_2}(\widetilde t_{i-1})
\\
&= \sum_{j=1}^{M_1} \sum_{k=1}^{M_2} \int_{z_{k-1}}^{z_k} \int_{y_{j-1}}^{y_j} 
\mathcal N_{i,\lambda}(y_{j-1},z_{k-1})
\widehat v_{\mf l_1,\mf l_2}(y,z) \dd y \dd z
\\
&= \int_0^1 \int_0^1 \Psi_M \mathcal N_{i,\lambda}(y,z)
\widehat v_{\mf l_1,\mf l_2}(y,z) \dd y \dd z,
\\
\mathcal M_{i, \lambda, \mf l_1,\mf l_2} &= 
x_{\mf l_1,\mf l_2}(\widetilde t_i) 
-\ee^{-\lambda \Delta_n} x_{\mf l_1,\mf l_2}(\widetilde t_{i-1})
\\
&=
\int_0^1 \int_0^1 \mathcal N_{i,\lambda} (y,z)
v_{\mf l_1,\mf l_2}^*(y,z) \dd y \dd z,
\end{align*}
\begin{align*}
\widehat {\mathcal A}_{n,\mf l_1,\mf l_2} &= \sup_{\lambda} \sum_{i=1}^n 
|\widehat {\mathcal M}_{i, \lambda, \mf l_1,\mf l_2} 
-\mathcal M_{i, \lambda, \mf l_1,\mf l_2}|^2,
\\
\widehat {\mathcal B}_{n,\mf l_1,\mf l_2} &= \sum_{i=1}^n 
|\widehat x_{\mf l_1,\mf l_2}(\widetilde t_i) 
-x_{\mf l_1,\mf l_2}(\widetilde t_i)|^2,
\\
\mathcal C_{n,\mf l_1,\mf l_2} &= 
\sup_{\lambda} \sum_{i=1}^n \mathcal M_{i, \lambda, \mf l_1,\mf l_2}^2.
\end{align*}
We then obtain the following result.
\begin{prop}\label{prop3}
Let $\alpha, \alpha_0 \in (0,3)$.
Assume that [A1]$_{\alpha_0}$, [A2] and [B1]$_{\alpha, \alpha_0}$ hold.
\begin{enumerate}
\item[(1)]
For a sequence $\{ r_{n,\epsilon} \}$ such that
\begin{equation*}
n r_{n,\epsilon} 
\biggl( \frac{1}{(M_1 \land M_2)^{2(\alpha_0 \tand 1)}} 
\lor \frac{1}{n^{\alpha_0 \tand 2} \mathcal R_{\alpha,\alpha_0}} \biggr)
\to 0, 
\end{equation*}
\begin{equation*}
n \epsilon^2 r_{n,\epsilon} \biggl( 
\frac{1}{(M_1 \land M_2)^{2(\alpha \tand 1)}}
\lor \frac{1}{n^{\alpha \tand 1} \mathcal R_{\alpha,\alpha_0}} \biggr)
\to 0,
\end{equation*}
it follows that $r_{n,\epsilon} \widehat {\mathcal A}_{n,\mf l_1, \mf l_2} = \oo_\PP(1)$
as $n \to \infty$, $M_1 \land M_2 \to \infty$ and $\epsilon \to 0$.

\item[(2)]
For a sequence $\{ s_{n,\epsilon} \}$ such that
\begin{equation*}
n s_{n,\epsilon} 
\biggl( \frac{1}{(M_1 \land M_2)^{2(\alpha_0 \tand 1)}}
\lor \frac{1}{\mathcal R_{\alpha,\alpha_0}} \biggr) \to 0,
\quad
\frac{n \epsilon^2 s_{n,\epsilon}}{(M_1 \land M_2)^{2(\alpha \tand 1)}} \to 0,
\end{equation*}
it follows that $s_{n,\epsilon} \widehat {\mathcal B}_{n,\mf l_1,\mf l_2} = \oo_\PP(1)$
as $n \to \infty$, $M_1 \land M_2 \to \infty$ and $\epsilon \to 0$.
\end{enumerate}
\end{prop}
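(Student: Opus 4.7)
The plan is to decompose
$\widehat{\mathcal M}_{i,\lambda,\mf l_1,\mf l_2} - \mathcal M_{i,\lambda,\mf l_1,\mf l_2}$
into a discretization term and a plug-in term:
\begin{align*}
\widehat{\mathcal M}_{i,\lambda,\mf l_1,\mf l_2} - \mathcal M_{i,\lambda,\mf l_1,\mf l_2}
&= \int_0^1 \int_0^1 (\Psi_M - \mathrm{Id}) \mathcal N_{i,\lambda}(y,z) \, v_{\mf l_1,\mf l_2}^*(y,z) \dd y \dd z \\
&\quad + \int_0^1 \int_0^1 \Psi_M \mathcal N_{i,\lambda}(y,z) \, (\widehat v_{\mf l_1,\mf l_2} - v_{\mf l_1,\mf l_2}^*)(y,z) \dd y \dd z.
\end{align*}
Since $\mathcal N_{i,\lambda} = X_{\widetilde t_i} - \ee^{-\lambda \Delta_n} X_{\widetilde t_{i-1}}$ is affine in $\ee^{-\lambda \Delta_n}$, which is uniformly bounded over the (compact) parameter space, the supremum in $\lambda$ reduces to bounding the $L^2$-norms of the fixed random fields $X_{\widetilde t_i}$ and $X_{\widetilde t_{i-1}}$ separately. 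Each of the two terms can therefore be estimated uniformly in $\lambda$.

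For the first term, the key quantity is $\|(\Psi_M - \mathrm{Id}) X_{\widetilde t_i}\|$. Splitting $X_t$ into the deterministic piece $\ee^{-tA_\theta}X_0$ and the stochastic convolution against $W^Q$, one performs a Taylor expansion between grid points on the Fourier expansion. By [A1]$_{\alpha_0}$, the deterministic piece yields squared error $\OO((M_1 \land M_2)^{-2(\alpha_0 \tand 1)})$. The stochastic piece, whose eigenmode variance is $\epsilon^2 \mu_{l_1,l_2}^{-\alpha}/(2\lambda_{l_1,l_2})$, is controlled via the Riemann-sum estimates of Lemma \ref{lem1} and Lemma \ref{lem5} applied to auxiliary functions in $\widetilde{\boldsymbol G}_{\gamma_1,\gamma_2}$, giving $\OO(\epsilon^2 (M_1 \land M_2)^{-2(\alpha \tand 1)})$. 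Summing over $i \le n$ and multiplying by $\|v_{\mf l_1,\mf l_2}^*\|^2 = \OO(1)$, this term contributes $\OO_\PP(n(M_1 \land M_2)^{-2(\alpha_0 \tand 1)} + n\epsilon^2(M_1 \land M_2)^{-2(\alpha \tand 1)})$ to $\widehat{\mathcal A}_{n,\mf l_1,\mf l_2}$.

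For the second term, Taylor expansion of $v_{\mf l_1,\mf l_2}(y,z:\kappa,\eta)$ around $(\kappa^*,\eta^*)$ yields $\sup_{y,z}|\widehat v_{\mf l_1,\mf l_2} - v_{\mf l_1,\mf l_2}^*| = \OO_\PP(|\widehat\kappa-\kappa^*| + |\widehat\eta-\eta^*|) = \OO_\PP(\mathcal R_{\alpha,\alpha_0}^{-1})$ by Theorem \ref{th1}. The Cauchy--Schwarz inequality bounds the integral by $\|\Psi_M \mathcal N_{i,\lambda}\| \cdot \OO_\PP(\mathcal R_{\alpha,\alpha_0}^{-1})$, so summing squares needs $\sum_{i=1}^n \|\mathcal N_{i,\lambda}\|^2$. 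Expanding the increment in eigenmodes and separating the deterministic and stochastic contributions gives a typical size $\OO(\Delta_n^{\alpha_0 \tand 2} + \epsilon^2 \Delta_n^{\alpha \tand 1})$ per increment, so the plug-in term contributes $\OO_\PP((n^{1-\alpha_0 \tand 2} + n^{1-\alpha \tand 1}\epsilon^2)\mathcal R_{\alpha,\alpha_0}^{-2})$. Combining the two bounds matches the rate $r_{n,\epsilon}^{-1}$ required in the hypothesis and gives $r_{n,\epsilon}\widehat{\mathcal A}_{n,\mf l_1,\mf l_2} = \oo_\PP(1)$. Part (2) follows from the same decomposition applied to $\widehat x_{\mf l_1,\mf l_2}(\widetilde t_i) - x_{\mf l_1,\mf l_2}(\widetilde t_i)$, with no temporal difference and no supremum in $\lambda$; the analogous sum $\sum_i \|X_{\widetilde t_i}\|^2 = \OO_\PP(n)$ (from the contractive dynamics) replaces the increment bound, which explains the weaker conditions on $s_{n,\epsilon}$. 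The principal obstacle will be the careful Fourier-side bookkeeping for the stochastic part of $(\Psi_M - \mathrm{Id})\mathcal N_{i,\lambda}$: identifying the correct $\widetilde{\boldsymbol G}_{\gamma_1,\gamma_2}$-class of the oscillatory factor is what produces the sharp exponent $\alpha \tand 1$ rather than a worse rate, and aligning this with the definition of $\mathcal R_{\alpha,\alpha_0}$ requires delicate tracking of when each of the three competing scales dominates.
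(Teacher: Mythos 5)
Your proposal is correct and follows essentially the same route as the paper's proof: the same discretization-plus-plug-in decomposition of $\widehat{\mathcal M}_{i,\lambda,\mf l_1,\mf l_2}-\mathcal M_{i,\lambda,\mf l_1,\mf l_2}$, the per-increment bound $\OO(n^{-\alpha_0\tand 2}+\epsilon^2 n^{-\alpha\tand 1})$ for $\EE[\sup_\lambda\|\mathcal N_{i,\lambda}\|^2]$, the grid bound $\OO((M_1\land M_2)^{-2(\alpha_0\tand 1)}+\epsilon^2(M_1\land M_2)^{-2(\alpha\tand 1)})$ for the $\Psi_M$-error, and Theorem \ref{th1} for $\widehat v_{\mf l_1,\mf l_2}-v^*_{\mf l_1,\mf l_2}$, with part (2) obtained by dropping the temporal difference exactly as in the paper. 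Two small remarks: in handling the supremum over $\lambda$ you must keep the increment structure (write $\mathcal N_{i,\lambda}=(S_{\Delta_n}-\ee^{-\lambda\Delta_n})X_{\widetilde t_{i-1}}+\epsilon\overline X_{\widetilde t_i,\widetilde t_{i-1}}$ rather than bounding $\|X_{\widetilde t_i}\|$ and $\|X_{\widetilde t_{i-1}}\|$ ``separately,'' which would only give $\OO_\PP(1)$ per increment and ruin the $n^{-\alpha_0\tand 2}$ gain — your later sentence shows you intend the former), and the spatial discretization error is controlled in the paper by the elementary lattice-sum bounds $\mathbf U_{M}(\beta)=\OO((M_1\land M_2)^{-2(\beta\tand 1)})$ of Lemma \ref{lem00}, not by the oscillatory Fourier Lemmas \ref{lem1} and \ref{lem5}, so the $\widetilde{\boldsymbol G}_{\gamma_1,\gamma_2}$ bookkeeping you anticipate as the principal obstacle is not actually needed here.
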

Proposition \ref{prop3} yields
\begin{align*}
n \widehat {\mathcal A}_{n,\mf l_1, \mf l_2} = \oo_\PP(1)
\quad \text{under [C1]$_{\alpha, \alpha_0}$},
\\
(n \epsilon)^2 \widehat {\mathcal A}_{n,\mf l_1, \mf l_2} = \oo_\PP(1)
\quad \text{under [C2]$_{\alpha, \alpha_0}$},
\\
(n \epsilon^4)^{-1} \widehat {\mathcal A}_{n,\mf l_1, \mf l_2} = \oo_\PP(1)
\quad \text{under [C3]$_{\alpha, \alpha_0}$},
\\
n \epsilon^{-2} \widehat {\mathcal A}_{n,\mf l_1, \mf l_2} = \oo_\PP(1)
\quad \text{under [C4]$_{\alpha, \alpha_0}$},
\\
(1 \lor (n\epsilon^2)^{-1}) \widehat {\mathcal B}_{n,\mf l_1, \mf l_2} = \oo_\PP(1)
\quad \text{under [C5]$_{\alpha, \alpha_0}$}.
\end{align*}
We will show Theorem \ref{th2} by using the above results. 
We define
\begin{align*}
\mathcal K_n^{\epsilon} (\lambda,\mu:\textbf x)
&=
\begin{pmatrix}
\epsilon^2 \partial_\lambda^2 \mathcal V_n^{\epsilon} (\lambda,\mu:\textbf x) 
& \frac{\epsilon}{\sqrt n}
\partial_\lambda \partial_\mu \mathcal V_n^{\epsilon} (\lambda,\mu:\textbf x)
\\
\frac{\epsilon}{\sqrt n}
\partial_\mu \partial_\lambda \mathcal V_n^{\epsilon} (\lambda,\mu:\textbf x)
& \frac{1}{n} \partial_\mu^2 \mathcal V_n^{\epsilon} (\lambda,\mu:\textbf x)
\end{pmatrix},
\\
\mathcal L_n^{\epsilon} (\lambda,\mu:\textbf x)
&=
\begin{pmatrix}
-\epsilon\partial_\lambda \mathcal V_n^{\epsilon} (\lambda,\mu:\textbf x)
\\
-\frac{1}{\sqrt n}\partial_\mu \mathcal V_n^{\epsilon} (\lambda,\mu:\textbf x)
\end{pmatrix}.
\end{align*}
We consider the following differences between the contrast function,
the score function, and the observed information 
based on the approximate process $\widehat {\textbf x}_{\mf l_1,\mf l_2}$
and those based on the process ${\textbf x}_{\mf l_1,\mf l_2}$, respectively.
\begin{align*}
\widehat {\mathbb U}_{1,\mf l_1,\mf l_2}(n,\epsilon)
&=\epsilon^{2}\sup_{\lambda,\mu}
\bigl|\mathcal V_n^{\epsilon} (\lambda,\mu:\widehat {\textbf x}_{\mf l_1,\mf l_2})
-\mathcal V_n^{\epsilon} (\lambda,\mu:{\textbf x}_{\mf l_1,\mf l_2})\bigr|,
\\
\widehat {\mathbb U}_{2,\mf l_1,\mf l_2}(n,\epsilon)
&=\frac{1}{n}\sup_{\lambda,\mu}
|\mathcal V_n^{\epsilon} (\lambda,\mu:\widehat {\textbf x}_{\mf l_1,\mf l_2})
-\mathcal V_n^{\epsilon} (\lambda,\mu:{\textbf x}_{\mf l_1,\mf l_2}) |,
\\
\widehat {\mathbb U}_{3,\mf l_1,\mf l_2}(n,\epsilon)
&=\sup_{\lambda,\mu}
|\mathcal K_n^{\epsilon} (\lambda,\mu:\widehat {\textbf x}_{\mf l_1,\mf l_2})
-\mathcal K_n^{\epsilon} (\lambda,\mu:{\textbf x}_{\mf l_1,\mf l_2})|,
\\
\widehat {\mathbb U}_{4,\mf l_1,\mf l_2}(n,\epsilon)
&=\sup_{\lambda,\mu}
|\mathcal L_n^{\epsilon} (\lambda,\mu:\widehat {\textbf x}_{\mf l_1,\mf l_2})
-\mathcal L_n^{\epsilon} (\lambda,\mu:{\textbf x}_{\mf l_1,\mf l_2})|.
\end{align*}
According to \cite{TKU2024a}, we can show consistency and asymptotic normality
of the estimators $\widehat \theta_0$ and $\widehat \mu_0$ by verifying 
$\widehat {\mathbb U}_{j,\mf l_1,\mf l_2}(n,\epsilon)$ ($j=1,\ldots,4$) 
converge to $0$ in probability under regular conditions.

\textit{Proof of (2).}
In order to show that the estimators have consistency, we show
$\widehat {\mathbb U}_{j,\mf l_1,\mf l_2}(n,\epsilon) = \oo_\PP(1)$ for $j = 1, 2$.
Using (5.44) and (5.45) in \cite{TKU2024a}, we obtain
\begin{align*}
&\widehat {\mathbb U}_{1,\mf l_1,\mf l_2}(n,\epsilon) 
\lor \widehat {\mathbb U}_{2,\mf l_1,\mf l_2}(n,\epsilon)
\\
& \lesssim 
(n\lor \epsilon^{-2}) \widehat {\mathcal A}_{n,\mf l_1, \mf l_2}
+\Bigl((n\lor \epsilon^{-2})^2(\epsilon^2 \lor n^{-1})
\widehat {\mathcal A}_{n,\mf l_1, \mf l_2} \Bigr)^{1/2}
\Bigl( (\epsilon^2 \lor n^{-1})^{-1} \mathcal C_{n,\mf l_1, \mf l_2}\Bigr)^{1/2}.
\end{align*}
Since $(\epsilon^2 \lor n^{-1})^{-1} \mathcal C_{n,\mf l_1, \mf l_2} = \OO_\PP(1)$, 
$n \lor \epsilon^{-2} \lor (n\lor \epsilon^{-2})^2(\epsilon^2 \lor n^{-1})
=(n\epsilon^4)^{-1} \lor (n\epsilon)^2$ and 
\begin{equation*}
\bigl((n\epsilon^4)^{-1} \lor (n\epsilon)^2 \bigr) 
\widehat {\mathcal A}_{n,\mf l_1, \mf l_2}
=\oo_\PP(1)
\end{equation*}
under [C2]$_{\alpha, \alpha_0}$ and [C3]$_{\alpha, \alpha_0}$,
we get the desired result.

For showing asymptotic normality of the estimators 
$\widehat \theta_0$ and $\widehat \mu_0$, we show
$\widehat {\mathbb U}_{j,\mf l_1,\mf l_2}(n,\epsilon) = \oo_\PP(1)$ 
for $j = 1,\ldots,4$. 
From the proof of (2)-(b) of Theorem 3.6 in \cite{TKU2024a}, it suffices to show 
\begin{equation*}
\bigl( (n\epsilon^4)^{-1} \lor (n\epsilon)^2 \lor n\epsilon^{-2} \bigr) 
\widehat {\mathcal A}_{n,\mf l_1, \mf l_2} = \oo_\PP(1),
\quad
\bigl( 1 \lor (n\epsilon^2)^{-1} \bigr) 
\widehat {\mathcal B}_{n,\mf l_1, \mf l_2} = \oo_\PP(1).
\end{equation*}
We find that these properties hold under 
[C2]$_{\alpha, \alpha_0}$-[C5]$_{\alpha, \alpha_0}$.
In particular, the asymptotic variance of the estimators 
coincides with that of the result for the true coordinate process, 
see Proposition \ref{prop4} below.

\textit{Proof of (1).}
Since it holds that 
\begin{equation*}
\widehat{\mathbb{U}}_{1,\mf l_1,\mf l_2}(n,\epsilon)
\lesssim 
n \widehat {\mathcal A}_{n,\mf l_1, \mf l_2}
+ \Bigl( ((n \epsilon)^2 \lor n )
\widehat {\mathcal A}_{n,\mf l_1, \mf l_2} \Bigr)^{1/2}
\Bigl( (\epsilon^2 \lor n^{-1})^{-1} \mathcal C_{n,\mf l_1, \mf l_2}\Bigr)^{1/2}
\end{equation*}
and
\begin{equation*}
\bigl((n\epsilon)^2 \lor n \bigr) \widehat {\mathcal A}_{n,\mf l_1, \mf l_2}
=\oo_\PP(1)
\end{equation*}
under [C1]$_{\alpha, \alpha_0}$ and [C2]$_{\alpha, \alpha_0}$,
we see that the estimator $\widehat \theta_0$ is consistent.

According to the proof of Theorem 3.6 in \cite{TKU2024a}, 
it is enough to prove 
\begin{equation*}
\bigl( (n \epsilon)^2 \lor n \epsilon^{-2} \bigr) 
\widehat {\mathcal A}_{n,\mf l_1, \mf l_2} = \oo_\PP(1),
\quad
\bigl( 1 \lor (n\epsilon^2)^{-1} \bigr) 
\widehat {\mathcal B}_{n,\mf l_1, \mf l_2} = \oo_\PP(1)
\end{equation*}
in order to show that the estimator $\widehat \theta_0$ have asymptotic normality. 
We see that these properties hold 
under [C2]$_{\alpha, \alpha_0}$, [C4]$_{\alpha, \alpha_0}$ 
and [C5]$_{\alpha, \alpha_0}$.

\subsubsection{Proof of Proposition \ref{prop3}}
For $\beta >0$ and $q \ge 0$, we define
\begin{align*}
\mathbf T_n(\beta, q) &=
\sum_{l_1,l_2 \ge 1}  
\frac{(1-\ee^{-\lambda_{l_1,l_2} \Delta_n})^q}
{\lambda_{l_1,l_2}\mu_{l_1,l_2}^{\beta}},
\\
\mathbf U_{M}(\beta) 
&= \sup_{\substack{|y -y'| \le 1/M_1, \\ |z -z'| \le 1/M_2}} \sum_{l_1,l_2 \ge 1} 
\frac{(e_{l_1,l_2}(y,z)-e_{l_1,l_2}(y',z'))^2}
{\lambda_{l_1,l_2} \mu_{l_1,l_2}^{\beta}},
\\
\mathbf S_n(q) 
&= \sum_{l_1,l_2 \ge 1} (1-\ee^{-\lambda_{l_1,l_2} \Delta_n})^q x_{l_1,l_2}(0)^2.
\end{align*}
Before proving Proposition \ref{prop3}, we prepare the following lemma.
\begin{lem}\label{lem00}
For $\beta >0$ and $q \ge 0$, we have
\begin{equation*}
\mathbf T_n(\beta, q) = \OO(n^{- \beta \tand q}),
\quad
\mathbf U_{M}(\beta) = \OO((M_1 \land M_2)^{-2(\beta \tand 1)})
\end{equation*}
In particular, it follows that under [A1]$_{\alpha_0}$,
\begin{equation*}
\mathbf S_n(q) = \OO(n^{- \alpha_0 \tand q}).
\end{equation*}
\end{lem}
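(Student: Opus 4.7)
The plan is to prove each of the three bounds by reducing to an elementary partial-sum estimate. The common input is the two-sided equivalence $\lambda_{l_1,l_2} \asymp \mu_{l_1,l_2} \asymp l_1^2+l_2^2$ uniformly over $l_1, l_2 \ge 1$, which follows from the explicit formulae for the eigenvalues together with $\lambda_{1,1}^* > 0$ and $\mu_0 > -2\pi^2$; combined with $\Delta_n \asymp n^{-1}$, each of the three sums becomes comparable to a model sum of the form $\sum (l_1^2+l_2^2)^{-s}$ restricted to a dyadic range in $(l_1,l_2)$.

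For (1), I split $\mathbf T_n(\beta,q)$ at the threshold $\lambda_{l_1,l_2}\Delta_n \asymp 1$, i.e.\ $l_1^2+l_2^2 \asymp n$. On the low-frequency part I apply $(1-\ee^{-x})^q \le x^q$ to pull out a factor $\Delta_n^q$, and the remaining inner sum $\sum_{l_1^2+l_2^2 \le n}(l_1^2+l_2^2)^{q-1-\beta}$ behaves like $n^{q-\beta}$, $\log n$, or $\OO(1)$ according as $q > \beta$, $q = \beta$, or $q < \beta$; this is exactly the source of the $\tand$ dichotomy. On the high-frequency part, $(1-\ee^{-x})^q$ is uniformly bounded, so the tail $\sum_{l_1^2+l_2^2 > n}(l_1^2+l_2^2)^{-1-\beta}$ contributes $\OO(n^{-\beta})$. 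Combining the two regimes yields $\OO(n^{-\beta \tand q})$. For (2), differentiating the product form of $e_{l_1,l_2}(y,z)$ and using $|y-y'| \le 1/M_1$, $|z-z'| \le 1/M_2$ gives the uniform Lipschitz bound
\[
|e_{l_1,l_2}(y,z) - e_{l_1,l_2}(y',z')|^2 \lesssim \min\!\Bigl(\tfrac{l_1^2+l_2^2}{(M_1 \land M_2)^2},\ 1\Bigr);
\]
the same splitting argument with $M_1 \land M_2$ in place of $n$ then delivers $\OO((M_1\land M_2)^{-2(\beta \tand 1)})$.

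For (3), I use the pointwise bound
\[
(1-\ee^{-\lambda_{l_1,l_2}\Delta_n})^q \le (\lambda_{l_1,l_2}\Delta_n)^q \land 1 \le (\lambda_{l_1,l_2}\Delta_n)^{q \land \alpha_0},
\]
so that $\mathbf S_n(q) \le \Delta_n^{q \land \alpha_0}\sum_{l_1,l_2\ge 1} \lambda_{l_1,l_2}^{q\land \alpha_0} x_{l_1,l_2}(0)^2$. Since $q \land \alpha_0 \le 1+\alpha_0$, I majorize $\lambda^{q\land \alpha_0} \lesssim 1 + \lambda^{1+\alpha_0}$, and the resulting series is finite because $X_0 \in L^2(D)$ yields $\sum x_{l_1,l_2}(0)^2 < \infty$ while [A1]$_{\alpha_0}$ yields $\sum \lambda_{l_1,l_2}^{1+\alpha_0} x_{l_1,l_2}(0)^2 < \infty$. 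This gives $\OO(n^{-(q \land \alpha_0)})$, which is absorbed in $\OO(n^{-\alpha_0 \tand q})$.

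None of the three bounds is essentially difficult; the only minor point of care is isolating the critical logarithmic cases in (1) when $q = \beta$ and in (2) when $\beta = 1$, where the inner sum is of harmonic type and must be handled by comparison with $\int_1^{\,\cdot} r^{-1}\, \dd r$. I do not anticipate any serious obstacle.
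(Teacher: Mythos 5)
Your argument for $\mathbf T_n(\beta,q)$ and $\mathbf U_M(\beta)$ is exactly the paper's: both use the comparison $1-\ee^{-\lambda_{l_1,l_2}\Delta_n} \asymp 1 \land \frac{l_1^2+l_2^2}{n}$ (respectively the Lipschitz-versus-boundedness bound $\min\bigl(\frac{l_1^2+l_2^2}{(M_1\land M_2)^2},1\bigr)$ on the eigenfunction increments) and then split the lattice sum at the natural threshold, with the three regimes of $\sum (l_1^2+l_2^2)^{-s}$ producing the $\tand$ dichotomy. For $\mathbf S_n(q)$ you deviate slightly: the paper uses the term-wise consequence $x_{l_1,l_2}(0)^2 \le \|A_\theta^{(1+\alpha_0)/2}X_0\|^2\,\lambda_{l_1,l_2}^{-(1+\alpha_0)}$ of [A1]$_{\alpha_0}$ to majorize $\mathbf S_n(q)$ by $\mathbf T_n(\alpha_0,q)$ and then quotes the first bound, whereas you keep the weighted series $\sum \lambda^{1+\alpha_0}x(0)^2$ intact and extract the rate from the pointwise inequality $(1-\ee^{-x})^q \le x^{q\land\alpha_0}$. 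Both are correct; your version is marginally sharper in the critical case $q=\alpha_0$ (it yields $\OO(n^{-q})$ with no logarithm), which is of course still within the stated $\OO(n^{-\alpha_0 \tand q})$. No gaps.
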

\begin{proof}
Note that for $L > 1$, 
\begin{align*}
\frac{1}{L^q}\sum_{l_1^2+l_2^2 \le L} \frac{1}{(l_1^2+l_2^2)^{1+\beta-q}} 
&= L^{-q} \times
\begin{cases}
\OO(L^{q-\beta}), & \beta < q,
\\
\OO(\log L), & \beta = q,
\\
\OO(1), & \beta > q
\end{cases}
\\
&= \OO(L^{-\beta \tand q}).
\end{align*}
Using the relation
\begin{equation*}
1-\ee^{-\lambda_{l_1,l_2} \Delta_n} \sim 1 \land \frac{l_1^2 +l_2^2}{n},
\end{equation*}
one has
\begin{align}
\mathbf T_n(\beta, q) & \sim 
\sum_{l_1^2+l_2^2 \ge n} 
\frac{1}{(l_1^2+l_2^2)^{1+\beta}} +n^{-q}\sum_{l_1^2+l_2^2 < n} 
\frac{1}{(l_1^2+l_2^2)^{1+\beta-q}}
\nonumber
\\
&= \OO(n^{-\beta}) + \OO (n^{- \beta \tand q})
\nonumber
\\
&= \OO (n^{- \beta \tand q}).
\label{eq-33-01}
\end{align}
Since it follows that
\begin{equation*}
(e_{l_1,l_2}(y,z)-e_{l_1,l_2}(y',z'))^2
\lesssim 1 \land \frac{l_1^2 +l_2^2}{(M_1 \land M_2)^2}
\end{equation*}
uniformly in $|y -y'| \le 1/M_1$ and $|z-z'| \le 1/M_2$, and 
\begin{equation*}
\ee^{-\lambda_{l_1,l_2} \Delta_n}-\ee^{-\lambda \Delta_n} \lesssim 1
\end{equation*}
uniformly in $l_1,l_2 \ge 1$ and $\lambda \in \Lambda$, we have
\begin{align}
\mathbf U_{M}(\beta) 
&\sim \frac{1}{(M_1 \land M_2)^2}
\sum_{l_1^2+l_2^2 \le (M_1 \land M_2)^2} 
\frac{1}{(l_1^2 +l_2^2)^{\beta}}
+\sum_{l_1^2+l_2^2 > (M_1 \land M_2)^2} 
\frac{1}{(l_1^2+l_2^2)^{1+\beta}}
\nonumber
\\
&= \OO ((M_1 \land M_2)^{-2(\beta \tand 1)} )
+\OO ((M_1 \land M_2)^{-2\beta} )
\nonumber
\\
&= \OO ((M_1 \land M_2)^{-2(\beta \tand 1)} ).
\label{eq-33-03}
\end{align}
Under [A1]$_{\alpha_0}$, we obtain 
\begin{align*}
\mathbf S_n(q) 
&= \sum_{l_1,l_2 \ge 1} 
\frac{(1-\ee^{-\lambda_{l_1,l_2} \Delta_n})^q}{\lambda_{l_1,l_2}^{1+\alpha_0}}
\times \lambda_{l_1,l_2}^{1+\alpha_0} x_{l_1,l_2}(0)^2
\nonumber
\\
&\lesssim 
\sum_{l_1,l_2 \ge 1} 
\frac{(1-\ee^{-\lambda_{l_1,l_2} \Delta_n})^q}
{\lambda_{l_1,l_2} \mu_{l_1,l_2}^{\alpha_0}}
= \mathbf T_n(\alpha_0,q),
\end{align*}
which together with \eqref{eq-33-01} yields the desired result.
\end{proof}

We will now begin the proof of Proposition \ref{prop3}.
Note that 
\begin{equation*}
\EE[x_{l_1,l_2}(t)] = \ee^{-\lambda_{l_1,l_2} t} x_{l_1,l_2}(0), 
\quad
\EE[x_{l_1,l_2}(t)^2] = 
\ee^{-2\lambda_{l_1,l_2}t}x_{l_1,l_2}(0)^2
+ \frac{\epsilon^2(1-\ee^{-2\lambda_{l_1,l_2}t})}
{\lambda_{l_1,l_2}\mu_{l_1,l_2}^\alpha}
\end{equation*}
and $\sup_{t \in [0,1]} \EE[ \| X_t \|^2 ] \lesssim 1$.

(1) We obtain
\begin{align*}
&\sup_{\lambda} |\widehat {\mathcal M}_{i,\lambda, \mf l_1, \mf l_2}
-\mathcal M_{i,\lambda, \mf l_1, \mf l_2}|^2
\\
&\lesssim 
\sup_{\lambda} 
\biggl(
\int_0^1 \int_0^1 (\Psi_M \mathcal N_{i,\lambda}(y,z) -\mathcal N_{i,\lambda}(y,z))
\widehat v_{\mf l_1,\mf l_2}(y,z) 
\dd y \dd z
\biggr)^2 
\\
&\qquad+
\sup_{\lambda} \biggl(
\int_0^1 \int_0^1 \mathcal N_{i,\lambda} (y,z)
(\widehat v_{\mf l_1,\mf l_2}(y,z) -v_{\mf l_1,\mf l_2}^*(y,z))
\dd y \dd z
\biggr)^2
\\
&\le 
\sup_{\lambda} \int_0^1 \int_0^1 
(\Psi_M \mathcal N_{i,\lambda} (y,z) -\mathcal N_{i,\lambda}(y,z))^2
\dd y \dd z
\int_0^1 \int_0^1\widehat v_{\mf l_1,\mf l_2}(y,z)^2 \dd y \dd z
\\
&\qquad +
\sup_{\lambda} \int_0^1 \int_0^1 \mathcal N_{i,\lambda} (y,z)^2 \dd y \dd z
\int_0^1 \int_0^1 
(\widehat v_{\mf l_1,\mf l_2}(y,z) -v_{\mf l_1,\mf l_2}^*(y,z))^2
\dd y \dd z
\\
&\lesssim 
\sup_{\lambda} \| \Psi_M \mathcal N_{i,\lambda} -\mathcal N_{i,\lambda} \|^2
+(|\widehat \kappa -\kappa^*|^2 +|\widehat \eta -\eta^*|^2) \times
\sup_{\lambda} \| \mathcal N_{i,\lambda} \|^2
\\
&=: \boldsymbol A_{1,i} 
+(|\widehat \kappa -\kappa^*|^2 +|\widehat \eta -\eta^*|^2) \boldsymbol A_{2,i}.
\end{align*}
We set
$\mathcal J_{i,\lambda} = S_{\Delta_n} X_{\widetilde t_{i-1}} 
-\ee^{-\lambda \Delta_n} X_{\widetilde t_{i-1}}$ and
$\overline X_{t,s} = \int_s^t S_{t-u} \dd W_u^{Q}$.
Since we can write 
\begin{equation*}
\mathcal N_{i,\lambda} = \mathcal J_{i,\lambda}
+\epsilon \overline X_{\widetilde t_i, \widetilde t_{i-1}},
\end{equation*}
we have
\begin{equation*}
\max_{i=1,\ldots,n} \EE [\boldsymbol A_{2,i}]
\lesssim  
\max_{i=1,\ldots,n} \EE \Bigl[ \sup_{\lambda} \| \mathcal J_{i,\lambda} \|^2 \Bigr]
+ \epsilon^2 
\max_{i=1,\ldots,n} 
\EE \bigl[ \| \overline X_{\widetilde t_i, \widetilde t_{i-1}} \|^2 \bigr] 
=: \boldsymbol A_{2}^{(1)} +\epsilon^2 \boldsymbol A_{2}^{(2)}
\end{equation*}
and
\begin{align*}
\boldsymbol A_{2}^{(1)}
&\lesssim \sup_{t \in [0,1]} \EE \bigl[\| S_{\Delta_n} X_t -X_t \|^2 \bigr]
+\sup_{t \in [0,1]} 
\EE \Bigl[ \sup_{\lambda} \| (1-\ee^{-\lambda \Delta_n}) X_t \|^2 \Bigr]
\\
&=: \boldsymbol A_{2}^{(1,1)} + \boldsymbol A_{2}^{(1,2)}.
\end{align*}
Noting that
\begin{equation*}
S_{\Delta_n} X_t -X_t = 
\sum_{l_1,l_2 \ge 1} (\ee^{-\lambda_{l_1,l_2} \Delta_n} -1) x_{l_1,l_2}(t) e_{l_1,l_2},
\end{equation*}
we see from Lemma \ref{lem00} that
\begin{align*}
\boldsymbol A_{2}^{(1,1)}
&= \sup_{t \in [0,1]} 
\sum_{l_1,l_2 \ge 1} (1-\ee^{-\lambda_{l_1,l_2} \Delta_n})^2 \EE[x_{l_1,l_2}(t)^2]
\\
&= \sup_{t \in [0,1]} \sum_{l_1,l_2 \ge 1} (1-\ee^{-\lambda_{l_1,l_2} \Delta_n})^2 
\biggl(
\ee^{-2\lambda_{l_1,l_2}t}x_{l_1,l_2}(0)^2
+ \frac{\epsilon^2(1-\ee^{-2\lambda_{l_1,l_2}t})}
{\lambda_{l_1,l_2}\mu_{l_1,l_2}^\alpha}
\biggr)
\\
&= \OO( \mathbf S_n(2,\alpha_0)) + \OO( \epsilon^2 \mathbf T_n(2,\alpha))
\\
&= \OO( n^{-(\alpha_0 \tand 2)}) +\OO(\epsilon^2 n^{-\alpha}).
\end{align*}
Furthermore, we have
$\boldsymbol A_{2}^{(1,2)} = \OO(n^{-2})$ and
\begin{align*}
\boldsymbol A_{2}^{(2)} &= 
\max_{i=1,\ldots,n} \EE \Biggl[ \biggl \| 
\int_{\widetilde t_{i-1}}^{\widetilde t_i} S_{\widetilde t_i -s} \dd W_s^{Q} 
\biggr\|^2 \Biggr] 
\\
&= \max_{i=1,\ldots,n} \sum_{l_1,l_2 \ge 1} \mu_{l_1,l_2}^{-\alpha} 
\int_{\widetilde t_{i-1}}^{\widetilde t_i} 
\ee^{-2\lambda_{l_1,l_2}(\widetilde t_i -s)} \dd s
\\
&= \sum_{l_1,l_2 \ge 1} 
\frac{1-\ee^{-2\lambda_{l_1,l_2} \Delta_n}}{2\lambda_{l_1,l_2} \mu_{l_1,l_2}^{\alpha}}
\\
&= \OO (\mathbf T_n(1,\alpha)) = \OO( n^{-(\alpha \tand 1)} ).
\end{align*}
Therefore, we obtain
\begin{align}
\max_{i=1,\ldots,n} \EE[ \boldsymbol A_{2,i}]
& \lesssim \boldsymbol A_{2}^{(1,1)} +\boldsymbol A_{2}^{(1,2)}
+\epsilon^2 \boldsymbol A_{2}^{(2)}
\nonumber
\\
&= \OO( n^{-(\alpha_0 \tand 2)}) +\OO(\epsilon^2 n^{-\alpha})
+ \OO(n^{-2}) + \OO( \epsilon^2 n^{-(\alpha \tand 1)} )
\nonumber
\\
&= \OO( n^{-(\alpha_0 \tand 2)}) +\OO( \epsilon^2 n^{-(\alpha \tand 1)} ).
\label{eq-XXXX}
\end{align}

On the other hand, we have
\begin{align*}
\max_{i=1,\ldots,n} \EE[\boldsymbol A_{1,i}] 
&\lesssim
\max_{i=1,\ldots,n} \EE \Bigl[ 
\sup_{\lambda} \| \Psi_M \mathcal J_{i,\lambda} -\mathcal J_{i,\lambda} \|^2
\Bigr]
\\
&\qquad + \epsilon^2 
\max_{i=1,\ldots,n} 
\EE \bigl[\| \Psi_M \overline X_{\widetilde t_i,\widetilde t_{i-1}} 
-\overline X_{\widetilde t_i,\widetilde t_{i-1}} \|^2 \bigr]
\\
&=: \boldsymbol A_{1}^{(1)} +\epsilon^2 \boldsymbol A_{1}^{(2)}.
\end{align*}
Note that for a continuous function $g: D \to \mathbb R$,
\begin{align*}
\| \Psi_M g -g \|^2
&\lesssim
\sum_{j=1}^{M_1} \sum_{k=1}^{M_2} \int_{z_{k-1}}^{z_k} \int_{y_{j-1}}^{y_j}
( g(y_{j-1},z_{k-1}) -g(y,z))^2 \dd y \dd z
\\
&\le \sup_{\substack{|y -y'| \le 1/M_1, \\ |z -z'| \le 1/M_2}} 
|g(y,z) -g(y',z') |^2.
\end{align*}
Since
\begin{equation*}
S_{\Delta_n} X_t -\ee^{-\lambda \Delta_n} X_t
= \sum_{l_1,l_2 \ge 1} (\ee^{-\lambda_{l_1,l_2} \Delta_n} -\ee^{-\lambda \Delta_n}) 
x_{l_1,l_2}(t) e_{l_1,l_2},
\end{equation*}
we have
\begin{align*}
\boldsymbol A_{1}^{(1)}
&\lesssim \max_{i=1,\ldots,n} \sup_{\substack{|y -y'| \le 1/M_1, \\ |z -z'| \le 1/M_2}}
\EE \bigl[(\mathcal J_{i,\lambda}(y,z) -\mathcal J_{i,\lambda}(y',z'))^2 \bigr]
\\
&\le \sup_{t \in [0,1]} \sup_{\substack{|y -y'| \le 1/M_1, \\ |z -z'| \le 1/M_2}}
\sum_{l_1,l_2 \ge 1} (\ee^{-\lambda_{l_1,l_2} \Delta_n}
-\ee^{-\lambda \Delta_n})^2 
\EE[x_{l_1,l_2}(t)^2]
\\
&\qquad \qquad \times
(e_{l_1,l_2}(y,z) -e_{l_1,l_2}(y',z'))^2
\\
&\qquad+
\sup_{t \in [0,1]} \sup_{\substack{|y -y'| \le 1/M_1, \\ |z -z'| \le 1/M_2}}
\biggl(
\sum_{l_1,l_2 \ge 1} (\ee^{-\lambda_{l_1,l_2} \Delta_n}-\ee^{-\lambda \Delta_n})
\EE[x_{l_1,l_2}(t)] 
\\
&\qquad \qquad \times
(e_{l_1,l_2}(y,z)-e_{l_1,l_2}(y',z'))
\biggr)^2
\\
&=: \boldsymbol A_{1}^{(1,1)} +\boldsymbol A_{1}^{(1,2)}.
\end{align*}
Since it follows that
\begin{align*}
\boldsymbol A_{1}^{(1,1)} &\le 
\sup_{t \in [0,1]} \sup_{\substack{|y -y'| \le 1/M_1, \\ |z -z'| \le 1/M_2}}
\sum_{l_1,l_2 \ge 1}
(\ee^{-\lambda_{l_1,l_2} \Delta_n}-\ee^{-\lambda \Delta_n})^2
\ee^{-2\lambda_{l_1,l_2}t}x_{l_1,l_2}(0)^2
\\
&\qquad \qquad \times
(e_{l_1,l_2}(y,z)-e_{l_1,l_2}(y',z'))^2
\\
&\qquad+
\epsilon^2 
\sup_{t \in [0,1]} \sup_{\substack{|y -y'| \le 1/M_1, \\ |z -z'| \le 1/M_2}}
\sum_{l_1,l_2 \ge 1}
(\ee^{-\lambda_{l_1,l_2} \Delta_n}-\ee^{-\lambda \Delta_n})^2
\frac{1-\ee^{-2\lambda_{l_1,l_2}t}}{\lambda_{l_1,l_2}\mu_{l_1,l_2}^\alpha}
\\
&\qquad \qquad \times
(e_{l_1,l_2}(y,z)-e_{l_1,l_2}(y',z'))^2
\\
&= \OO(\mathbf U_{M}(\alpha_0)) +\OO(\epsilon^2 \mathbf U_{M}(\alpha))
\\
&= \OO ((M_1 \land M_2)^{-2(\alpha_0 \tand 1)})
+\OO (\epsilon^2 (M_1 \land M_2)^{-2(\alpha \tand 1)})
\end{align*}
and
\begin{align*}
\boldsymbol A_{1}^{(1,2)} &= 
\sup_{t \in [0,1]} \sup_{\substack{|y -y'| \le 1/M_1, \\ |z -z'| \le 1/M_2}}
\Biggl(
\sum_{l_1,l_2 \ge 1} (\ee^{-\lambda_{l_1,l_2} \Delta_n}-\ee^{-\lambda \Delta_n})
\ee^{-\lambda_{l_1,l_2}t} x_{l_1,l_2}(0)
\\
&\qquad \qquad \times
(e_{l_1,l_2}(y,z)-e_{l_1,l_2}(y',z'))
\Biggr)^2
\\
&\le
\sum_{l_1,l_2 \ge 1}  
\lambda_{l_1,l_2}^{1+\alpha_0} x_{l_1,l_2}(0)^2
\\
&\qquad \times
\sup_{t \in [0,1]} \sup_{\substack{|y -y'| \le 1/M_1, \\ |z -z'| \le 1/M_2}}
\sum_{l_1,l_2 \ge 1} 
\frac{\ee^{-2\lambda_{l_1,l_2}t}
(\ee^{-\lambda_{l_1,l_2} \Delta_n}-\ee^{-\lambda \Delta_n})^2}
{\lambda_{l_1,l_2}^{1+\alpha_0}}
\\
&\qquad \qquad \times
(e_{l_1,l_2}(y,z)-e_{l_1,l_2}(y',z'))^2
\\
&=
\| A_{\theta}^{(1+\alpha_0)/2} X_0 \|^2
\\
&\qquad \times
\sup_{t \in [0,1]} 
\sup_{\substack{|y -y'| \le 1/M_1, \\ |z -z'| \le 1/M_2}} \sum_{l_1,l_2 \ge 1} 
\frac{\ee^{-2\lambda_{l_1,l_2}t}
(\ee^{-\lambda_{l_1,l_2} \Delta_n}-\ee^{-\lambda \Delta_n})^2}
{\lambda_{l_1,l_2}^{1+\alpha_0}}
\\
&\qquad \qquad \times
(e_{l_1,l_2}(y,z)-e_{l_1,l_2}(y',z'))^2
\\
&= \OO(\mathbf U_{M}(\alpha_0))
\\
&= \OO ((M_1 \land M_2)^{-2(\alpha_0 \tand 1)}),
\end{align*}
we obtain
\begin{equation*}
\boldsymbol A_{1}^{(1)}
\lesssim \boldsymbol A_{1}^{(1,1)} +\boldsymbol A_{1}^{(1,2)}
= \OO ((M_1 \land M_2)^{-2(\alpha_0 \tand 1)} )
+\OO ( \epsilon^2(M_1 \land M_2)^{-2(\alpha \tand 1)} ).
\end{equation*}
We also have
\begin{align*}
\boldsymbol A_{1}^{(2)}
&\lesssim 
\max_{i=1,\ldots,n} \sup_{\substack{|y -y'| \le 1/M_1, \\ |z -z'| \le 1/M_2}}
\EE \bigl[( \overline X_{\widetilde t_i,\widetilde t_{i-1}}(y,z)
-\overline X_{\widetilde t_i,\widetilde t_{i-1}}(y',z'))^2 \bigr]
\\
&= 
\max_{i=1,\ldots,n} \sup_{\substack{|y -y'| \le 1/M_1, \\ |z -z'| \le 1/M_2}}
\EE \Biggl[ \biggl(
\sum_{l_1,l_2 \ge 1} \mu_{l_1,l_2}^{-\alpha/2}
(e_{l_1,l_2}(y,z) -e_{l_1,l_2}(y',z')) 
\\
&\qquad \qquad \times 
\int_{\widetilde t_{i-1}}^{\widetilde t_i} 
\ee^{-\lambda_{l_1,l_2}(\widetilde t_i-s)} \dd w_{l_1,l_2}(s)
\biggr)^2 \Biggr]
\\
&= \max_{i=1,\ldots,n} \sup_{\substack{|y -y'| \le 1/M_1, \\ |z -z'| \le 1/M_2}}
\sum_{l_1,l_2 \ge 1} \mu_{l_1,l_2}^{-\alpha}
(e_{l_1,l_2}(y,z) -e_{l_1,l_2}(y',z'))^2 
\\
&\qquad \qquad \times 
\int_t^{t+\Delta_n} \ee^{-2\lambda_{l_1,l_2}(\widetilde t_i-s)} \dd s
\\
&= \sup_{\substack{|y -y'| \le 1/M_1, \\ |z -z'| \le 1/M_2}}
\sum_{l_1,l_2 \ge 1} 
\frac{1-\ee^{-2\lambda_{l_1,l_2} \Delta_n}}{\lambda_{l_1,l_2} \mu_{l_1,l_2}^\alpha}
(e_{l_1,l_2}(y,z) -e_{l_1,l_2}(y',z'))^2 
\\
&= \OO(\mathbf U_{M}(\alpha))
\\
&= \OO ((M_1 \land M_2)^{-2(\alpha \tand 1)}).
\end{align*}
Therefore, we obtain
\begin{equation*}
\max_{i=1,\ldots,n} \EE[\boldsymbol A_{1,i}]
\lesssim \boldsymbol A_{1}^{(1)} +\epsilon^2 \boldsymbol A_{1}^{(2)}
=
\OO \biggl(\frac{1}{(M_1 \land M_2)^{2(\alpha_0 \tand 1)}} \biggr)
+\OO \biggl( \frac{\epsilon^2}{(M_1 \land M_2)^{2(\alpha \tand 1)}} \biggr),
\end{equation*}
which together with \eqref{eq-XXXX} and Theorem \ref{th1} yields 
\begin{align*}
r_{n,\epsilon} \widehat{\mathcal A}_{n,\mf l_1,\mf l_2}
&\lesssim
r_{n,\epsilon} \sum_{i=1}^n \boldsymbol A_{1,i} 
+(|\widehat \kappa -\kappa^*|^2 +|\widehat \eta -\eta^*|^2) 
r_{n,\epsilon} \sum_{i=1}^n \boldsymbol A_{2,i}
\\
&=
\OO_\PP \biggl(\frac{n r_{n,\epsilon} }{(M_1 \land M_2)^{2(\alpha_0 \tand 1)}} \biggr)
+\OO_\PP \biggl(
\frac{n \epsilon^2 r_{n,\epsilon}}{(M_1 \land M_2)^{2(\alpha \tand 1)}} \biggr)
\\
&\qquad +\OO_\PP \biggl( \frac{n^{1-\alpha_0 \tand 2} r_{n,\epsilon}}
{\mathcal R_{\alpha,\alpha_0}} \biggr) 
+\OO_\PP \biggl( \frac{n^{1- \alpha \tand 1} \epsilon^2 r_{n,\epsilon}}
{\mathcal R_{\alpha,\alpha_0}} \biggr) 
\\
&=\oo_\PP(1).
\end{align*}

(2) Noting that
\begin{align*}
\widehat x_{\mf l_1,\mf l_2}(t) &= 
\int_0^1 \int_0^1 \Psi_M X_t(y,z) \widehat v_{\mf l_1,\mf l_2}(y,z) \dd y \dd z,
\\
x_{\mf l_1,\mf l_2}(t) &= 
\int_0^1 \int_0^1 X_t(y,z) v_{\mf l_1,\mf l_2}^*(y,z) \dd y \dd z,
\end{align*}
we have
\begin{align*}
&|\widehat x_{\mf l_1,\mf l_2}(t) -x_{\mf l_1,\mf l_2}(t)|^2
\\
&\le \int_0^1 \int_0^1 (\Psi_M X_t(y,z) -X_t(y,z))^2 \dd y \dd z
\int_0^1 \int_0^1 \widehat v_{\mf l_1,\mf l_2}(y,z)^2 \dd y \dd z
\\
&\quad+ \int_0^1 \int_0^1 X_t(y,z)^2 \dd y \dd z
\int_0^1 \int_0^1 (\widehat v_{\mf l_1,\mf l_2}(y,z)
-v_{\mf l_1,\mf l_2}^*(y,z))^2 \dd y \dd z
\\
&\lesssim
\| \Psi_M X_t -X_t \|^2 
+(|\widehat \kappa -\kappa^*|^2 +|\widehat \eta -\eta^*|^2) \times \| X_t \|^2.
\end{align*}
Since
\begin{align*}
&\EE[(X_t(y,z) -X_t(y',z'))^2] 
\\
&= 
\sum_{l_1,l_2 \ge 1} \sum_{l_1',l_2' \ge 1}
\EE[x_{l_1,l_2}(t) x_{l_1',l_2'}(t)]
\\
&\qquad \times
(e_{l_1,l_2}(y,z)-e_{l_1,l_2}(y',z'))
(e_{l_1',l_2'}(y,z)-e_{l_1',l_2'}(y',z'))
\\
&\le
\sum_{l_1,l_2 \ge 1}
\EE[x_{l_1,l_2}(t)^2]
(e_{l_1,l_2}(y,z)-e_{l_1,l_2}(y',z'))^2
\\
&\qquad+
\biggl(
\sum_{l_1,l_2 \ge 1}
\EE[x_{l_1,l_2}(t)] (e_{l_1,l_2}(y,z)-e_{l_1,l_2}(y',z'))
\biggr)^2
\\
&=: \boldsymbol B_1 +\boldsymbol B_2,
\end{align*}
\begin{align*}
\boldsymbol B_1 
&=
\sum_{l_1,l_2 \ge 1}
\ee^{-2\lambda_{l_1,l_2}t}x_{l_1,l_2}(0)^2
(e_{l_1,l_2}(y_{j-1},z_{k-1})-e_{l_1,l_2}(y,z))^2
\\
&\qquad +\epsilon^2
\sum_{l_1,l_2 \ge 1}
\frac{1-\ee^{-2\lambda_{l_1,l_2}t}}{\lambda_{l_1,l_2}\mu_{l_1,l_2}^\alpha}
(e_{l_1,l_2}(y_{j-1},z_{k-1})-e_{l_1,l_2}(y,z))^2
\\
&=\OO(\mathbf U_{M}(\alpha_0)) +\OO(\epsilon^2 \mathbf U_{M}(\alpha))
\end{align*}
and
\begin{align*}
\boldsymbol B_2 &= \Biggl( \sum_{l_1,l_2 \ge 1}
\EE[x_{l_1,l_2}(t)] (e_{l_1,l_2}(y_{j-1},z_{k-1})-e_{l_1,l_2}(y,z))
\Biggr)^2
\\
&= \Biggl(
\sum_{l_1,l_2 \ge 1} \ee^{-\lambda_{l_1,l_2}t} x_{l_1,l_2}(0)
(e_{l_1,l_2}(y_{j-1},z_{k-1})-e_{l_1,l_2}(y,z))
\Biggr)^2
\\
&\le
\sum_{l_1,l_2 \ge 1}  
\lambda_{l_1,l_2}^{1+\alpha_0} x_{l_1,l_2}(0)^2
\sum_{l_1,l_2 \ge 1} \frac{\ee^{-2\lambda_{l_1,l_2}t}}{\lambda_{l_1,l_2}^{1+\alpha_0}}
(e_{l_1,l_2}(y_{j-1},z_{k-1})-e_{l_1,l_2}(y,z))^2
\\
&=
\| A_{\theta}^{(1+\alpha_0)/2} X_0 \|^2 \times
\sum_{l_1,l_2 \ge 1} \frac{\ee^{-2\lambda_{l_1,l_2}t}}{\lambda_{l_1,l_2}^{1+\alpha_0}}
(e_{l_1,l_2}(y_{j-1},z_{k-1})-e_{l_1,l_2}(y,z))^2
\\
&= \OO ((M_1 \land M_2)^{-2(\alpha_0 \tand 1)}),
\end{align*}
it holds that
\begin{equation*}
\sup_{t \in [0,1]} \EE \bigl[\| \Psi_M X_t -X_t \|^2 \bigr]
= \OO ((M_1 \land M_2)^{-2(\alpha_0 \tand 1)} )
+\OO (\epsilon^2(M_1 \land M_2)^{-2(\alpha \tand 1)} )
\end{equation*}
and
\begin{align*}
s_{n,\epsilon} \widehat{\mathcal B}_{n,\mf l_1,\mf l_2} &=
\OO_\PP \biggl(\frac{n s_{n,\epsilon}}{(M_1 \land M_2)^{2(\alpha_0 \tand 1)}} \biggr) 
+\OO_\PP \biggl(
\frac{n \epsilon^2 s_{n,\epsilon}}{(M_1 \land M_2)^{2(\alpha \tand 1)}} \biggr)
+\OO_\PP \Bigl( \frac{n s_{n,\epsilon}}{\mathcal R_{\alpha,\alpha_0}^2} \Bigr) 
\\
&=\oo_\PP(1).
\end{align*}

\subsubsection{Estimation for an OU process with a small dispersion parameter}
This subsubsection is devoted to parametric estimation for 
the Ornstein-Uhlenbeck process
\begin{equation}
\dd x (t) = -\lambda x(t) \dd t +\epsilon \mu^{-\alpha/2} \dd w(t),
\end{equation}
where $(\lambda, \mu) \in \Xi$ are unknown parameters, 
the parameter space $\Xi$ is a compact convex subset of $\mathbb R \times (0,\infty)$,
$(\lambda^*, \mu^*) \in \mathrm{Int}\,(\Xi)$ are the true values of $(\lambda, \mu)$,
and $\epsilon \in (0,1)$, $x(0) \neq 0$ and $\alpha \in (0,3)$ are known constants.
$\{ w(t) \}_{t \ge 0}$ is a one-dimensional standard Brownian motion.
For details of parametric estimation of the Ornstein-Uhlenbeck process, 
see the appendix section in \cite{TKU2024a}.

Suppose that we have the discrete observations 
${\textbf x} = \{ x(i h_n) \}_{i=0}^{n}$ with $h_n = 1/n$.
We construct the contrast function as follows.
\begin{equation}\label{CF}
\mathcal V_{n}^{\epsilon} (\lambda,\mu:{\textbf x})
=\sum_{i=1}^{n}
\frac{(x(i h_n)- \ee^{-\lambda h_n x(i h_n))^2}}
{\frac{\epsilon^2(1-\ee^{-2\lambda h_n})}{2\lambda \mu^\alpha}}
+n \log \frac{1-\ee^{-2\lambda h_n}}{2\lambda \mu^\alpha h_n}.
\end{equation}
If $\mu$ is known, then we set $\mu = \mu^*$ and define
\begin{equation*}
\widehat \lambda = \underset{\lambda}{\mathrm{arginf}\,} 
\mathcal V_{n}^{\epsilon} (\lambda,\mu^*:{\textbf x})
\end{equation*}
as the estimator of $\lambda$, or if $\mu$ is unknown, then we set
\begin{equation*}
(\widehat \lambda, \widehat \mu) = \underset{\lambda, \mu}{\mathrm{arginf}\,} 
\mathcal V_{n}^{\epsilon} (\lambda,\mu:{\textbf x}).
\end{equation*}
as the estimators of $\lambda$ and $\mu$.

\begin{prop}[Theorem A.2 in \cite{TKU2024a}]\label{prop4}
If $\mu$ is known,
then as $n \to \infty$ and $\epsilon \to 0$, 
\begin{equation*}
\epsilon^{-1} (\widehat \lambda -\lambda^*) 
\dto N (0, G(\lambda^*, \mu^*, x(0))^{-1}).
\end{equation*}
If $\mu$ is unknown, then as $n \to \infty$ and $\epsilon \to 0$, 
\begin{equation*}
\begin{pmatrix}
\epsilon^{-1} (\widehat \lambda -\lambda^*)
\\
\sqrt{n} (\widehat \mu -\mu^*)
\end{pmatrix}
\dto N (0, I(\lambda^*, \mu^*, x(0))^{-1}).
\end{equation*}
\end{prop}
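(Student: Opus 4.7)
The plan is the standard M-estimation route: consistency of $(\widehat\lambda, \widehat\mu)$ followed by asymptotic normality via a Taylor expansion of the score. Because the Ornstein--Uhlenbeck transition is exactly Gaussian, $\mathcal V_n^\epsilon$ coincides with the true negative log-likelihood up to constants, and the one-step residuals
\begin{equation*}
\mathcal M_i(\lambda^*) = x(ih_n) - \mathrm e^{-\lambda^* h_n} x((i-1)h_n) = \epsilon (\mu^*)^{-\alpha/2} \int_{(i-1)h_n}^{ih_n} \mathrm e^{-\lambda^*(ih_n-s)} \dd w(s)
\end{equation*}
are i.i.d.\ centered Gaussians with variance $\sigma_n^{*2} = \epsilon^2 (1-\mathrm e^{-2\lambda^* h_n})/(2\lambda^* \mu^{*\alpha}) \sim \epsilon^2 h_n/\mu^{*\alpha}$. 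This explicit structure powers every subsequent estimate.

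For consistency, I would verify that $\epsilon^2 \mathcal V_n^\epsilon(\lambda,\mu)$ converges uniformly on $\Xi$ to a non-negative limit whose unique minimizer in $\lambda$ is $\lambda^*$, using the non-degeneracy of the deterministic path $x^{(0)}(t) := \mathrm e^{-\lambda^* t} x(0)$ guaranteed by $x(0) \neq 0$, while $n^{-1} \mathcal V_n^\epsilon(\lambda,\mu)$ converges uniformly to a limit minimized at $\mu = \mu^*$. A standard argmin argument on the compact $\Xi$ then yields $(\widehat\lambda, \widehat\mu) \pto (\lambda^*, \mu^*)$.

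For asymptotic normality I write the scaled Taylor identity
\begin{equation*}
-\mathcal L_n^\epsilon(\lambda^*, \mu^*) = \biggl[\int_0^1 \mathcal K_n^\epsilon\bigl((\lambda^*,\mu^*) + u((\widehat\lambda, \widehat\mu) - (\lambda^*,\mu^*))\bigr) \dd u \biggr] \begin{pmatrix} \epsilon^{-1}(\widehat\lambda - \lambda^*) \\ \sqrt n(\widehat\mu - \mu^*) \end{pmatrix}
\end{equation*}
and reduce the task to (i) $\mathcal L_n^\epsilon(\lambda^*,\mu^*) \dto N(0, I(\lambda^*,\mu^*,x(0)))$, (ii) $\mathcal K_n^\epsilon(\lambda^*,\mu^*) \pto I(\lambda^*,\mu^*,x(0))$, and (iii) uniform continuity of $\mathcal K_n^\epsilon$ in a shrinking neighborhood. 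The $\lambda$-component of $-\epsilon \partial_\lambda \mathcal V_n^\epsilon(\lambda^*,\mu^*)$ is, to leading order, $\epsilon \sum_i h_n x^{(0)}((i-1)h_n)\mathcal M_i(\lambda^*)/\sigma_n^{*2}$, a sum of martingale differences whose variance equals $\mu^{*\alpha} h_n \sum_i (x^{(0)}((i-1)h_n))^2 \to \mu^{*\alpha} \int_0^1 (x^{(0)}(t))^2 \dd t = G(\lambda^*,\mu^*,x(0))$ by Riemann summation, giving the CLT via Lindeberg. The $\mu$-component of $-n^{-1/2}\partial_\mu \mathcal V_n^\epsilon(\lambda^*,\mu^*)$ reduces to $n^{-1/2}\sum_i (\alpha/(2\mu^*))(\zeta_i^2 - 1)$ with $\zeta_i := \mathcal M_i(\lambda^*)/\sigma_n^*$ i.i.d.\ $N(0,1)$, yielding asymptotic variance $\alpha^2/(2\mu^{*2}) = H(\mu^*)$. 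The off-diagonal block vanishes because $\sum_i x^{(0)}((i-1)h_n)\zeta_i$ and $\sum_i(\zeta_i^2 - 1)$ are asymptotically uncorrelated (odd vs.\ even moments of a standard Gaussian). Item (ii) follows from the same calculations with laws of large numbers in place of CLT, and (iii) is immediate from the explicit analytic form of $\mathcal K_n^\epsilon$.

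The main obstacle is the scale separation: within $\mathcal L_n^\epsilon$ and $\mathcal K_n^\epsilon$ one must isolate the deterministic $O(1)$ contribution driven by $x^{(0)}$ from the $O(\epsilon)$ stochastic perturbation, so that the $\epsilon^{-1}$ and $\sqrt n$ rates emerge independently and realise the block-diagonal form of $I(\lambda^*,\mu^*,x(0))$. The assumption $x(0)\neq 0$ is critical here since it forces $G(\lambda^*,\mu^*,x(0))>0$; without it, the $\lambda$-component would admit no $\epsilon^{-1}$-rate limit and the variance structure above would degenerate. Fortunately, the exact Gaussian transitions of the OU process remove any joint-rate constraint between $n$ and $\epsilon$, so no additional balance condition is needed.
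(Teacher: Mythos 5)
Your outline is correct in substance, but note that the paper itself contains no proof of this proposition: it is imported verbatim as Theorem A.2 of \cite{TKU2024a}, with the reader directed to the appendix of that reference, so there is no in-paper argument to compare against. What you describe --- exact Gaussian one-step transitions, so that $\mathcal V_n^\epsilon$ is the true negative log-likelihood, consistency from the two scalings $\epsilon^2\mathcal V_n^\epsilon$ and $n^{-1}\mathcal V_n^\epsilon$, and asymptotic normality from the Taylor-expanded score with the block-diagonal limit $I(\lambda^*,\mu^*,x(0))$ --- is precisely the standard route that the cited appendix follows, including the observation that exact Gaussianity removes any balance condition between $n$ and $\epsilon$. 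The only blemishes are bookkeeping: the variance of the normalized $\mu$-score is $2\alpha^2/\mu^{*2}=4H(\mu^*)$ while the normalized observed information tends to $2H(\mu^*)$ (so the estimator's limit variance is $H(\mu^*)^{-1}$ as claimed), and the analogous factors of $2$ appear in the $\lambda$-block; your phrasing conflates the score variance with $H$ itself, but the final covariance $I(\lambda^*,\mu^*,x(0))^{-1}$ comes out correctly.
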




\end{document}